\theoremstyle{plain}
\newtheorem{theorem}{Theorem}[section]
\newtheorem{lemma}[theorem]{Lemma}
\newtheorem{corollary}[theorem]{Corollary}
\newtheorem{proposition}[theorem]{Proposition}
\newtheorem*{theorem*}{Theorem}
\newtheorem*{lemma*}{Lemma}
\newtheorem*{corollary*}{Corollary}
\newtheorem*{proposition*}{Proposition}
\theoremstyle{definition}
\newtheorem{definition}[theorem]{Definition}
\newtheorem*{definition*}{Definition}
\theoremstyle{remark}
\newtheorem{remark}[theorem]{Remark}
\theoremstyle{plain}
\newtheorem{introtheorem}{Theorem}
\newtheorem{introlemma}[introtheorem]{Lemma}
\newtheorem{introcorollary}[introtheorem]{Corollary}
\DeclarePairedDelimiter\ceil{\lceil}{\rceil}
\DeclarePairedDelimiter\floor{\lfloor}{\rfloor}
\DeclareFontFamily{T1}{cbgreek}{}
\DeclareFontShape{T1}{cbgreek}{m}{n}{<-6>  grmn0500 <6-7> grmn0600 <7-8> grmn0700 <8-9> grmn0800 <9-10> grmn0900 <10-12> grmn1000 <12-17> grmn1200 <17-> grmn1728}{}
\DeclareSymbolFont{quadratics}{T1}{cbgreek}{m}{n}
\DeclareMathSymbol{\qoppa}{\mathord}{quadratics}{19}
\DeclareMathSymbol{\Qoppa}{\mathord}{quadratics}{21}
\title{On the geometric fixed points of the real topological cyclic
  homology of $\mathbb{Z}/4$}
\author{Thomas Read\\\texttt{thjread@gmail.com}}
\date{}
\begin{document}

\DeclarePairedDelimiter\abs{\lvert}{\rvert}%
\DeclarePairedDelimiter\norm{\lVert}{\rVert}%

\makeatletter
\let\oldabs\abs
\renewcommand\abs{\@ifstar{\oldabs}{\oldabs*}}
\let\oldnorm\norm
\renewcommand\norm{\@ifstar{\oldnorm}{\oldnorm*}}
\makeatother

\maketitle

\section*{Abstract}

We study the homotopy groups of the geometric fixed points of the real topological cyclic homology of
$\mathbb{Z}/4$. %
We relate these groups to the values of the non-abelian derived functors of
the functor $M \mapsto (M \otimes_{\mathbb{Z}/4} M)^{C_2}$ at the
$\mathbb{Z}/4$-module $\mathbb{Z}/2$, %
which we precisely calculate with computer assistance up to degree $6$,
and calculate in general up to slight remaining ambiguity. Using these results
we compute $\pi_i(\mathrm{TCR}(\mathbb{Z}/4)^{\phi \mathbb{Z}/2})$ exactly
for $i \le 1$, up to an extension problem for $2
\le i \le 5$, and describe the asymptotic growth of this group for large $i$.
A consequence of these computations is that there exists some $0 \le i \le 5$ such
that the canonical map comparing the
genuine symmetric and symmetric $L$-theory spectra of $\mathbb{Z}/4$ is not an
isomorphism on degree $i$ homotopy,
and moreover this comparison map is never an isomorphism on homotopy in sufficiently large degrees.

\tableofcontents

\section*{Introduction}
\addcontentsline{toc}{section}{Introduction}

Trace methods, introduced in \cite{bokstedt_cyclotomic_1993}, study the $K$-theory
spectrum of a ring spectrum $A$ by mapping
it to other invariants that are more tractable to compute. The most basic such
invariant is the topological Hochschild homology spectrum $\mathrm{THH}(A)$,
from which one can obtain further invariants such as the $p$-typical topological
cyclic homology spectrum $\mathrm{TC}(A; p)$. If $A$ is a
ring spectrum with anti-involution, $\mathrm{THH}(A)$ refines to a real cyclotomic spectrum
$\mathrm{THR}(A)$, the real topological Hochschild homology (introduced in \cite{hesselholt_real_2015}, \cite{dotto_stable_2012} and \cite{hogenhaven_real_2016}). This receives a
trace map from the real algebraic $K$-theory $\mathrm{KR}(A)$ of \cite{hesselholt_real_2015}.
From $\mathrm{THR}(A)$ one can obtain further approximations to real $K$-theory,
in particular the $p$-typical real topological cyclic
homology, a genuine $\mathbb{Z}/2$-spectrum denoted $\mathrm{TCR}(A; p)$ (defined in
\cite{hogenhaven_real_2016}). While calculations of these objects exist for
various special classes of rings, there remain very simple rings for which
calculation is rather hard. For example the celebrated recent work of Antieau, Krause and
Nikolaus \cite{antieau_k-theory_2024} gives an algorithm for computing
the topological cyclic homology $\mathrm{TC}(\mathbb{Z}/p^n; p)$; before this even $\mathrm{TC}(\mathbb{Z}/4; 2)$
was not well understood. In this paper we will study the geometric fixed points
of the real topological cyclic homology $\mathrm{TCR}(\mathbb{Z}/4; 2)$.

Dotto, Moi and Patchkoria analyse the dihedral
structure of $\mathrm{THR}(A)$ in \cite{dotto_geometric_2024}, and use this to obtain a
formula for the geometric fixed points of $\mathrm{TCR}(A; p)$.
In particular when
$p=2$ they show that for $A$ a connective ring spectrum with anti-involution,
the spectrum $\mathrm{TCR}(A; 2)^{\phi \mathbb{Z}/2}$ can be expressed as a
homotopy equaliser of spectra
\begin{equation}\label{eq:htpy_eq}\mathrm{TCR}(A; 2)^{\phi \mathbb{Z}/2} \simeq \mathrm{hoeq}\left( (\mathrm{THR}(A)^{\phi \mathbb{Z}/2})^{C_2} \, \substack{\xrightarrow{f}\\[-0.2em] \xrightarrow[r]{}} \, \mathrm{THR}(A)^{\phi
  \mathbb{Z}/2} \right) \end{equation}
where $C_2$ denotes the Weyl group of $\mathbb{Z}/2$ in $O(2)$.
We can consider $A^{\phi \mathbb{Z}/2}$ as an $A$-module spectrum\footnote{Strictly
speaking, we should say a module over the underlying ring spectrum of the ring spectrum with
anti-involution $A$.} via the Frobenius
module structure
\[A \xrightarrow{\Delta} (N^{\mathbb{Z}/2}_{\{e\}}(A))^{\phi
  \mathbb{Z}/2} \xrightarrow{\mu^{\phi \mathbb{Z}/2}} A^{\phi \mathbb{Z}/2}\]
(where $N^{\mathbb{Z}/2}_{\{e\}}({-})$ is the Hill-Hopkins-Ravenel norm of
\cite{hill_nonexistence_2016-1}, see Section~\ref{sec:pointset} for more
details).
Then there is an equivalence of $C_2$-spectra
\[\mathrm{THR}(A)^{\phi \mathbb{Z}/2} \simeq A \otimes_{N_{\{e\}}^{C_2}A}
  N^{C_2}_{\{e\}}(A^{\phi \mathbb{Z}/2}) \text{,}\]
(where $A$ is a $N_{\{e\}}^{C_2}A$-spectrum via the multiplication map $\mu :
N_{\{e\}}^{C_2}A \to A$ and $N^{C_2}_{\{e\}}(A^{\phi \mathbb{Z}/2})$ is a
$N_{\{e\}}^{C_2}A$-spectrum via applying the norm to the Frobenius module
structure on $A^{\phi \mathbb{Z}/2}$).
If
$A^{\phi \mathbb{Z}/2}$ splits as an $A$-module into the direct sum of its
homotopy groups
\[A^{\phi \mathbb{Z}/2} \simeq \bigoplus_{n \ge 0} \Sigma^nH(\pi_n A^{\phi \mathbb{Z}/2})\]
then we obtain corresponding decompositions of the terms
$(\mathrm{THR}(A)^{\phi \mathbb{Z}/2})^{C_2}$ and $\mathrm{THR}(A)^{\phi
  \mathbb{Z}/2}$ appearing in the equaliser (\ref{eq:htpy_eq}), and \cite{dotto_geometric_2024} identifies the maps $f$ and
$r$ under these decompositions.
This approach is a key ingredient in the identification of the homotopy type of
$\mathrm{TCR}(\mathbb{Z}; 2)^{\phi \mathbb{Z}/2}$ (\cite{dotto_geometric_2024})
and the computation of the homotopy groups of $\mathrm{TCR}(k; 2)^{\phi
  \mathbb{Z}/2}$ for $k$ a field (see \cite{dotto_geometric_2024} for fields of odd
characteristic and perfect fields of characteristic $2$, extended by Dotto in
\cite{dotto_analogue_2025} to all fields).

The purpose of the present paper is to apply this decomposition to study $\mathrm{TCR}(\mathbb{Z}/4; 2)^{\phi
  \mathbb{Z}/2}$. From this point onwards we will write $\mathrm{TCR}(A)$ for
$\mathrm{TCR}(A; 2)$, leaving the prime $p = 2$ implicit. In Section~\ref{sec:decomposition} we show that
$(H\underline{\mathbb{Z}/4})^{\phi \mathbb{Z}/2}$ (with the Frobenius module
structure, and where the underline indicates we are considering $\mathbb{Z}/4$ as
having trivial involution) splits as an
$H\underline{\mathbb{Z}/4}$-module into the direct sum of its homotopy groups,
and so following the above approach
we obtain a decomposition of the equaliser diagram (\ref{eq:htpy_eq}) for $A =
H\underline{\mathbb{Z}/4}$. The homotopy groups of
$\mathrm{THR}(\mathbb{Z}/4)^{\phi \mathbb{Z}/2}$ are immediate from the
decomposition, while to calculate the homotopy groups of
$(\mathrm{THR}(\mathbb{Z}/4)^{\phi \mathbb{Z}/2})^{C_2}$ we need to analyse
summands of the form
\[\left( H \underline{\mathbb{Z}/4} \otimes_{N_{\{e\}}^{C_2}(H\mathbb{Z}/4)}
    N^{C_2}_{\{e\}}(\Sigma^n H\mathbb{Z}/2) \right)^{C_2}\]
for $n \in \mathbb{N}$. In Section~\ref{sec:computation_of_homotopy} we show that the homotopy groups of these summands can
be expressed using non-abelian derived functors.
Non-abelian
derived functors were introduced by Dold and Puppe in
\cite{dold_non-additive_1958} and developed in \cite{dold_homologie_1961}. Just
as derived functors are computed using free resolutions in chain complexes,
non-abelian derived functors are computed using free resolutions in simplicial modules; we
later recall this in Definition~\ref{def:non-ab_derived}. The key result is
the following (proved in Corollary~\ref{cor:F_non_ab}):

\begin{introlemma} \label{lem:intro_nonab}
  Let $F_{R} : \mathrm{Mod}_{R} \to
\mathrm{Ab}$ defined by $F_{R}(M) = (M \otimes_{R} M)^{C_2}$.
  Then
\[\pi_i\left(\left( H \underline{\mathbb{Z}/4} \otimes_{N_{\{e\}}^{C_2}(H\mathbb{Z}/4)}
    N^{C_2}_{\{e\}}(\Sigma^n H\mathbb{Z}/2) \right)^{C_2}\right) \cong L_i^{(n)}F_{\mathbb{Z}/4}(\mathbb{Z}/2)\]
where $L_i^{(n)}F_{R} : \mathrm{Mod}_{R} \to \text{Ab}$ is the $i$th non-abelian derived functor of type $n$
of $F_{R}$.
\end{introlemma}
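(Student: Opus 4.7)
The plan is to model $\Sigma^n H\mathbb{Z}/2$ by a free simplicial resolution in $\mathbb{Z}/4$-modules and then to commute the resulting geometric realisation past the operations $N^{C_2}_{\{e\}}(-)$, the relative tensor product with $H\underline{\mathbb{Z}/4}$, and the categorical $C_2$-fixed points, reducing the whole calculation to a levelwise computation on free $\mathbb{Z}/4$-modules. Concretely I fix a simplicial free $\mathbb{Z}/4$-module $L_\bullet$ of type $n$, namely one with $\pi_n L_\bullet = \mathbb{Z}/2$ and $\pi_i L_\bullet = 0$ otherwise, so that by the forthcoming Definition~\ref{def:non-ab_derived} one has $L_i^{(n)} F_{\mathbb{Z}/4}(\mathbb{Z}/2) = \pi_i F_{\mathbb{Z}/4}(L_\bullet)$; by Dold--Kan the associated simplicial Eilenberg--MacLane spectrum $HL_\bullet$ geometrically realises to $\Sigma^n H\mathbb{Z}/2$ as an $H\mathbb{Z}/4$-module.

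The computational heart of the proof is the free-module identification
\[\Big(H\underline{\mathbb{Z}/4} \otimes_{N^{C_2}_{\{e\}}H\mathbb{Z}/4} N^{C_2}_{\{e\}}(HQ)\Big)^{C_2} \simeq HF_{\mathbb{Z}/4}(Q)\]
as an Eilenberg--MacLane spectrum concentrated in degree $0$, for every free $\mathbb{Z}/4$-module $Q$. In the rank-one case $Q = \mathbb{Z}/4$ the tensor product collapses to $H\underline{\mathbb{Z}/4}$, and the claim reduces to the standard fact that $\pi^{C_2}_*(H\underline{\mathbb{Z}/4}) = \mathbb{Z}/4$ in degree $0$ and vanishes in positive degrees, matching $F_{\mathbb{Z}/4}(\mathbb{Z}/4) = \mathbb{Z}/4$. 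For general $Q = \mathbb{Z}/4^m$ I would decompose $N^{C_2}_{\{e\}}(H\mathbb{Z}/4^m)$ via the multiplicative behaviour of the norm on direct sums into $m$ diagonal copies of $N^{C_2}_{\{e\}}(H\mathbb{Z}/4)$ and $\binom{m}{2}$ induced summands of the form $\mathrm{Ind}^{C_2}_{\{e\}}(H\mathbb{Z}/4 \wedge H\mathbb{Z}/4)$. After base change along the multiplication $\mu : N^{C_2}_{\{e\}}H\mathbb{Z}/4 \to H\underline{\mathbb{Z}/4}$ and passage to $C_2$-fixed points, using the Wirthmüller isomorphism on the induced summands, one obtains a direct sum of $\binom{m+1}{2}$ copies of $H\mathbb{Z}/4$, which matches $F_{\mathbb{Z}/4}(\mathbb{Z}/4^m)$.

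Granted the free-module case, the remaining step is to commute this identification through the simplicial realisation. The norm preserves realisations of connective simplicial spectra (since smash commutes with realisation in each variable), the relative tensor product is colimit-preserving in each variable, and categorical $C_2$-fixed points commute with the realisation of a simplicial $C_2$-spectrum whose levelwise fixed points are concentrated in a single degree (via the convergent spectral sequence associated to the simplicial filtration). Putting these ingredients together yields an equivalence whose $\pi_i$ equals $\pi_i F_{\mathbb{Z}/4}(L_\bullet)$, which is $L_i^{(n)} F_{\mathbb{Z}/4}(\mathbb{Z}/2)$ by definition.

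The main obstacle I anticipate is precisely the free-module identification, and specifically the verification that neither $(H\underline{\mathbb{Z}/4})^{C_2}$ nor the fixed points of the induced summands $\mathrm{Ind}^{C_2}_{\{e\}}(H\mathbb{Z}/4 \wedge H\mathbb{Z}/4)$ carry any higher homotopy. This requires a careful unpacking of the Frobenius module structure and ruling out any spurious Tate contribution arising from the genuine $C_2$-fixed points of a non Eilenberg--MacLane $C_2$-spectrum; once this is settled the remaining simplicial and spectral-sequence arguments are formal.
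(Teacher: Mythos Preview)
Your approach is essentially the same as the paper's: choose a free simplicial resolution, commute the norm and the relative tensor product past the realisation, and identify each level via the decomposition of $N^{C_2}_{\{e\}}(H(\mathbb{Z}/4)^m)$ into diagonal norm summands and off-diagonal induced summands. The paper packages the levelwise identification slightly more cleanly by staying equivariant: rather than taking $C_2$-fixed points level by level and then worrying about higher homotopy, it identifies the whole $C_2$-spectrum in each simplicial degree as the Eilenberg--MacLane $C_2$-spectrum $H(M_i\otimes_R M_i,\,w)$ with the swap involution, and only then passes to fixed points using $(H(A,w))^{C_2}\simeq H(A^{C_2})$. This sidesteps precisely the issue you flag at the end.

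Your anticipated obstacle is in fact not one. The spectrum $H\underline{\mathbb{Z}/4}$ is the Eilenberg--MacLane spectrum of a constant Mackey functor, so $(H\underline{\mathbb{Z}/4})^{C_2}\simeq H\mathbb{Z}/4$ is concentrated in degree $0$; and for the induced summands one has $(\mathrm{Ind}^{C_2}_{\{e\}}X)^{C_2}\simeq X$, again concentrated in degree $0$ when $X=H\mathbb{Z}/4$. There is no Tate contribution to worry about here. Also, the Frobenius module structure is not relevant to this lemma: the $N^{C_2}_{\{e\}}(H\mathbb{Z}/4)$-module structure on $H\underline{\mathbb{Z}/4}$ in the statement is the one coming from the multiplication map $\mu$, not the Frobenius.
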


To compute these values we initially turn to brute force calculation,
using computer assistance to evaluate $L_i^{(n)}F_{\mathbb{Z}/4}(\mathbb{Z}/2)$
for $i, n \le 6$.

\begin{introtheorem}\label{thm:intro_thr_groups}
The low degree homotopy groups of the terms in the equaliser (\ref{eq:htpy_eq})
for $A = H\underline{\mathbb{Z}/4}$ are as follows:
\[\begin{tabular}{>{$}c<{$}>{$}c<{$}>{$}c<{$}>{$}c<{$}>{$}c<{$}}
  i & \pi_i((\mathrm{THR}(\mathbb{Z}/4)^{\phi \mathbb{Z}/2})^{C_2}) & \pi_i(\mathrm{THR}(\mathbb{Z}/4)^{\phi \mathbb{Z}/2}) \\ \hline
  0 & \mathbb{Z}/4 & \mathbb{Z}/2 \\
  1 & (\mathbb{Z}/2)^4 & (\mathbb{Z}/2)^3 \\
  2 & (\mathbb{Z}/2)^7 & (\mathbb{Z}/2)^6 \\
  3 & (\mathbb{Z}/2)^{13} & (\mathbb{Z}/2)^{10} \\
  4 & (\mathbb{Z}/2)^{15} \oplus (\mathbb{Z}/4)^3 & (\mathbb{Z}/2)^{15} \\
  5 & (\mathbb{Z}/2)^{27} & (\mathbb{Z}/2)^{21}\\
  6 & (\mathbb{Z}/2)^{34} & (\mathbb{Z}/2)^{28}
\end{tabular}\]
\end{introtheorem}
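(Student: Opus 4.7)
The plan is to combine the equivalence
\[\mathrm{THR}(A)^{\phi \mathbb{Z}/2} \simeq A \otimes_{N^{C_2}_{\{e\}}A} N^{C_2}_{\{e\}}(A^{\phi \mathbb{Z}/2})\]
of \cite{dotto_geometric_2024} (applied to $A = H\underline{\mathbb{Z}/4}$) with the splitting $A^{\phi\mathbb{Z}/2} \simeq \bigoplus_{n\ge 0} \Sigma^n H\mathbb{Z}/2$ as Frobenius $A$-modules established in Section~\ref{sec:decomposition}, where each summand carries the $A$-module structure via the reduction $\mathbb{Z}/4 \twoheadrightarrow \mathbb{Z}/2$. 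Since the inner norm of a direct sum decomposes as
\[N^{C_2}_{\{e\}}\Bigl(\bigoplus_n \Sigma^n H\mathbb{Z}/2\Bigr) \simeq \bigoplus_{n \ge 0} N^{C_2}_{\{e\}}(\Sigma^n H\mathbb{Z}/2) \,\oplus\, \bigoplus_{0 \le n < m} (C_2)_+ \wedge \Sigma^{n+m}(H\mathbb{Z}/2 \wedge H\mathbb{Z}/2)\text{,}\]
each column of the table splits into a sum of ``diagonal'' contributions (to which Lemma~\ref{lem:intro_nonab} applies) and ``off-diagonal'' $C_2$-induced contributions.

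For the second column, forgetting $C_2$-equivariance collapses the equivalence to $A^{\phi\mathbb{Z}/2} \otimes_A A^{\phi\mathbb{Z}/2}$, so
\[\pi_i(\mathrm{THR}(\mathbb{Z}/4)^{\phi\mathbb{Z}/2}) \cong \bigoplus_{n,m \ge 0} \mathrm{Tor}^{\mathbb{Z}/4}_{i - n - m}(\mathbb{Z}/2, \mathbb{Z}/2)\text{.}\]
The periodic free resolution $\cdots \xrightarrow{2} \mathbb{Z}/4 \xrightarrow{2} \mathbb{Z}/4 \twoheadrightarrow \mathbb{Z}/2$ has vanishing differentials after $-\otimes_{\mathbb{Z}/4}\mathbb{Z}/2$, so every Tor is $\mathbb{Z}/2$, giving $\binom{i+2}{2}$ copies of $\mathbb{Z}/2$ in degree $i$ and matching the right-hand column.

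For the first column I take $C_2$-fixed points summand by summand. The off-diagonal summands, being $C_2$-induced, have genuine fixed points equal to the underlying spectrum $\Sigma^{n+m}(H\mathbb{Z}/2 \otimes_{H\mathbb{Z}/4} H\mathbb{Z}/2)$ already computed above, contributing one $\mathbb{Z}/2$ in each degree $i \ge n + m$ for each pair $0 \le n < m$. The diagonal summands contribute $L_i^{(n)} F_{\mathbb{Z}/4}(\mathbb{Z}/2)$ apiece by Lemma~\ref{lem:intro_nonab}, and I would substitute the explicit values computed by machine for $i, n \le 6$; the $\mathbb{Z}/4$ summands visible in the left-hand column at $i = 0$ and $i = 4$ then arise precisely from $\mathbb{Z}/4$ summands inside certain low-dimensional $L_i^{(n)} F_{\mathbb{Z}/4}(\mathbb{Z}/2)$.

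The main obstacle I anticipate is the computer-assisted evaluation of the $L_i^{(n)} F_{\mathbb{Z}/4}(\mathbb{Z}/2)$ themselves: one must construct a free simplicial resolution of $\mathbb{Z}/2$ over $\mathbb{Z}/4$, apply the non-additive functor $F_{\mathbb{Z}/4}(M) = (M \otimes_{\mathbb{Z}/4} M)^{C_2}$ levelwise, and compute the homotopy of the resulting simplicial abelian group. Because $F_{\mathbb{Z}/4}$ does not preserve direct sums, the dimensions at each simplicial level grow quickly with $n$ and $i$, making automation essential; and the extension data producing the $\mathbb{Z}/4$ summands must be tracked carefully rather than linearised, so as not to collapse $\mathbb{Z}/4$ to $(\mathbb{Z}/2)^2$ at any stage.
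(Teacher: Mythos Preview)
Your proposal is correct and follows essentially the same approach as the paper: the decomposition of $\mathrm{THR}(\mathbb{Z}/4)^{\phi\mathbb{Z}/2}$ into diagonal and $C_2$-induced off-diagonal summands you describe is exactly Corollary~\ref{cor:decomp}, your Tor computation of the right-hand column is Proposition~\ref{prop:pi_thr_as_sum}, your treatment of the off-diagonal fixed points is Proposition~\ref{prop:pi_thr_c2_as_sum}, and your appeal to Lemma~\ref{lem:intro_nonab} for the diagonal summands matches Corollary~\ref{cor:F_non_ab}. The obstacle you anticipate is also the one the paper meets head-on: it applies Dold--Kan to the periodic resolution $\cdots \xrightarrow{2} \mathbb{Z}/4 \xrightarrow{2} \mathbb{Z}/4$, applies $F_{\mathbb{Z}/4}$ levelwise, and computes the homology of the Moore complex by machine, with the $\mathbb{Z}/4$ summands at $i=0$ and $i=4$ arising from $L_0^{(0)}F_{\mathbb{Z}/4}(\mathbb{Z}/2) \cong \mathbb{Z}/4$ and $L_4^{(n)}F_{\mathbb{Z}/4}(\mathbb{Z}/2) \cong (\mathbb{Z}/2)^2 \oplus \mathbb{Z}/4$ for $n = 0,1,2$.
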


In Section~\ref{sec:action_on_htpy} we continue by computing the action on homotopy of the above-mentioned maps $f$ and $r$
up to degree $6$. In the case of $f$ this is a straightforward extension of
our computer program; for the map $r$ we do a more involved analysis.
As a result we can write down the long exact sequence associated to the homotopy
equaliser of $f$ and $r$, and so obtain short exact sequences describing
$\pi_i(\mathrm{TCR}(\mathbb{Z}/4)^{\phi \mathbb{Z}/2})$ for $-1 \le i \le 5$. That
is, we compute the desired homotopy groups up to solving certain extension problems.
For $i=-1$ and $i=0$ these extension problems are trivial, and in
Section~\ref{sec:extension_i_1} we solve the extension problem for $i=1$. Our results are summarised in the
following:

\begin{introtheorem} \label{thm:intro_tcr_groups}
  All non-trivial homotopy groups of $\mathrm{TCR}(\mathbb{Z}/4)^{\phi \mathbb{Z}/2}$ are
  in degrees $\ge -1$.
  We compute
  \begin{align*}\pi_{-1}(\mathrm{TCR}(\mathbb{Z}/4)^{\phi \mathbb{Z}/2}) &\cong \mathbb{Z}/2\\
  \pi_{0}(\mathrm{TCR}(\mathbb{Z}/4)^{\phi \mathbb{Z}/2}) &\cong \mathbb{Z}/4\\
  \pi_{1}(\mathrm{TCR}(\mathbb{Z}/4)^{\phi \mathbb{Z}/2}) &\cong
    (\mathbb{Z}/2)^2 \mathrm{,}\end{align*}
  and we have short exact sequences
\begin{align*}
  0 \to \mathbb{Z}/2 \to &\pi_{2}(\mathrm{TCR}(\mathbb{Z}/4)^{\phi \mathbb{Z}/2}) \to (\mathbb{Z}/2)^2 \to 0\\
  0 \to (\mathbb{Z}/2)^2 \to &\pi_{3}(\mathrm{TCR}(\mathbb{Z}/4)^{\phi \mathbb{Z}/2}) \to (\mathbb{Z}/2)^4 \to 0\\
  0 \to (\mathbb{Z}/2)^4 \to &\pi_{4}(\mathrm{TCR}(\mathbb{Z}/4)^{\phi \mathbb{Z}/2}) \to (\mathbb{Z}/2)^6 \oplus \mathbb{Z}/4 \to 0\\
  0 \to (\mathbb{Z}/2)^9 \to &\pi_{5}(\mathrm{TCR}(\mathbb{Z}/4)^{\phi \mathbb{Z}/2}) \to (\mathbb{Z}/2)^{10} \to 0 \mathrm{.}
\end{align*}
Moreover the groups $\pi_i(\mathrm{TCR}(\mathbb{Z}/4)^{\phi \mathbb{Z}/2})$ are
$4$-torsion for all $i$.
\end{introtheorem}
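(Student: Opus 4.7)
The plan is to read off the homotopy groups from the long exact sequence associated to the fibre sequence
\[\mathrm{TCR}(\mathbb{Z}/4)^{\phi\mathbb{Z}/2} \to (\mathrm{THR}(\mathbb{Z}/4)^{\phi\mathbb{Z}/2})^{C_2} \xrightarrow{f-r} \mathrm{THR}(\mathbb{Z}/4)^{\phi\mathbb{Z}/2}\]
underlying the equaliser~(\ref{eq:htpy_eq}). The two flanking terms have their low-degree homotopy tabulated in Theorem~\ref{thm:intro_thr_groups}, and the maps $f_*, r_*$ on those groups will have been determined explicitly in Section~\ref{sec:action_on_htpy} (where $f_*$ arises as a direct extension of the computer calculation producing Theorem~\ref{thm:intro_thr_groups}, and $r_*$ from a more involved direct analysis). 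Both flanking spectra are concentrated in non-negative degrees (consistent with the entries of Theorem~\ref{thm:intro_thr_groups} starting in degree $0$), so the long exact sequence immediately forces $\pi_i(\mathrm{TCR}(\mathbb{Z}/4)^{\phi\mathbb{Z}/2}) = 0$ for $i \le -2$.

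For each $-1 \le i \le 5$ I would slice the long exact sequence into the short exact sequence
\[0 \to \operatorname{coker}(f-r)_{i+1} \to \pi_i(\mathrm{TCR}(\mathbb{Z}/4)^{\phi\mathbb{Z}/2}) \to \ker(f-r)_i \to 0,\]
and compute the kernel and cokernel from the explicit matrix of $(f-r)_*$ on the generators used in Theorem~\ref{thm:intro_thr_groups}. This reads off the stated short exact sequences for $2 \le i \le 5$ directly. For $i = -1$, where the right-hand term vanishes, the sequence degenerates to $\pi_{-1} \cong \operatorname{coker}((f-r)_0 : \mathbb{Z}/4 \to \mathbb{Z}/2)$, which will come out to $\mathbb{Z}/2$ (equivalently, $(f-r)_0$ will turn out to vanish). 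For $i = 0$ the computation will yield $\ker(f-r)_0 = \mathbb{Z}/4$ and $\operatorname{coker}(f-r)_1 = 0$, pinning down $\pi_0 \cong \mathbb{Z}/4$. For $i = 1$ both the sub- and quotient-group are elementary abelian $2$-groups, and Section~\ref{sec:extension_i_1} is devoted to resolving the resulting extension as $(\mathbb{Z}/2)^2$.

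The $4$-torsion statement I would deduce structurally: $4 \cdot \mathrm{id}_{H\underline{\mathbb{Z}/4}}$ is null as a $\mathbb{Z}/2$-equivariant self-map (since $4 = 0$ in $\mathbb{Z}/4$), and $\mathrm{TCR}(H\underline{\mathbb{Z}/4})$ carries a natural $H\underline{\mathbb{Z}/4}$-module structure through the unit $H\underline{\mathbb{Z}/4} \to \mathrm{TCR}(H\underline{\mathbb{Z}/4})$. Therefore $4 \cdot \mathrm{id}_{\mathrm{TCR}(H\underline{\mathbb{Z}/4})}$ factors through the null self-map of $H\underline{\mathbb{Z}/4}$ and is itself null; since $(-)^{\phi\mathbb{Z}/2}$ is additive, $4$ acts as zero on $\mathrm{TCR}(\mathbb{Z}/4)^{\phi\mathbb{Z}/2}$ as well, so every homotopy group is $4$-torsion.

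Beyond the bookkeeping of the matrices $(f-r)_*$, the genuinely hard step will be the extension at $i = 1$: the long exact sequence alone cannot distinguish $(\mathbb{Z}/2)^2$ from $\mathbb{Z}/4$, so the resolution requires a dedicated geometric argument, which is the content of Section~\ref{sec:extension_i_1}. The higher extensions at $2 \le i \le 5$ are intentionally left open in the statement.
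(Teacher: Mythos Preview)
Your proposal is correct and matches the paper's approach essentially line for line: the paper derives the short exact sequences exactly as you describe in Theorem~\ref{thm:tcr_ses_low_deg}, reads off $\pi_{-1}$ and $\pi_0$ from the trivial extensions, and devotes Section~\ref{sec:extension_i_1} to resolving $\pi_1$. The only minor difference is the $4$-torsion argument: the paper instead observes that $\mathrm{TCR}(\mathbb{Z}/4)^{\phi\mathbb{Z}/2}$ is itself a ring spectrum with $\pi_0 \cong \mathbb{Z}/4$ (already computed), so every $\pi_i$ is a $\mathbb{Z}/4$-module; your route via the $H\underline{\mathbb{Z}/4}$-module structure on $\mathrm{TCR}$ is an equally valid variant of the same idea.
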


In particular note $\pi_1$ and $\pi_5$ cannot be isomorphic.
Demonstrating this was part of our
original motivation for computing these homotopy groups. As we explain in
the following subsection, via showing that the homotopy groups of
$\mathrm{TCR}(\mathbb{Z}/4)^{\phi \mathbb{Z}/2}$ are not $4$-periodic in
positive degrees, we can conclude that the homotopy groups of the genuine symmetric
and symmetric $L$-theories of $\mathbb{Z}/4$ are not canonically isomorphic in
non-negative degrees, resolving a question of Calm\`es et al.\ from 
\cite{calmes_hermitian_2021} Example~1.3.11.

In Section~\ref{sec:nadf} we return to a more theoretical approach, studying the
non-abelian derived functors $L^{(n)}_iF_{\mathbb{Z}/4}(\mathbb{Z}/2)$ for
general $i$ and $n$. The non-abelian derived functors of
$F_{R}$ are isomorphic to those of J. H. C. Whitehead's functor
$\Lambda_R$
of \cite{whitehead_certain_1950} (since $F_R$ and $\Lambda_R$ agree on free modules); the non-abelian
derived functors of $\Lambda_R$ were previously studied in \cite{simson_connected_1974} and
\cite{simson_stable_1974} %
but most of their results are only valid when $2$ is not a zero-divisor in $R$. We almost precisely narrow down the
value of $L^{(n)}_iF_{\mathbb{Z}/4}(\mathbb{Z}/2)$ for all $i$ and $n$, with just slight
uncertainties remaining.

\begin{introtheorem} \label{thm:intro_thm_nadf}
  For $i \ge 0$ fixed, $L_i^{(n)}F_{\mathbb{Z}/4}(\mathbb{Z}/2)$ takes the same value
  for all $0 \le n \le \floor{i/2}$. Moreover we determine this constant value, up to two possibilities in some cases:
  \begin{align*}L_i^{(0)} F_{\mathbb{Z}/4}(\mathbb{Z}/2) &\cong L_i^{(1)}
    F_{\mathbb{Z}/4}(\mathbb{Z}/2) \cong \dotsb \cong L_i^{\floor{i/2}}
    F_{\mathbb{Z}/4}(\mathbb{Z}/2)\\
    &\cong \begin{cases}
      (\mathbb{Z}/2)^{\floor{i/2}+1} \text{ or } (\mathbb{Z}/2)^{\floor{i/2}}
      \oplus \mathbb{Z}/4 \quad &\text{if $i \equiv 0 \text{ mod } 4$,}\\
      (\mathbb{Z}/2)^{\floor{i/2}+1} \text{ or } (\mathbb{Z}/2)^{\floor{i/2}+2} \quad &\text{if $i \equiv 1 \text{ mod } 4$,}\\
      (\mathbb{Z}/2)^{\floor{i/2}+1} \quad &\text{if $i \equiv 2 \text{ mod } 4$,}\\
      (\mathbb{Z}/2)^{\floor{i/2}+2} \quad &\text{if $i \equiv 3 \text{ mod } 4$.}
      \end{cases}\end{align*}
    For each $k$ we have
    $\abs{L_{4k}^{(0)}F_{\mathbb{Z}/4}(\mathbb{Z}/2)} =
    \abs{L_{4k+1}^{(0)}F_{\mathbb{Z}/4}(\mathbb{Z}/2)}$. That is, if we know
    the value for some $i$ congruent to $0$ mod $4$ then this tells us the value for $i+1$.

  For $\floor{i/2} < n \le i$ we have
  \[L_i^{(n)}F_{\mathbb{Z}/4}(\mathbb{Z}/2) \cong (\mathbb{Z}/2)^{i-n+1} \text{.}\]
  And $L_i^{(n)}F_{\mathbb{Z}/4}(\mathbb{Z}/2) = 0$ for $n > i$.
\end{introtheorem}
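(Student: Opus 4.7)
The plan is to compute $L_i^{(n)}F_{\mathbb{Z}/4}(\mathbb{Z}/2)$ via the periodic free resolution $\cdots \xrightarrow{2} \mathbb{Z}/4 \xrightarrow{2} \mathbb{Z}/4 \to \mathbb{Z}/2$. Applying Dold--Kan with an $n$-fold shift produces a simplicial $\mathbb{Z}/4$-module $P_\bullet^{(n)}$ resolving $\mathbb{Z}/2$ concentrated in degree $n$, so that $L_i^{(n)}F_{\mathbb{Z}/4}(\mathbb{Z}/2) = \pi_i F_{\mathbb{Z}/4}(P_\bullet^{(n)})$. The proof is organised around the fact that $F_{\mathbb{Z}/4}$ is a reduced quadratic functor with cross-effect $F_2(M, N) = M \otimes_{\mathbb{Z}/4} N$, equipping $F_{\mathbb{Z}/4}(P_\bullet^{(n)})$ with a two-step filtration: one layer involves $P_\bullet^{(n)} \otimes_{\mathbb{Z}/4} P_\bullet^{(n)}$ and is $(2n-1)$-connected, while the other is built essentially from $P_\bullet^{(n)}$ via the squaring map $\delta(m) = m \otimes m$ and is only $(n-1)$-connected.

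The three ranges of $n$ in the theorem correspond to three regimes of this filtration. For $n > i$ the entire filtration lives in degrees $> i$, so $\pi_i = 0$. For $\floor{i/2} < n \le i$ the cross-effect layer is $(2n-1)$-connected and hence trivial on $\pi_i$ when $i < 2n$, so the computation reduces to a Tor-style calculation from the diagonal layer alone; since $\mathrm{Tor}^{\mathbb{Z}/4}_k(\mathbb{Z}/2, \mathbb{Z}/2) = \mathbb{Z}/2$ for each $k \ge 0$, summing contributions from degrees $n, n+1, \dots, i$ yields $(\mathbb{Z}/2)^{i-n+1}$. For $0 \le n \le \floor{i/2}$ both layers contribute, and I would establish the stability $L_i^{(n)}F \cong L_i^{(n+1)}F$ in this range by comparing the long exact sequences attached to $P_\bullet^{(n)}$ and $P_\bullet^{(n+1)}$, using that the cross-effect shifts degrees by $2$ under suspension while the diagonal only shifts by $1$, so the new cross-effect contribution in degree $2(n+1)$ compensates for the vanishing diagonal contribution in an exact-sequence-compatible way. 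To pin down the specific stable value, I would carry out the explicit calculation of $L_i^{(\floor{i/2})}F_{\mathbb{Z}/4}(\mathbb{Z}/2)$ case-by-case according to the congruence class of $i$ modulo $4$; the two possibilities for $i \equiv 0, 1 \bmod 4$ reflect an unresolved extension between $(\mathbb{Z}/2)^2$ and $\mathbb{Z}/4$, and the equality $\abs{L_{4k}^{(0)}F} = \abs{L_{4k+1}^{(0)}F}$ should follow from a Bockstein-style long exact sequence attached to $0 \to \mathbb{Z}/2 \to \mathbb{Z}/4 \to \mathbb{Z}/2 \to 0$.

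The main obstacle is establishing the stability isomorphism in the range $0 \le n \le \floor{i/2}$. Connectivity alone forces the answer in the other two ranges, but stability requires simultaneous tracking of the diagonal and cross-effect layers under suspension, and these layers interact non-trivially because the $C_2$-swap on $\Sigma P \otimes \Sigma P$ carries the sign twist on $S^1 \otimes S^1$. The cleanest approach is probably an induction on $n$, bootstrapping from the low-degree computations of Section~\ref{sec:computation_of_homotopy} and propagating through the cross-effect short exact sequence, but verifying that no hidden extensions spoil stability at each step requires some care; this is also the likely source of the residual ambiguity in the $i \equiv 0, 1 \bmod 4$ cases.
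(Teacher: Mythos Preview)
Your proposal has the right general shape in the stable range $\floor{i/2} < n \le i$: the cross-effect $G_{\mathbb{Z}/4}(M) = M \otimes_{\mathbb{Z}/4} M$ is $(2n-1)$-connected, so the Dold--Puppe suspension sequence (the paper's Lemma~\ref{lem:f_g_exact}) gives isomorphisms $L_i^{(n)}F \cong L_{i+1}^{(n+1)}F$ for $i < 2n$ and one is in the regime of stable derived functors. However, your ``Tor-style calculation summing $\mathbb{Z}/2$'s'' is not a proof: you must still show no $\mathbb{Z}/4$ extensions occur. The paper does this differently, via the Simson--Tyc long exact sequence attached to $0 \to \mathbb{Z}/2 \xrightarrow{2} \mathbb{Z}/4 \to \mathbb{Z}/2 \to 0$, the observation that $F(2\alpha) = 4F(\alpha) = 0$ forces the connecting map to vanish, and the general fact (Simson--Tyc Corollary~8.10) that $L_q^s\Lambda_R$ is $2$-torsion. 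Without an analogue of this last input your argument does not resolve the extensions.

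The more serious gap is your proposed mechanism for stability when $0 \le n \le \floor{i/2}$. The Dold--Puppe sequence relates $L_i^{(n)}F$ to $L_{i+1}^{(n+1)}F$, not to $L_i^{(n+1)}F$, so ``comparing the long exact sequences for $P^{(n)}$ and $P^{(n+1)}$'' does not produce the isomorphism you want; in the range $i > 2n$ the cross-effect terms $L_i^{(n)}G \cong \mathbb{Z}/2$ are nonzero and obstruct any na\"ive compensation argument. The paper instead constructs a genuinely new fibre sequence (Lemma~\ref{lem:sigma_Z4_C2_fibre_seq}, following the total-cofibre technique of \cite{dotto_geometric_2024} Section~5 applied to $\mathbb{Z}/2 \xrightarrow{2} \mathbb{Z}/4$), whose long exact sequence (Corollary~\ref{cor:n_nplus_exact}) directly links $L_i^{(n)}F(\mathbb{Z}/2)$ and $L_i^{(n+1)}F(\mathbb{Z}/2)$ through the Bredon homology group $\pi_i^{C_2}(\Sigma^{n\rho}H\mathbb{Z}/4)$. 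The stability then follows because this Bredon group vanishes for $i > 2n$. The boundary analysis near $i = 2n$, and the residual two-possibility ambiguity for $i \equiv 0,1 \bmod 4$, come from combining this sequence with the Dold--Puppe one and splitting into the cases $n$ even and $n$ odd according to whether $\pi_{2n}^{C_2}(\Sigma^{n\rho}H\mathbb{Z}/4)$ is $\mathbb{Z}/4$ or $\mathbb{Z}/2$; your outline does not identify this ingredient.
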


This result, proved in Theorem~\ref{thm:l_f_z2}, is enough to put quite tight bounds on the order of
$\pi_i((\mathrm{THR}(\mathbb{Z}/4)^{\phi \mathbb{Z}/2})^{C_2})$.\footnote{Note
  however that in Theorem~\ref{thm:intro_thm_nadf} we do not analyse the $f$ and $r$ maps from (\ref{eq:htpy_eq});
  this together with the remaining uncertainties in some degrees
  mean that this theorem is not sufficient
  to recover Theorem~\ref{thm:intro_tcr_groups}.} In
Section~\ref{sec:size_est} we use the long exact sequence in homotopy for the equaliser formula (\ref{eq:htpy_eq})
to get upper and lower bounds for the
orders of the groups $\pi_i(\mathrm{TCR}(\mathbb{Z}/4)^{\phi \mathbb{Z}/2})$.
Despite Theorem~\ref{thm:intro_tcr_groups} showing that these groups are not $4$-periodic in all positive degrees, one
might still have
imagined that beyond a certain point they could start repeating
with some period. The following result (proved in Theorem~\ref{thm:asymptotics})
shows that this is not the case; rather, the homotopy groups
continue to grow in size.

\begin{introtheorem} \label{thm:intro_asymptotics}
  The logarithm of the order of the group
  \[\pi_i(\mathrm{TCR}(\mathbb{Z}/4)^{\phi \mathbb{Z}/2})\]
  exhibits asymptotically quadratic growth with respect to $i$.

  More precisely, for $i \ge 0$ we have upper and lower bounds
  \[2^{(i-1)(i+1)/8} \le \abs{\pi_i\left(\mathrm{TCR}(\mathbb{Z}/4)^{\phi
          \mathbb{Z}/2}\right)} \le 2^{(i+2)(9i+20)/8} \text{.}\]
\end{introtheorem}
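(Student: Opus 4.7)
The plan is to apply the long exact sequence in homotopy associated to the fibre sequence
\[\mathrm{TCR}(\mathbb{Z}/4)^{\phi \mathbb{Z}/2} \to (\mathrm{THR}(\mathbb{Z}/4)^{\phi \mathbb{Z}/2})^{C_2} \xrightarrow{f-r} \mathrm{THR}(\mathbb{Z}/4)^{\phi \mathbb{Z}/2}\]
coming from the equaliser description~(\ref{eq:htpy_eq}). Writing $A = H\underline{\mathbb{Z}/4}$, $T^{C_2}_i = \abs{\pi_i((\mathrm{THR}(A)^{\phi\mathbb{Z}/2})^{C_2})}$ and $T_i = \abs{\pi_i(\mathrm{THR}(A)^{\phi\mathbb{Z}/2})}$, the long exact sequence produces, for each $i$, a short exact sequence
\[0 \to \mathrm{coker}\bigl(\pi_{i+1}(f-r)\bigr) \to \pi_i(\mathrm{TCR}(A)^{\phi\mathbb{Z}/2}) \to \ker\bigl(\pi_i(f-r)\bigr) \to 0\]
from which one reads off the two-sided bounds
\[\frac{T^{C_2}_i}{T_i} \le \abs{\pi_i(\mathrm{TCR}(A)^{\phi\mathbb{Z}/2})} \le T^{C_2}_i \cdot T_{i+1} \text{.}\]
Thus it suffices to estimate $T^{C_2}_i$ and $T_i$ from above and below.

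Next I would plug in the decomposition of $\mathrm{THR}(A)^{\phi\mathbb{Z}/2}$ from Section~\ref{sec:decomposition}, coming from the splitting $A^{\phi\mathbb{Z}/2} \simeq \bigoplus_n \Sigma^n H(\pi_n A^{\phi\mathbb{Z}/2})$. Combined with Lemma~\ref{lem:intro_nonab}, this expresses
\[T^{C_2}_i = \prod_{n \ge 0} \abs{L^{(n)}_i F_{\mathbb{Z}/4}(\mathbb{Z}/2)}^{b_n},\]
where $b_n$ denotes the $\mathbb{Z}/2$-rank of $\pi_n A^{\phi\mathbb{Z}/2}$, with a simpler analogous formula for $T_i$ whose factors can be computed directly (no non-abelian derived functors appear, since no $C_2$-fixed points are taken). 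Theorem~\ref{thm:intro_thm_nadf} then pins down each $\abs{L^{(n)}_i F_{\mathbb{Z}/4}(\mathbb{Z}/2)}$ up to a factor of two: for $n \le \floor{i/2}$ its log order lies in $\{\floor{i/2}+1, \floor{i/2}+2\}$, for $\floor{i/2} < n \le i$ it equals $i-n+1$, and for $n > i$ the group vanishes. Summing these logs over $n$ (weighted by $b_n$), $\log_2 T^{C_2}_i$ and $\log_2 T_i$ emerge as quadratic polynomials in $i$, with the leading coefficient of $\log_2 T^{C_2}_i$ strictly larger than that of $\log_2 T_i$ thanks to the contribution of the regime $n \le \floor{i/2}$.

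The lower bound $2^{(i-1)(i+1)/8}$ should then fall out of the quadratic discrepancy $\log_2 T^{C_2}_i - \log_2 T_i$, while the upper bound $2^{(i+2)(9i+20)/8}$ should fall out of the quadratic sum $\log_2 T^{C_2}_i + \log_2 T_{i+1}$, the explicit constants being obtained by elementary summations of the formulas above. The main obstacle will be careful bookkeeping of two sources of uncertainty: the factor-of-two ambiguities in Theorem~\ref{thm:intro_thm_nadf}, which when summed over the $O(i)$ relevant values of $n$ contribute only $O(i)$ to $\log_2 T^{C_2}_i$ and so cannot spoil the leading quadratic terms; and the multiplicities $b_n$, which must be determined explicitly, or at least bounded linearly in $n$, from the structure of $\pi_* A^{\phi\mathbb{Z}/2}$ established in Section~\ref{sec:decomposition}. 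Once both are under control, the stated inequalities reduce to polynomial arithmetic.
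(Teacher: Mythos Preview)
Your overall strategy matches the paper's: bound $\abs{\pi_i(\mathrm{TCR}(\mathbb{Z}/4)^{\phi\mathbb{Z}/2})}$ between $T^{C_2}_i/T_i$ and $T^{C_2}_i \cdot T_{i+1}$ via the long exact sequence, then feed in Theorem~\ref{thm:intro_thm_nadf}. But there is a genuine gap in your formula for $T^{C_2}_i$.

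The decomposition of $(\mathrm{THR}(\mathbb{Z}/4)^{\phi\mathbb{Z}/2})^{C_2}$ is \emph{not} of the form $\prod_n \abs{L^{(n)}_i F_{\mathbb{Z}/4}(\mathbb{Z}/2)}^{b_n}$. The norm does not split additively: when you expand $N^{C_2}_{\{e\}}\bigl(\bigoplus_n \Sigma^n H\mathbb{Z}/2\bigr)$ you get, besides the diagonal summands $N^{C_2}_{\{e\}}(\Sigma^n H\mathbb{Z}/2)$, induced cross-terms $(C_2)_+ \otimes_{\mathbb{S}} \Sigma^{n+m} H\mathbb{Z}/2 \otimes_{H\mathbb{Z}/4} H\mathbb{Z}/2$ for $n < m$. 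After taking $C_2$-fixed points these contribute an extra factor of $2^{\lfloor (i+1)^2/4\rfloor}$ to $T^{C_2}_i$ (see Corollary~\ref{cor:decomp} and Lemma~\ref{lem:thr_phi2_c2_order_prelim}). Your ``multiplicities $b_n$'' are all equal to $1$ here and do not account for these cross-terms.

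This omission is fatal for the lower bound, not merely a bookkeeping nuisance. Without the off-diagonal contribution one has $\log_2 T^{C_2}_i \approx \tfrac{3}{8}i^2$ (roughly $\tfrac{1}{4}i^2$ from the stable range $n \le \lfloor i/2\rfloor$ plus $\tfrac{1}{8}i^2$ from the unstable range), whereas $\log_2 T_i = \tfrac{1}{2}(i+1)(i+2) \approx \tfrac{1}{2}i^2$. So $\log_2(T^{C_2}_i/T_i) \approx -\tfrac{1}{8}i^2$ is negative and you get no growth at all. It is precisely the off-diagonal factor $2^{\lfloor(i+1)^2/4\rfloor} \approx 2^{i^2/4}$ that pushes $\log_2 T^{C_2}_i$ up to $\approx \tfrac{5}{8}i^2$ and makes $\log_2(T^{C_2}_i/T_i) \approx \tfrac{1}{8}i^2$ positive, yielding the bound $(i-1)(i+1)/8$. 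Once you include this term your argument goes through exactly as in the paper.
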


\subsection*{Relation to \texorpdfstring{$L$}{L}-theory}
\addcontentsline{toc}{subsection}{Relation to \texorpdfstring{$L$}{L}-theory}

Our original motivation for studying the geometric fixed points of
$\mathrm{TCR}$ was due to connections to $L$-theory via a result of
Harpaz, Nikolaus and Shah \cite{harpaz_real_nodate}. In this subsection we
describe the $L$-theoretic consequences of our calculations.

Let us first recall the relevant flavours of $L$-theory. Let $R$ be a
commutative ring, possibly equipped with an involution. Under the perspective of
Calm\`{e}s et al.\ \cite{calmes_hermitian_2023-1} we can study Poincar\'e structures on the perfect
derived $\infty$-category $\mathcal{D}^p(R)$, and their associated $L$-theory spectra. In
particular we have the symmetric structure $\Qoppa^{s}_R$ and the
genuine symmetric structure $\Qoppa^{gs}_R$ (see \cite{calmes_hermitian_2023}
4.2.12, 4.2.13, 4.2.23).
The $L$-theory spectrum (\cite{calmes_hermitian_2023-1} 4.4.4)
$L(\mathcal{D}^p(R), \Qoppa^{s}_R)$ associated to the symmetric
structure is Ranicki's $4$-periodic symmetric $L$-theory spectrum, while the spectrum $L(\mathcal{D}^p(R), \Qoppa^{gs}_R)$ associated to
the genuine symmetric structure has homotopy groups given by the non-periodic symmetric
$L$-groups defined by Ranicki in \cite{ranicki_algebraic_1992}.

We would like to understand the relation between these two different
$L$-theories. There is a canonical comparison map
\begin{equation}\label{eq:l-theory_comp} L(\mathcal{D}^p(R), \Qoppa^{gs}_R) \to L(\mathcal{D}^p(R), \Qoppa^{s}_R)\end{equation}
from the genuine symmetric $L$-theory to the symmetric $L$-theory. Theorem~6 of \cite{calmes_hermitian_2021}
says that if $R$ is a coherent ring with finite global dimension $d$, then this map is an injection on homotopy groups in degree $d-2$
and an isomorphism on homotopy in degrees $\ge d-1$. That is, for a sufficiently
nice ring $R$ these $L$-theories agree in sufficiently high degrees. In
\cite{calmes_hermitian_2021} Example~1.3.11 they show that
this bound is tight when allowing rings with involution: for the
$d$-dimensional ring $R = \mathbb{F}_2[\mathbb{Z}^d]$ with involution induced by the
inversion of the group $\mathbb{Z}^d$, the map (\ref{eq:l-theory_comp}) is not
surjective in degree $d-2$. However in general there are not many
rings $R$ where both $L$-theory spectra have been computed, and
after giving the above example Calm\`es et al.\ pose the question of identifying any commutative ring
with trivial involution such that it is not the case that the map (\ref{eq:l-theory_comp}) is
an isomorphism on all non-negative homotopy groups.

We use our results to answer this question, showing that in the
case of $R = \mathbb{Z}/4$ (considered as having trivial involution) the two
$L$-theories are far from always coinciding in non-negative degrees.

\begin{introcorollary}
  The comparison map from genuine symmetric to symmetric $L$-theories
  \[\pi_i\Big(L(\mathcal{D}^p(\mathbb{Z}/4), \Qoppa^{gs}_{\mathbb{Z}/4})\Big) \to
 \pi_i\Big(L(\mathcal{D}^p(\mathbb{Z}/4), \Qoppa^{s}_{\mathbb{Z}/4})\Big)
\]
fails to be an isomorphism for some $0 \le i \le 5$, and moreover for $i$ sufficiently
large it is never an isomorphism.
\end{introcorollary}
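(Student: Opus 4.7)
The plan is to apply the Harpaz--Nikolaus--Shah comparison result \cite{harpaz_real_nodate}, which for a suitable ring with involution relates the fibre of the symmetrisation map (\ref{eq:l-theory_comp}) to $\mathrm{TCR}(R)^{\phi \mathbb{Z}/2}$. Specialising to $R = \mathbb{Z}/4$ with trivial involution, this will produce a long exact sequence linking $\pi_* L(\mathcal{D}^p(\mathbb{Z}/4), \Qoppa^{gs}_{\mathbb{Z}/4})$, $\pi_* L(\mathcal{D}^p(\mathbb{Z}/4), \Qoppa^{s}_{\mathbb{Z}/4})$ and $\pi_* \mathrm{TCR}(\mathbb{Z}/4)^{\phi \mathbb{Z}/2}$; whether the comparison map is an isomorphism on $\pi_i$ can then be read off from the relevant portion of that sequence. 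I would combine this with Ranicki's $4$-periodicity of symmetric $L$-theory, which makes $\abs{\pi_i L(\mathcal{D}^p(\mathbb{Z}/4), \Qoppa^{s}_{\mathbb{Z}/4})}$ a bounded sequence in $i$.

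For the first assertion, I would assume for contradiction that (\ref{eq:l-theory_comp}) is an isomorphism on $\pi_i$ for every $0 \le i \le 5$. Combined with $4$-periodicity of the target, this would force the low-degree groups $\pi_i \mathrm{TCR}(\mathbb{Z}/4)^{\phi \mathbb{Z}/2}$ for $-1 \le i \le 5$ (as extracted from the HNS long exact sequence) to satisfy strong periodicity/vanishing constraints. But Theorem~\ref{thm:intro_tcr_groups} gives these groups explicitly: in particular $\abs{\pi_1(\mathrm{TCR}(\mathbb{Z}/4)^{\phi \mathbb{Z}/2})} = 4$ while $\abs{\pi_5(\mathrm{TCR}(\mathbb{Z}/4)^{\phi \mathbb{Z}/2})} = 2^{19}$ (read off from the short exact sequence for $\pi_5$), and no such periodic constraint can be satisfied.

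For the second assertion, assume (\ref{eq:l-theory_comp}) is an isomorphism on $\pi_i$ for all $i \ge N$. Then $\pi_i L(\mathcal{D}^p(\mathbb{Z}/4), \Qoppa^{gs}_{\mathbb{Z}/4})$ would be eventually $4$-periodic and hence of bounded order. The HNS long exact sequence then bounds
\[\abs{\pi_i \mathrm{TCR}(\mathbb{Z}/4)^{\phi \mathbb{Z}/2}} \le \abs{\pi_{i+1} L(\mathcal{D}^p(\mathbb{Z}/4), \Qoppa^{s}_{\mathbb{Z}/4})} \cdot \abs{\pi_i L(\mathcal{D}^p(\mathbb{Z}/4), \Qoppa^{gs}_{\mathbb{Z}/4})}\text{,}\]
which would remain bounded in $i$, contradicting the quadratic asymptotic lower bound $\log_2 \abs{\pi_i \mathrm{TCR}(\mathbb{Z}/4)^{\phi \mathbb{Z}/2}} \ge (i-1)(i+1)/8$ from Theorem~\ref{thm:intro_asymptotics}.

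The principal obstacle is making the HNS identification sufficiently precise -- in particular identifying any grading shift and confirming that the fibre is literally (a shift of) $\mathrm{TCR}(\mathbb{Z}/4)^{\phi \mathbb{Z}/2}$ for this choice of ring with trivial involution -- so that the long exact sequence and order inequality above are justified. Once that setup is in place, both contradictions are purely numerical, comparing orders already tabulated in Theorems~\ref{thm:intro_tcr_groups} and \ref{thm:intro_asymptotics}.
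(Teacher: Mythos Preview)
Your proposal contains a genuine misidentification of the Harpaz--Nikolaus--Shah result. The theorem does \emph{not} say that $\mathrm{TCR}(R)^{\phi \mathbb{Z}/2}$ is (a shift of) the fibre of the comparison map $L^{gs} \to L^{s}$; rather, it identifies $\mathrm{TCR}(R)^{\phi \mathbb{Z}/2}$ with normal $L$-theory, i.e.\ the \emph{cofibre} of the map $L(\mathcal{D}^p(R), \Qoppa^{q}_R) \to L(\mathcal{D}^p(R), \Qoppa^{gs}_R)$ from \emph{quadratic} to genuine symmetric $L$-theory. Since quadratic and symmetric $L$-theory differ, there is no long exact sequence linking $\pi_\ast L^{gs}$, $\pi_\ast L^{s}$ and $\pi_\ast \mathrm{TCR}^{\phi\mathbb{Z}/2}$ directly, and your order inequality for $\abs{\pi_i \mathrm{TCR}(\mathbb{Z}/4)^{\phi\mathbb{Z}/2}}$ is not justified. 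This is not a matter of pinning down a grading shift; the three-term sequence you want simply does not exist.

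The paper's argument instead compares the cofibre sequence $L^q \to L^{gs} \to \mathrm{TCR}^{\phi\mathbb{Z}/2}$ with the cofibre sequence $L^q \to L^s \to C$, where $C$ is $4$-periodic because $L^q \to L^s$ is a map of $L^s$-modules. If $L^{gs} \to L^s$ is an isomorphism on $\pi_i$, the $5$-lemma (resp.\ $4$-lemma) on the resulting map of long exact sequences shows $\mathrm{TCR}^{\phi\mathbb{Z}/2} \to C$ is an isomorphism (resp.\ injection) on $\pi_i$, and one then argues against $4$-periodicity of $C$ using Theorems~\ref{thm:intro_tcr_groups} and \ref{thm:intro_asymptotics}. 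There is also a logical slip in your second assertion: the negation of ``never an isomorphism for sufficiently large $i$'' is ``an isomorphism for arbitrarily large $i$'', not ``an isomorphism for all $i \ge N$''. The paper's argument handles each individual $i$ via the $4$-lemma injection, which is what is needed.
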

\begin{proof}
We use the result of
\cite{harpaz_real_nodate} relating $L$-theories to $\mathrm{TCR}$. Let $\Qoppa^{q}_R$
denote the quadratic Poincar\'e structure on $\mathcal{D}^p(R)$, whose
associated $L$-theory $L(\mathcal{D}^p(R), \Qoppa^{q}_R)$ is Ranicki's $4$-periodic
quadratic $L$-theory spectrum. The Poincar\'e structure $\Qoppa^{q}_R$ is the
initial Poincar\'e structure on $\mathcal{D}^p(R)$ compatible with the duality
$\mathrm{Hom}({-}, R)$ (\cite{calmes_hermitian_2023} 1.3.5), so there is a canonical map $L(\mathcal{D}^p(R),
\Qoppa^{q}_R) \to L(\mathcal{D}^p(R), \Qoppa^{gs}_R)$. The cofibre of this map
is known as normal $L$-theory. Harpaz, Nikolaus and Shah show that in fact normal
$L$-theory is equivalent to the geometric fixed points of $\mathrm{TCR}$, so we
have a cofibre sequence of spectra
\[L(\mathcal{D}^p(R), \Qoppa^{q}_R) \to L(\mathcal{D}^p(R), \Qoppa^{gs}_R) \to
  \mathrm{TCR}(R)^{\phi \mathbb{Z}/2} \mathrm{.}\]
Consider the map of cofibre sequences
\[\begin{tikzcd}
L(\mathcal{D}^p(R), \Qoppa^{q}_R) \ar[r] \ar[d, equal] & L(\mathcal{D}^p(R),
\Qoppa^{gs}_R) \ar[r] \ar[d] &
\mathrm{TCR}(\mathbb{Z}/4)^{\phi \mathbb{Z}/2} \ar[d]\\
L(\mathcal{D}^p(R), \Qoppa^{q}_R) \ar[r] & L(\mathcal{D}^p(R), \Qoppa^{s}_R)
\ar[r] & C \text{,}\end{tikzcd}
  \]
  where $C$ is the cofibre of $L(\mathcal{D}^p(R), \Qoppa^{q}_R) \to
  L(\mathcal{D}^p(R), \Qoppa^{s}_R)$.

Suppose the map $L(\mathcal{D}^p(R), \Qoppa^{gs}_R) \to
  L(\mathcal{D}^p(R), \Qoppa^{s}_R)$ is an isomorphism on homotopy groups
  $\pi_i$ for all $0 \le i \le 5$. Consider the map of long exact sequences in
  homotopy groups induced by the above map of cofibre sequences. By the $5$-lemma
  the map $\mathrm{TCR}(\mathbb{Z}/4)^{\phi \mathbb{Z}/2} \to C$ is an isomorphism on
  $\pi_i$ for $1 \le i \le 5$.
  Observe $L(\mathcal{D}^p(R), \Qoppa^{q}_R) \to
L(\mathcal{D}^p(R), \Qoppa^{s}_R)$ is a map of
$L(\mathcal{D}^p(R), \Qoppa^{s}_R)$-modules and $L(\mathcal{D}^p(R),
\Qoppa^{s}_R)$ is a $4$-periodic ring spectrum, so $C$ is $4$-periodic and hence we conclude that $\pi_1(\mathrm{TCR}(\mathbb{Z}/4)^{\phi \mathbb{Z}/2}) \cong
 \pi_5(\mathrm{TCR}(\mathbb{Z}/4)^{\phi \mathbb{Z}/2})$. However
 Theorem~\ref{thm:intro_tcr_groups}
 contradicts this, and so our earlier supposition must be false;
 that is, the comparison map between the homotopy groups of the genuine symmetric and
symmetric $L$-groups must fail to be an isomorphism somewhere in degrees $0$ to
$5$\footnote{In fact running the same argument more carefully shows that the map
fails to be an isomorphism for some $i \in \{0, 1, 4, 5\}$}.

Similarly suppose the map $L(\mathcal{D}^p(R), \Qoppa^{gs}_R) \to
  L(\mathcal{D}^p(R), \Qoppa^{s}_R)$ induces an injection on $\pi_i$ for some particular
  $i$. Then the $4$-lemma shows that
  $\pi_i(\mathrm{TCR}(\mathbb{Z}/4)^{\phi \mathbb{Z}/2}) \to \pi_i(C)$ is
  injective. But $C$ is $4$-periodic and the homotopy groups of
  $\mathrm{TCR}(\mathbb{Z}/4)^{\phi \mathbb{Z}/2}$ grow according to
  Theorem~\ref{thm:intro_asymptotics}, so for $i$ sufficiently large
  $\pi_i(\mathrm{TCR}(\mathbb{Z}/4)^{\phi \mathbb{Z}/2})$ cannot inject into
  $\pi_i(C)$. We conclude that above a certain degree the
  comparison map is never an injection (and so in particular never an isomorphism) on homotopy.
\end{proof}

\subsection*{Computer code and results}
\addcontentsline{toc}{subsection}{Computer code and results}

The computer program used to support the computations in this paper is available
at \url{https://github.com/thjread/tcr_geo_fixed}, together with the raw program output.
The computer code is written in Python, utilising the GAP computational group theory
library \cite{gap_group_gap_2024} via SageMath \cite{sage_developers_sagemath_2024}.

\subsection*{Acknowledgements}
\addcontentsline{toc}{subsection}{Acknowledgements}

I would like to thank Emanuele Dotto and Irakli Patchkoria for encouraging me
to study these homotopy groups, and for many helpful
conversations during the course of the project.

The author is supported by the Warwick Mathematics Institute Centre for Doctoral Training, and gratefully acknowledges funding from the University of Warwick.

\section{Preliminaries and conventions}\label{sec:conventions}

We first establish some notation, and review some technical details and
background material.

\subsection{Point-set models and model structures} \label{sec:pointset}

In a few places our argument depends on using a particular point-set model of
spectra. We will work with orthogonal $\mathbb{Z}/2$-equivariant spectra,
generally following Schwede's lecture notes \cite{schwede_lectures_2023} for
the basic constructions described as follows.

We denote the Eilenberg-MacLane spectrum of an abelian group $A$ by $HA$, and
model this as an orthogonal spectrum
via letting the $n$th level $(HA)_n$ be the reduced $A$-linearisation of the $n$-sphere $A[S^n]$.
Similarly we denote the Eilenberg-MacLane spectrum of an abelian group $A$ with involution
$w$ by $H(A, w)$, and model this as an orthogonal $\mathbb{Z}/2$-spectrum via
letting $H(A, w)_n = A[S^n]$ with the $\mathbb{Z}/2$-action induced by $w$.
Sometimes for $A$ an abelian group we want to work with the
$\mathbb{Z}/2$-spectrum $H(A, \text{id}_A)$, the Eilenberg-MacLane
spectrum of $A$ considered as having trivial involution; for convenience we still denote
this $HA$, and whether we mean a spectrum or a $\mathbb{Z}/2$-spectrum should be
clear from context.
We define the geometric fixed
points of a $\mathbb{Z}/2$-spectrum $X$ via $X^{\phi \mathbb{Z}/2}(M) \coloneqq X(M \otimes_{\mathbb{S}}
  \rho_{\mathbb{Z}/2})^{\mathbb{Z}/2}$, and we define the
  non-derived version of the norm of a spectrum $Y$ via $\tilde{N}_{\{e\}}^{\mathbb{Z}/2}(Y) \coloneqq Y
  \otimes_\mathbb{S} Y$ where $\mathbb{Z}/2$ acts by swapping the factors.

  We work with the flat
  model structure of \cite{stolz_equivariant_2011},
  \cite{brun_equivariant_2022}. This in particular means that given a strictly
  commutative ring spectrum we can choose a
  strictly commutative and flat cofibrant replacement.
  Except when explicitly stated we will always want to work with derived constructions on spectra. We let
  $N_{\{e\}}^{C_2}$ denote the derived Hill-Hopkins-Ravenel norm (i.e.\ taking a
  cofibrant replacement followed by applying $\tilde{N}_{\{e\}}^{C_2}$) and write
  $\otimes_R$ for the derived smash product relative to a ring spectrum $R$.
  Note that the above geometric fixed point construction does not need to be
  derived: it gives the correct homotopy type for any $\mathbb{Z}/2$-spectrum
  $X$ without needing to take a cofibrant replacement.

  For a $\mathbb{Z}/2$-spectrum $X$
we denote the homotopy groups of the genuine $C_2$-fixed points by
\[\pi_i^{C_2}(X) \coloneqq \pi_i\left( X^{C_2} \right) \text{.}\]

  We now describe the Frobenius module structure on the geometric fixed points of
  a commutative ring spectrum (\cite{dotto_real_2021} Section~2.5).
  There is a canonical map
  \[Y \xrightarrow{\Delta} (\tilde{N}_{\{e\}}^{C_2}(Y))^{\phi C_2} \mathrm{,}\]
  which is an equivalence of spectra whenever $Y$ is cofibrant. %
  If $R$ is a commutative ring spectrum then there is a canonical multiplication
  map \[\tilde{N}^{C_2}_{\{e\}}(R) \xrightarrow{\mu} R \mathrm{,}\]
    which is $C_2$-equivariant when we consider $R$ as a $C_2$-spectrum with
  trivial $C_2$-action. Using these maps we define the Frobenius module structure as follows.
  \begin{definition} \label{def:frobenius}
    The Frobenius $R$-module structure on $R^{\phi \mathbb{Z}/2}$ is defined by
    the map
    \[R \xrightarrow{\Delta} (N_{\{e\}}^{C_2}(R))^{\phi C_2}
      \xrightarrow{\mu^{\phi \mathbb{Z}/2}} R^{\phi \mathbb{Z}/2} \text{.}\]
  \end{definition}

  In fact although this definition uses the derived norm, we could have equally
  used the underived norm.

  \begin{lemma} \label{lem:frob_non_derived}
    The map defining the Frobenius module structure is homotopic to %
    \[R \xrightarrow{\Delta} (\tilde{N}_{\{e\}}^{C_2}(R))^{\phi C_2}
      \xrightarrow{\mu^{\phi \mathbb{Z}/2}} R^{\phi \mathbb{Z}/2} \text{.}\]
  \end{lemma}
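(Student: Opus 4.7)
The plan is to compare the two maps by passing through a strictly commutative flat cofibrant replacement $q\colon \tilde R \to R$, which exists by the flat model structure on commutative orthogonal ring spectra recalled earlier in Section~\ref{sec:pointset}. By construction the derived norm is $N_{\{e\}}^{C_2}(R) = \tilde N_{\{e\}}^{C_2}(\tilde R)$, and the derived Frobenius map of Definition~\ref{def:frobenius} is, in the homotopy category, represented by applying $\Delta$ and $\mu$ on $\tilde R$ and transporting along the equivalences $q$ and $q^{\phi \mathbb{Z}/2}$. The idea is to exhibit an honest commutative diagram comparing this with the non-derived map, then invert $q$ in the homotopy category.

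Concretely, I would write down the diagram
\[\begin{tikzcd}
\tilde R \ar[r, "\Delta"] \ar[d, "q"'] & (\tilde N_{\{e\}}^{C_2}(\tilde R))^{\phi \mathbb{Z}/2} \ar[r, "\mu^{\phi \mathbb{Z}/2}"] \ar[d, "\tilde N_{\{e\}}^{C_2}(q)^{\phi \mathbb{Z}/2}"'] & \tilde R^{\phi \mathbb{Z}/2} \ar[d, "q^{\phi \mathbb{Z}/2}"]\\
R \ar[r, "\Delta"'] & (\tilde N_{\{e\}}^{C_2}(R))^{\phi \mathbb{Z}/2} \ar[r, "\mu^{\phi \mathbb{Z}/2}"'] & R^{\phi \mathbb{Z}/2}
\end{tikzcd}\]
and check that the left square commutes by naturality of the canonical transformation $\Delta$, and that the right square commutes strictly because $q$ is a map of (strictly) commutative ring spectra, so $\mu_R \circ \tilde N_{\{e\}}^{C_2}(q) = q \circ \mu_{\tilde R}$ already on the point-set level, before applying geometric fixed points. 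The bottom row is exactly the non-derived map whose homotopy class we want to identify.

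Finally I would use that $q$ is a weak equivalence by construction, and that $q^{\phi \mathbb{Z}/2}$ is also a weak equivalence because the point-set geometric fixed points functor preserves weak equivalences without any derivation (as noted in Section~\ref{sec:pointset}). Commutativity of the outer rectangle then shows that, in the homotopy category, the non-derived map is equal to $q^{\phi \mathbb{Z}/2} \circ (\mu^{\phi \mathbb{Z}/2} \circ \Delta_{\tilde R}) \circ q^{-1}$, which is by definition the derived Frobenius map. I do not expect any genuine obstacle: the argument reduces to naturality of $\Delta$ and $\mu$ together with the ring-map property of $q$, and the only thing requiring care is matching conventions so that the top-row composite really does represent the derived Frobenius after inverting $q$ in the homotopy category.
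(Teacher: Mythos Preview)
Your proposal is correct and essentially identical to the paper's proof: the paper also chooses a commutative cofibrant replacement $R^{\mathrm{cof}} \to R$, writes down exactly the same two-square diagram comparing the derived and non-derived composites, and concludes by noting that the outer vertical maps are equivalences. You have simply spelled out a bit more explicitly why each square commutes and why $q^{\phi\mathbb{Z}/2}$ is an equivalence.
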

  \begin{proof}
  Taking a commutative cofibrant replacement $R^{\text{cof}}$ for $R$, we have a commutative diagram
  \[\begin{tikzcd}
    R^{\text{cof}} \ar[r, "\Delta"] \ar[d, "\simeq"] &
    \left( \tilde{N}_{\{e\}}^{\mathbb{Z}/2}(R^\text{cof})
    \right)^{\phi \mathbb{Z}/2} \ar[r, "\mu^{\phi \mathbb{Z}/2}"] \ar[d] &
    (R^{\text{cof}})^{\phi \mathbb{Z}/2} \ar[d, "\simeq"]\\
    R \ar[r, "\Delta"] & \left(
      \tilde{N}_{\{e\}}^{\mathbb{Z}/2}R \right)^{\phi \mathbb{Z}/2}\ar[r,
    "\mu^{\phi \mathbb{Z}/2}"] & R^{\phi \mathbb{Z}/2} \text{.}
    \end{tikzcd}\]
  Now consider the two paths around the circumference from $R$ to $R^{\phi \mathbb{Z}/2}$.
  \end{proof}

  \subsection{Dold-Kan equivalence}

Let $R$ be a commutative ring. The Dold-Kan equivalence \cite{dold_homology_1958}
is an equivalence between the category of connective chain
complexes of $R$-modules $\mathrm{Ch}_{\ge 0}(R)$ and the category of simplicial
$R$-modules $\mathrm{sMod}_R$.
Let us briefly recall the definitions of these functors, since these
constructions will be an essential part of our calculations and we will need to
reason about the concrete details later on.

  We denote the functor from chain complexes to simplicial modules by
  \[\sigma : \mathrm{Ch}_{\ge 0}(R) \to \mathrm{sMod}_R \mathrm{.}\]
  Let $M_\ast$ be a connective chain complex. Then the simplicial $R$-module
  $\sigma M_\bullet$ is given level-wise by
  \[\sigma M_n = \bigoplus_{[n] \twoheadrightarrow [k]} M_k\]
  where the sum is taken over all surjections out of $[n]$ in the simplex
  category $\Delta$; that is, all surjective weakly increasing maps $[n]
  \twoheadrightarrow [k]$ for $0 \le k \le n$. The simplicial structure is as
  follows. Given a map $[m] \to [n]$ in $\Delta$, we need to produce a map
  $\sigma M_n \to \sigma M_m$. It suffices to describe how to map out of each
  direct summand of $\sigma M_n$, and so for each surjection $[n]
  \twoheadrightarrow [k]$ we will give a map $M_k \to \sigma M_m$. Factor the composite $[m] \to [n]
  \twoheadrightarrow [k]$ as a surjection followed by an injection $[m]
  \twoheadrightarrow [m'] \hookrightarrow [k]$, then we take the map $M_k \to
  M_{m'}$ from the chain complex structure of $M$, and compose it with the
  summand inclusion $M_{m'} \subseteq \sigma M_m$ corresponding to $[m]
  \twoheadrightarrow [m']$.

  The inverse functor from simplicial modules to chain complexes is the normalised
  chain complex functor
  \[N : \mathrm{sMod}_R \to \mathrm{Ch}_{\ge 0}(R) \mathrm{.}\]
  Given a simplicial $R$-module $M_\bullet$, we define $NM_\ast$ as follows: let
  $NA_n \coloneq \bigcap_{i=0}^{n-1} \text{ker}(d_i)$ be the subgroup of $A_n$ that is killed by all the face maps $d_i$ for
  $i < n$, and let the differential $NA_n \to NA_{n-1}$ be given by $(-1)^n d_n$.

\begin{theorem}[Dold-Kan equivalence
  \cite{dold_homology_1958}] \label{thm:dold_kan}%
  The functors $\sigma$ and $N$ give an equivalence of categories
  \[\mathrm{Ch}_{\ge 0}(R) \simeq \mathrm{sMod}_R \text{.}\]
\end{theorem}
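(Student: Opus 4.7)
The plan is to exhibit explicit natural isomorphisms $N\sigma M \cong M$ for every chain complex $M_* \in \mathrm{Ch}_{\ge 0}(R)$ and $\sigma N A \cong A$ for every simplicial module $A_\bullet \in \mathrm{sMod}_R$, then verify naturality and compatibility with the simplicial/chain structure.

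For the first direction, I would fix $M_*$ and directly analyze which summands of $\sigma M_n = \bigoplus_{[n] \twoheadrightarrow [k]} M_k$ survive to $N(\sigma M)_n$. Given a surjection $s : [n] \twoheadrightarrow [k]$ that is not the identity, $s$ necessarily repeats a value at some position $i < n$, so for that particular $i$ the composite $[n-1] \hookrightarrow [n] \xrightarrow{s} [k]$ factors as a surjection followed by the identity, meaning the induced map of $d_i$ sends that summand $M_k$ identically into the $M_k$-summand of $\sigma M_{n-1}$. By a careful accounting of these contributions using the surjection-injection factorization in the construction of $\sigma$, one shows the kernel of all $d_0, \ldots, d_{n-1}$ is precisely the summand $M_n$ indexed by $\mathrm{id}_{[n]}$. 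The remaining face map $d_n$, restricted to this summand and multiplied by $(-1)^n$, is tautologically the original differential of $M$.

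For the second direction, the main ingredient is the Moore decomposition: for any simplicial $R$-module $A$,
\[A_n \cong \bigoplus_{s : [n] \twoheadrightarrow [k]} s^*(NA_k),\]
where $s^*$ denotes the simplicial operator associated to $s$. This gives the required bijection $\sigma NA \to A$ level-wise, and comparing it against the definition of $\sigma$ shows it is simplicial. To establish the decomposition I would induct on $n$: first prove that $A_n = NA_n \oplus D A_n$ where $DA_n$ is the submodule generated by proper degeneracies — this uses the simplicial identities to construct a section of $NA_n \hookrightarrow A_n$ — and then apply the inductive hypothesis to each lower-dimensional piece contributing to $DA_n$, with the surjection indexing tracking precisely which degeneracy operator is applied.

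The main obstacle is really the Moore decomposition: proving both that every element of $A_n$ is a sum of such degenerated normalized pieces, and that this expression is unique, requires a careful inductive bookkeeping with the simplicial identities. Once this is in hand, the rest of the proof — naturality in $M$ and $A$, and functoriality of $N$ and $\sigma$ — is a matter of unwinding the definitions, since both functors are manifestly compatible with $R$-linear maps of their inputs.
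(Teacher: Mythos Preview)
The paper does not actually prove this theorem; it simply cites \cite{goerss_simplicial_2009} Section~III.2 (with a pointer to the errata). Your sketch is precisely the standard argument found there: identify $N\sigma M_*$ with the identity-surjection summand, and establish $\sigma N A \cong A$ via the normalized/degenerate (Moore) decomposition of $A_n$. So your approach is correct and coincides with the reference the paper defers to.

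One small caution on the first direction: the observation that a non-identity surjection $s$ has some $d_i$ acting as the identity on its summand does not by itself force that summand's contribution to vanish in $\bigcap_{i<n}\ker d_i$, since contributions from different summands can cancel in $\sigma M_{n-1}$. The ``careful accounting'' you allude to is genuinely needed here --- one typically filters by the target dimension $k$ of the surjections and argues downward --- and this is exactly the step that the Goerss--Jardine errata corrects, so it is worth spelling out rather than leaving as a remark.
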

\begin{proof}
  See \cite{goerss_simplicial_2009} Section III.2 (and note the correction to
  the proof in the errata).
\end{proof}

  Additionally recall the Moore complex of a simplicial $R$-module. Given a
  simplicial $R$-module $A_\bullet$, the Moore complex is the chain complex
  $A_\ast$ which is given by $A_n$ in degree $n$, and has boundary map $\partial : A_n
  \to A_{n-1}$ given by the alternating sum of the face maps $\Sigma_{i=0}^n
  (-1)^i d_i$. The Moore complex $A_\ast$ and the normalised chain complex
  $NA_\ast$ are naturally homotopy equivalent, and so in practice we can use
  either in situations where we only care about the result up to homotopy.

  \subsection{Real topological Hochschild homology}

  In this subsection we briefly recall the construction of $\mathrm{THR}(A)$ and $\mathrm{TCR}(A; p)$ in the
  special case where $A$ is a discrete commutative ring with trivial involution
  (this is the only case we will need). These constructions were introduced in
  \cite{hesselholt_real_2015}, \cite{dotto_stable_2012},
  \cite{hogenhaven_real_2016}; good sources for
  full details include \cite{dotto_real_2021} and \cite{dotto_geometric_2024}.

  Let $HA$ be a flat model for the Eilenberg-MacLane spectrum of $A$, in
  orthogonal spectra. The cyclic nerve of $HA$ becomes a dihedral spectrum if we
  give it an appropriate involution. We denote this construction $N^{di}HA$.
  In level $n$ we have $(N^{di}HA)_n = HA^{\otimes n+1}$, which we visualise as $n+1$ copies of $A$ tensored
  together in a circle, equipped with rotation and reflection actions. %
   We define the real topological Hochschild
  homology of $A$ to be the geometric realisation
  \[\mathrm{THR}(A) \coloneqq \abs{N^{di}A} \mathrm{.}\]
  This is an orthogonal spectrum with an $O(2)$-action, so we can regard it as a
  genuine $O(2)$-spectrum.
  Note the underlying cyclotomic spectrum of $\mathrm{THR}(A)$ is $\mathrm{THH}(A)$, while
  the underlying genuine
  $\mathbb{Z}/2$-spectrum is %
  \[A \otimes_{N_{e}^{\mathbb{Z}/2}A} A\]
  (where $A$ has $N_{e}^{\mathbb{Z}/2}A$-module structure given by
  $N_{e}^{\mathbb{Z}/2}A \xrightarrow{\mu} A$). In particular we find that the geometric fixed points are %
  \[\mathrm{THR}(A)^{\phi \mathbb{Z}/2} \simeq
    A^{\phi \mathbb{Z}/2} \otimes_A
    A^{\phi \mathbb{Z}/2} \mathrm{.}\]

  Let $p$ be a prime. We define the $p$-typical truncated real topological
  restriction homology of $A$ by
  \[\mathrm{TRR}^{n+1}(A; p) \coloneqq \mathrm{THR}(A)^{C_{p^{n}}} \mathrm{,}\]
  where $C_{p^n} \le D_{p^{n}} \le O(2)$. We can consider this as a
  $\mathbb{Z}/2$-spectrum, since the Weyl group of $C_{p^n}$ inside $D_{p^n}$ is
  $\mathbb{Z}/2$. There is a $\mathbb{Z}/2$-equivariant map $R : \mathrm{TRR}^{n+1}(A; p) \to
  \mathrm{TRR}^{n}(A; p)$ coming from the real cyclotomic structure of
  $\mathrm{THR}(A)$; using this we define the real topological restriction
  homology
  \[\mathrm{TRR}(A; p) \coloneqq \mathrm{holim} \left(\dotsb \xrightarrow{R} \mathrm{TRR}^{n+1}(A;
    p) \xrightarrow{R} \mathrm{TRR}^{n}(A; p) \xrightarrow{R} \dotsb
    \xrightarrow{R} \mathrm{TRR}^{1}(A; p) \right) \mathrm{.}\]
  The inclusion $C_{p^{n-1}} \le C_{p^n}$ induces a $\mathbb{Z}/2$-equivariant restriction map
  \[F : \mathrm{TRR}^{n}(A; p) = \mathrm{THR}(A)^{C_{p^n}} \to
    \mathrm{THR}(A)^{C_{p^{n-1}}} = \mathrm{TRR}^{n-1}(A; p)\mathrm{,}\]
  known as the Frobenius. This in turn induces an endomorphism $F$ on $\mathrm{TRR}(A; p)$. Finally we
  define the real topological cyclic homology of $A$ as the
  $\mathbb{Z}/2$-equivariant spectrum
  \[\mathrm{TCR}(A; p) \coloneqq \mathrm{hoeq}(\mathrm{TRR}(A; p) \,
    \substack{\xrightarrow{F}\\[-0.2em] \xrightarrow[\mathrm{id}]{}} \,
    \mathrm{TRR}(A; p)) \mathrm{.}\]
  This construction of $\mathrm{TCR}$ from $\mathrm{THR}$ is analogous to the
  usual construction of $\mathrm{TC}$ from $\mathrm{THH}$, and indeed the underlying spectrum of $\mathrm{TCR}(A; p)$ is $\mathrm{TC}(A; p)$.

As mentioned in the introduction, we will not work directly with the above
construction of $\mathrm{TCR}$, but instead use the following formula of Dotto,
Moi and Patchkoria.

\begin{theorem}[\cite{dotto_geometric_2024}~2.14] \label{thm:tcr_eq}
  We can describe topological cyclic restriction homology by the following
  homotopy equaliser
\[\mathrm{TCR}(A; 2)^{\phi \mathbb{Z}/2} \simeq \mathrm{hoeq}\left((\mathrm{THR}(A)^{\phi \mathbb{Z}/2})^{C_2} \, \substack{\xrightarrow{f}\\[-0.2em] \xrightarrow[r]{}} \, \mathrm{THR}(A)^{\phi
  \mathbb{Z}/2}\right) \mathrm{.}\]
(see \cite{dotto_geometric_2024} for descriptions of the maps, and we will later
give an explicit description in the case $A = H\mathbb{Z}/4$).
\end{theorem}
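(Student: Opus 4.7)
The plan is to start from the equaliser description $\mathrm{TCR}(A; 2) \simeq \mathrm{hoeq}(F, \mathrm{id}: \mathrm{TRR}(A; 2) \rightrightarrows \mathrm{TRR}(A; 2))$ recalled just above the statement and apply the functor $(-)^{\phi \mathbb{Z}/2}$. Because geometric fixed points commute with finite homotopy limits, and in particular with homotopy equalisers, the main task reduces to identifying $\mathrm{TRR}(A; 2)^{\phi \mathbb{Z}/2}$ together with the endomorphisms induced by $F$ and the identity.

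To compute $\mathrm{TRR}(A; 2)^{\phi \mathbb{Z}/2}$ I would analyse the defining tower $\dotsb \xrightarrow{R} \mathrm{THR}(A)^{C_{2^n}} \xrightarrow{R} \dotsb \xrightarrow{R} \mathrm{THR}(A)$ level by level after applying $(-)^{\phi \mathbb{Z}/2}$, where each term is a $\mathbb{Z}/2$-spectrum via the reflection in the Weyl group $O(2)/C_{2^n}$. Two inputs drive the calculation. First, the real cyclotomic structure equips $\mathrm{THR}(A)$ with an equivalence $\mathrm{THR}(A)^{\phi C_2} \simeq \mathrm{THR}(A)$ of $O(2)$-spectra, using the identification $O(2)/C_2 \cong O(2)$. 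Second, inside $O(2)$ the order-two rotation subgroup $C_2$ commutes with a chosen reflection $\mathbb{Z}/2$, generating the Klein four-group $D_4$, so genuine $C_2$-fixed points commute with geometric $\mathbb{Z}/2$-fixed points for this pair. Combining these inputs with an isotropy separation analysis on each level should yield
\[ \mathrm{TRR}(A; 2)^{\phi \mathbb{Z}/2} \simeq \left( \mathrm{THR}(A)^{\phi \mathbb{Z}/2} \right)^{C_2} \mathrm{.} \]

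With this identification in hand, the equaliser of $F^{\phi \mathbb{Z}/2}$ and $\mathrm{id}$ on $\mathrm{TRR}(A; 2)^{\phi \mathbb{Z}/2}$ can then be repackaged into a homotopy equaliser of two maps $f, r: (\mathrm{THR}(A)^{\phi \mathbb{Z}/2})^{C_2} \rightrightarrows \mathrm{THR}(A)^{\phi \mathbb{Z}/2}$. The map $r$ should be the canonical inclusion of $C_2$-fixed points into the underlying spectrum, arising from the identity endomorphism of the tower, while $f$ should be a Frobenius-type map built from $F$ together with the real cyclotomic equivalence.

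The main obstacle is controlling the interaction of $(-)^{\phi \mathbb{Z}/2}$ with the infinite homotopy limit defining $\mathrm{TRR}(A; 2)$, since geometric fixed points do not commute with inverse limits in general. Overcoming this requires exploiting the real cyclotomic Frobenius together with a careful isotropy separation analysis of the reflection $\mathbb{Z}/2$-action on each level of the tower in order to produce the desired collapse. Verifying that the two induced maps genuinely match the prescribed $f$ and $r$ then requires unwinding the tower structure explicitly through the identifications.
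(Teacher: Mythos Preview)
The paper does not prove this theorem; it is quoted from \cite{dotto_geometric_2024}~Theorem~2.14, and the statement explicitly directs the reader there for the proof and for the description of the maps $f$ and $r$. There is therefore no proof in the present paper to compare your proposal against.

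Your sketch has the right starting point and you correctly flag the main obstacle (geometric fixed points need not commute with the sequential limit defining $\mathrm{TRR}$), but it is incomplete on the decisive step. Even granting an identification $\mathrm{TRR}(A;2)^{\phi\mathbb{Z}/2}\simeq(\mathrm{THR}(A)^{\phi\mathbb{Z}/2})^{C_2}$, applying $({-})^{\phi\mathbb{Z}/2}$ to $\mathrm{hoeq}(F,\mathrm{id})$ produces an equaliser of two \emph{endomorphisms} of $(\mathrm{THR}(A)^{\phi\mathbb{Z}/2})^{C_2}$, whereas the stated formula is an equaliser of two maps with \emph{distinct} source and target. You assert this can be ``repackaged'' but supply no mechanism; this repackaging is precisely where the work lies, and in \cite{dotto_geometric_2024} it comes from a finer analysis of the tower $\mathrm{TRR}^{n}(A;2)^{\phi\mathbb{Z}/2}$ rather than a direct transport of $F$ and $\mathrm{id}$. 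Note also, comparing with Corollary~\ref{cor:r_f_map_def}, that $f$ is the restriction map $\mathrm{res}^{C_2}_e$ (the map from genuine $C_2$-fixed points to the underlying spectrum) while $r$ is the map built from the canonical map to geometric fixed points together with the cyclotomic splitting, so your assignment of which map is the ``inclusion of fixed points'' is reversed relative to the paper's conventions.
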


See also Appendix~A of \cite{quigley_shah_equivalence_2022}, which gives a
similar description of the geometric fixed points of $\mathrm{TCR}$ as an equalizer.

\section{Low degree homotopy of \texorpdfstring{$\mathrm{TCR}(\mathbb{Z}/4)^{\phi \mathbb{Z}/2}$}{TCR(Z/4)\^{}\{phi Z/2\}}}

In this section we compute the low degree terms of the long exact sequence in homotopy
groups corresponding to the equaliser formula of Theorem~\ref{thm:tcr_eq}, and
make some progress solving the resulting extension problems. As a result we
compute $\pi_i(\mathrm{TCR}(\mathbb{Z}/4)^{\phi \mathbb{Z}/2})$ for $i \le 1$ and
obtain short exact sequences for these homotopy groups for $2 \le i \le 5$.

\subsection{Decomposition of the equaliser formula for \texorpdfstring{$\mathrm{TCR}(\mathbb{Z}/4)^{\phi \mathbb{Z}/2}$}{TCR(Z/4)\^{}\{phi Z/2\}}} \label{sec:decomposition}

In order to compute the homotopy groups of the terms in the equaliser, we want to apply Lemma~4.3 of \cite{dotto_geometric_2024}
to decompose the copies of $\mathrm{THR}(\mathbb{Z}/4)^{\phi \mathbb{Z}/2}$. But to
satisfy the conditions of the lemma we first need to show that
$H\mathbb{Z}/4^{\phi \mathbb{Z}/2}$, endowed with the Frobenius $H\mathbb{Z}/4$-module structure,
splits as the direct sum of its homotopy groups.

\begin{lemma} \label{lem:ab_group_geo_fixed}
  For an abelian group $A$ we have
  \[\pi_i(HA^{\phi \mathbb{Z}/2}) \cong \begin{cases}
    \mathrm{coker}(A \xrightarrow{2} A) \quad &\text{$i \ge 0$, $i$ even,}\\
    \mathrm{ker}(A \xrightarrow{2} A) \quad &\text{$i \ge 0$, $i$ odd,}\\
    0 \quad &\text{$i < 0$.}\end{cases}\]
\end{lemma}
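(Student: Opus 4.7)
The plan is to derive the result from the isotropy-separation cofibre sequence of $\mathbb{Z}/2$-spectra
\[E\mathbb{Z}/2_+ \wedge HA \to HA \to \widetilde{E}\mathbb{Z}/2 \wedge HA \text{,}\]
apply $(-)^{\mathbb{Z}/2}$, and read off $\pi_\ast(HA^{\phi \mathbb{Z}/2})$ from the resulting long exact sequence. The level-wise model $HA_n = A[S^n]$ with trivial involution realises the constant Mackey functor $\underline{A}$ concentrated in degree $0$, so $HA^{\mathbb{Z}/2} \simeq HA$ has $\pi_0 = A$ and vanishes in all other degrees. By the Adams isomorphism applied to the free $\mathbb{Z}/2$-spectrum $E\mathbb{Z}/2_+ \wedge HA$, the first term identifies with the underlying spectrum $HA \wedge B\mathbb{Z}/2_+$, whose homotopy groups are the group homology $H_\ast(\mathbb{Z}/2; A)$ with trivial $A$-coefficients. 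The standard periodic resolution of $\mathbb{Z}$ over $\mathbb{Z}[\mathbb{Z}/2]$ evaluates this to $A$ in degree $0$, $\mathrm{coker}(2 \colon A \to A)$ in odd positive degrees, and $\ker(2 \colon A \to A)$ in even positive degrees.

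The delicate point, and the one I expect to need most care, is identifying the connecting map $\pi_0((E\mathbb{Z}/2_+ \wedge HA)^{\mathbb{Z}/2}) = A \to \pi_0(HA^{\mathbb{Z}/2}) = A$: tracing through the Adams isomorphism shows this becomes the transfer of the constant Mackey functor $\underline{A}$, which is multiplication by $\lvert \mathbb{Z}/2 \rvert = 2$. The corresponding maps in higher degrees are forced to be zero since $\pi_i(HA^{\mathbb{Z}/2}) = 0$ for $i \neq 0$.

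Feeding this into the long exact sequence produces the claimed groups: for $i \ge 2$ we get $\pi_i(HA^{\phi \mathbb{Z}/2}) \cong H_{i-1}(\mathbb{Z}/2; A)$, which is $\ker(2)$ for $i$ odd and $\mathrm{coker}(2)$ for $i$ even; for $i = 1$ and $i = 0$ the sequence collapses around the map $A \xrightarrow{2} A$ to give $\ker(2)$ and $\mathrm{coker}(2)$ respectively; and for $i < 0$ all adjacent terms in the long exact sequence vanish, forcing $\pi_i(HA^{\phi \mathbb{Z}/2}) = 0$.
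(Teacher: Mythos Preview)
Your proof is correct and follows essentially the same approach as the paper: both use the norm/isotropy-separation cofibre sequence $HA_{h\mathbb{Z}/2} \to HA^{\mathbb{Z}/2} \to HA^{\phi\mathbb{Z}/2}$, identify $\pi_*(HA_{h\mathbb{Z}/2})$ with $H_*(\mathbb{Z}/2;A)$, and recognise the map on $\pi_0$ as multiplication by~$2$. You are slightly more explicit in invoking the Adams isomorphism and naming the transfer, whereas the paper writes the sequence $HA_{h\mathbb{Z}/2}\to HA\to HA^{\phi\mathbb{Z}/2}$ directly and simply asserts that the $\pi_0$ map is multiplication by~$2$; but the underlying argument is the same.
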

\begin{proof}
  The spectrum $HA$ is connective, so its geometric fixed points are
  too. Now consider the cofibre sequence
  \[HA_{h\mathbb{Z}/2} \to HA \to HA^{\phi
      \mathbb{Z}/2} \text{.}\]
  Since $HA$ is concentrated in degree $0$, the long exact sequence in homotopy groups gives us an exact sequence
  \begin{equation}\label{eq:ha_geo_fixed_exact} 0 \to \pi_1(HA^{\phi \mathbb{Z}/2}) \to
    \pi_0(HA_{h \mathbb{Z}/2}) \to \pi_0(HA) \to
    \pi_0(HA^{\phi \mathbb{Z}/2}) \to 0\end{equation}
  and isomorphisms
  \[\pi_i(HA^{\phi \mathbb{Z}/2}) \cong \pi_{i-1}(HA_{h \mathbb{Z}/2})\]
  for $i \ge 2$.

  But we have
  \[\pi_i(HA_{h\mathbb{Z}/2}) \cong H_i(\mathbb{RP}^{\infty}; A)
    \cong \begin{cases}A \quad &\text{$i = 0$,}\\\text{coker}(A \xrightarrow{2}
      A) \quad &\text{$i > 0$, $i$ odd,}\\\text{ker}(A \xrightarrow{2} A) \quad
      &\text{$i > 0$, $i$ even.}\end{cases}\]
  And the map $A \cong \pi_0(HA_{h \mathbb{Z}/2}) \to \pi_0(HA) \cong A$
  appearing in the exact sequence (\ref{eq:ha_geo_fixed_exact}) is
  the multiplication map $A \xrightarrow{2} A$. The result follows.
\end{proof}

\begin{lemma} \label{lem:z4_phiz2_split}
  We have a splitting of $H\mathbb{Z}/4$-modules
\[H\mathbb{Z}/4^{\phi \mathbb{Z}/2} \simeq \bigoplus_{n \ge 0} \Sigma^n H\pi_n (H\mathbb{Z}/4^{\phi
    \mathbb{Z}/2}) \cong \bigoplus_{n \ge 0} \Sigma^n H\mathbb{Z}/2 \mathrm{.}\]
  where we consider $H\mathbb{Z}/4^{\phi \mathbb{Z}/2}$ as an
  $H\mathbb{Z}/4$-module via the Frobenius
  module structure of Definition~\ref{def:frobenius}  and consider $H\mathbb{Z}/2$ as an
  $H\mathbb{Z}/4$-module via the quotient map
  $H\mathbb{Z}/4 \to H\mathbb{Z}/2$.
\end{lemma}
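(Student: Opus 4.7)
Plan. By Lemma~\ref{lem:ab_group_geo_fixed} applied to $A=\mathbb{Z}/4$, both $\ker(\cdot 2)$ and $\mathrm{coker}(\cdot 2)$ on $\mathbb{Z}/4$ equal $\mathbb{Z}/2$, so $\pi_i(H\mathbb{Z}/4^{\phi\mathbb{Z}/2})\cong\mathbb{Z}/2$ for every $i\ge 0$ and vanishes otherwise, matching the homotopy groups of $\bigoplus_{n\ge 0}\Sigma^n H\mathbb{Z}/2$. Hence the work lies in realising this abstract identification as an equivalence of $H\mathbb{Z}/4$-modules.

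Write $M := H\mathbb{Z}/4^{\phi\mathbb{Z}/2}$ and let $\phi:H\mathbb{Z}/4\to M$ be the Frobenius of Definition~\ref{def:frobenius}. Because both $\Delta$ and $\mu^{\phi\mathbb{Z}/2}$ are multiplicative, $\phi$ is a map of commutative ring spectra, and on $\pi_0$ it is a ring homomorphism $\mathbb{Z}/4\to\pi_0 M=\mathbb{Z}/2$, which must be the mod-$2$ quotient; in particular $\phi(2)=0$. Since the $H\mathbb{Z}/4$-module structure on $M$ is given by $\phi$ composed with the multiplication of $M$, the ``multiplication by $2$'' endomorphism of $M$ as an $H\mathbb{Z}/4$-module equals multiplication by $\phi(2)=0\in\pi_0 M$, so is nullhomotopic. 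Crucially, this nullhomotopy is supplied coherently by the $\mathbb{E}_\infty$-ring structure of $M$.

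I next promote the coherent triviality of $2$ on $M$ to an $H\mathbb{Z}/2$-module structure on $M$, where $H\mathbb{Z}/2=H\mathbb{Z}/4\,/\!\!/\, 2$ is the $\mathbb{E}_\infty$-quotient. Since $\mathbb{Z}/2$ is a field, every $H\mathbb{Z}/2$-module splits canonically as $\bigoplus_n \Sigma^n H(\pi_n)$. Applied to $M$ this produces an equivalence $M\simeq \bigoplus_{n\ge 0}\Sigma^n H\mathbb{Z}/2$ of $H\mathbb{Z}/2$-modules, which after restriction of scalars along $H\mathbb{Z}/4\to H\mathbb{Z}/2$ is the desired splitting of $H\mathbb{Z}/4$-modules.

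The main technical obstacle is the promotion of $M$ to an $H\mathbb{Z}/2$-module: bare nullity of the map $2:M\to M$ is not enough in general, as already shown by the cofibre $C=\mathrm{cofib}(H\mathbb{Z}/4\xrightarrow{2}H\mathbb{Z}/4)$, on which $2$ is nullhomotopic (one finds a chain-homotopy by hand) yet which represents the nontrivial Bockstein class in $\mathrm{Ext}^2_{\mathbb{Z}/4}(\mathbb{Z}/2,\mathbb{Z}/2)$ and so does not split into its homotopy groups. What rescues the argument for $M$ is the full $\mathbb{E}_\infty$-ring structure: the identity $\phi(2)=0$ in the commutative ring $\pi_0 M$ furnishes all the higher coherences needed to descend the module structure along $H\mathbb{Z}/4\to H\mathbb{Z}/2$, and I expect this descent to be the delicate step of the proof.
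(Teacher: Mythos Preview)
Your overall strategy coincides with the paper's: show that the Frobenius $\phi:H\mathbb{Z}/4\to M$ factors as a ring map through $H\mathbb{Z}/2$, so that the $H\mathbb{Z}/4$-module structure on $M$ is pulled back from an $H\mathbb{Z}/2$-module structure and the splitting follows because $\mathbb{Z}/2$ is a field. You also correctly identify the subtlety, and your counterexample with the cofibre of $2$ on $H\mathbb{Z}/4$ is exactly the right cautionary tale.

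However, the step you flag as delicate is not merely delicate---as written it contains a false claim. You assert $H\mathbb{Z}/2 = H\mathbb{Z}/4\,/\!\!/\,2$ as an $\mathbb{E}_\infty$-quotient, and then invoke its universal property. But $2\in\mathbb{Z}/4$ is a zero-divisor, so already the underlying derived quotient $\mathbb{Z}/4\otimes^{\mathbb L}_{\mathbb{Z}/4[x]}\mathbb{Z}/4$ (with $x\mapsto 2$) has $\pi_1\cong\mathbb{Z}/2$; the $\mathbb{E}_\infty$-quotient $H\mathbb{Z}/4\,/\!\!/\,2$ is not $H\mathbb{Z}/2$. Consequently, knowing only that $\phi(2)=0$ in $\pi_0 M$ yields a factorisation through $H\mathbb{Z}/4\,/\!\!/\,2$, not through $H\mathbb{Z}/2$, and the vague appeal to ``all the higher coherences supplied by the $\mathbb{E}_\infty$-structure'' does not bridge that gap.

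The paper closes this gap by a point-set argument rather than a homotopical universal property. Using the underived norm description of the Frobenius (Lemma~\ref{lem:frob_non_derived}) and naturality with respect to the non-unital ring inclusion $I=2\mathbb{Z}/4\hookrightarrow\mathbb{Z}/4$, one sees that the composite $HI\hookrightarrow H\mathbb{Z}/4\xrightarrow{\phi} M$ factors through the multiplication $\mu:HI\otimes HI\to HI$; since the ring multiplication on $I$ is identically zero, this composite is \emph{strictly} zero, not just nullhomotopic. One then invokes Hovey's Smith ideal machinery (\cite{hovey_smith_2014}, Theorem~1.4): a map of Smith ideals $(HI\to H\mathbb{Z}/4)\to(0\to M)$ induces, on cofibres, a ring map $H\mathbb{Z}/2\to M$ under $H\mathbb{Z}/4$, which is precisely the factorisation you were after. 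The moral is that the coherence you need comes not from abstract $\mathbb{E}_\infty$-structure but from an on-the-nose vanishing available in a specific point-set model.
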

\begin{proof}
  First note that by Lemma~\ref{lem:ab_group_geo_fixed} we indeed have $\pi_n(H\mathbb{Z}/4^{\phi \mathbb{Z}/2}) \cong
  \mathbb{Z}/2$ for $n \ge 0$.

  Recall Lemma~\ref{lem:frob_non_derived}, which says that the Frobenius module structure on $H\mathbb{Z}/4^{\phi
    \mathbb{Z}/2}$ can be defined by the map
  \begin{equation}\label{eq:frob_structure}\varphi : H\mathbb{Z}/4 \xrightarrow{\Delta} (\tilde{N}^{\mathbb{Z}/2}_{\{e\}}
    (H\mathbb{Z}/4))^{\phi \mathbb{Z}/2} \xrightarrow{\mu^{\phi \mathbb{Z}/2}} H\mathbb{Z}/4^{\phi \mathbb{Z}/2} \mathrm{.}\end{equation}
  We claim that $\varphi$ factors along the quotient $H\mathbb{Z}/4
  \twoheadrightarrow H\mathbb{Z}/2$. This then implies the lemma, since it means
  the Frobenius $H\mathbb{Z}/4$-module structure is the pullback of a $H\mathbb{Z}/2$-module
  structure, and $\mathbb{Z}/2$ is a field so all $H\mathbb{Z}/2$-modules split uniquely as the sum of their homotopy
  groups. %

  Let the ideal $I = 2 \mathbb{Z}/4$ be the kernel of the quotient $\mathbb{Z}/4
  \twoheadrightarrow \mathbb{Z}/2$. If we were just working with discrete rings,
  then to show that a ring homomorphism out of
  $\mathbb{Z}/4$ factors through the quotient $\mathbb{Z}/2$ it would suffice to
  show that the homomorphism is zero on $I$. We will use the technology of Hovey \cite{hovey_smith_2014} to apply a
  similar argument to ring spectra.

  Considering $I$ as a non-unital ring spectrum, naturality with respect to $I
  \hookrightarrow \mathbb{Z}/4$ gives a commutative diagram
  \[\begin{tikzcd}
      HI \ar[r, "\Delta"] \ar[d] & (\tilde{N}^{\mathbb{Z}/2}_{\{e\}}
      (HI))^{\phi \mathbb{Z}/2} \ar[r, "\mu^{\phi \mathbb{Z}/2}"] \ar[d] & HI^{\phi \mathbb{Z}/2} \ar[d]\\
      H\mathbb{Z}/4 \ar[r, "\Delta"] & (\tilde{N}^{\mathbb{Z}/2}_{\{e\}}
    (H\mathbb{Z}/4))^{\phi \mathbb{Z}/2} \ar[r, "\mu^{\phi \mathbb{Z}/2}"] &
    H\mathbb{Z}/4^{\phi \mathbb{Z}/2} \mathrm{.}
  \end{tikzcd}\]
The multiplication map $\tilde{N}^{\mathbb{Z}/2}_{\{e\}} (HI) \xrightarrow{\mu}
HI$ can be expressed as
\[\tilde{N}^{\mathbb{Z}/2}_{\{e\}} (HI) \simeq (HI) \otimes_{\mathbb{S}} (HI) \to H(I \otimes I) \xrightarrow{H(\nu)} HI\]
where the first map is an equivalence of underlying spectra, the middle map is
the canonical map from lax monoidality of $H$, and $\nu : I \otimes I \to I$ is the multiplication map on $I$. But the
ring multiplication of $I$ is zero, so we conclude $\mu$ is also the zero map. Taking the geometric fixed points of the zero map gives the zero map, so in the
top row $\mu^{\phi \mathbb{Z}/2} = 0$. By commutativity of the diagram we see that the composite
$HI \hookrightarrow H\mathbb{Z}/4 \xrightarrow{\varphi} H \mathbb{Z}/4^{\phi \mathbb{Z}/2}$ is
zero (on the nose, not just null-homotopic).

  Now we apply Theorem~1.4 of \cite{hovey_smith_2014}. We have shown there is a map of Smith
  ideals
  \[\begin{tikzcd}
      HI \ar[r] \ar[d] & 0 \ar[d]\\
      H\mathbb{Z}/4 \ar[r, "\varphi"] & (H\mathbb{Z}/4)^{\phi \mathbb{Z}/2}
    \end{tikzcd}\]
  and so taking cofibres we get a commutative square of ring spectra
  \[\begin{tikzcd}
      H\mathbb{Z}/4 \ar[r, "\varphi"] \ar[d, two heads] & (H\mathbb{Z}/4)^{\phi \mathbb{Z}/2} \ar[d, equal]\\
      H\mathbb{Z}/2 \ar[r] & (H\mathbb{Z}/4)^{\phi \mathbb{Z}/2} \mathrm{,}
    \end{tikzcd}\]
  giving the desired factorisation.
\end{proof}

Now by Lemma~4.3 of \cite{dotto_geometric_2024} we get decompositions of $\mathrm{THR}(\mathbb{Z}/4)^{\phi \mathbb{Z}/2}$ and
$(\mathrm{THR}(\mathbb{Z}/4)^{\phi \mathbb{Z}/2})^{C_2}$.

\begin{corollary}\label{cor:decomp}
  We have an equivalence of $C_2$-spectra
\begin{align*}\mathrm{THR}(\mathbb{Z}/4)^{\phi \mathbb{Z}/2} \simeq &\bigoplus_{n \ge 0} \Sigma^{n \rho}
  H\mathbb{Z}/4 \otimes_{N_{\{e\}}^{C_2} (H\mathbb{Z}/4)} N_{\{e\}}^{C_2}(H\mathbb{Z}/2) \\
  & \oplus \bigoplus_{0 \le n < m} \Sigma^{n+m}(C_2)_{+} \otimes_{\mathbb{S}} H\mathbb{Z}/2
    \otimes_{H\mathbb{Z}/4} H\mathbb{Z}/2\end{align*}
  (where $\rho$ is the regular representation of $C_2$ and $(C_2)_{+}
  \otimes_{\mathbb{S}} ({-})$ denotes the left adjoint of the restriction
  functor from $C_2$-spectra to spectra) and an equivalence of spectra
  \begin{align*}
    (\mathrm{THR}(\mathbb{Z}/4)^{\phi \mathbb{Z}/2})^{C_2} \simeq &\bigoplus_{n \ge 0} \left( \Sigma^{n \rho}H\mathbb{Z}/4 \otimes_{N_{\{e\}}^{C_2}(H\mathbb{Z}/4)} N_{\{e\}}^{C_2}(H\mathbb{Z}/2) \right)^{C_2} \\
    & \oplus \bigoplus_{0 \le n < m} \Sigma^{n+m} H\mathbb{Z}/2 \otimes_{H\mathbb{Z}/4} H\mathbb{Z}/2 \mathrm{.}\end{align*}
  Note that we are using the $N_{\{e\}}^{C_2}(H\mathbb{Z}/4)$-module structure
  on $H\mathbb{Z}/4$ defined by the multiplication map $\mu :
  N_{\{e\}}^{C_2}(H\mathbb{Z}/4) \to H\mathbb{Z}/4$. \qed
\end{corollary}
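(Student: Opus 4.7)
The plan is to substitute the splitting
\[H\mathbb{Z}/4^{\phi \mathbb{Z}/2} \simeq \bigoplus_{n \ge 0} \Sigma^n H\mathbb{Z}/2\]
of Lemma~\ref{lem:z4_phiz2_split} into the equivalence
\[\mathrm{THR}(A)^{\phi \mathbb{Z}/2} \simeq A \otimes_{N^{C_2}_{\{e\}} A} N^{C_2}_{\{e\}}(A^{\phi \mathbb{Z}/2})\]
recalled from the introduction (with $A = H\mathbb{Z}/4$), and then to read off the decomposition of the norm of a direct sum by invoking Lemma~4.3 of \cite{dotto_geometric_2024}. That lemma is exactly designed for this situation: given a splitting of $A^{\phi \mathbb{Z}/2}$ as an $A$-module, it identifies the resulting base-changed norm with a direct sum of diagonal and cross summands of a prescribed form.

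Concretely, Lemma~4.3 of \cite{dotto_geometric_2024} decomposes $A \otimes_{N^{C_2}_{\{e\}} A} N^{C_2}_{\{e\}}\big(\bigoplus_n M_n\big)$ into diagonal summands $A \otimes_{N^{C_2}_{\{e\}} A} N^{C_2}_{\{e\}}(M_n)$, together with cross terms of the form $(C_2)_+ \otimes_{\mathbb{S}} (M_n \otimes_A M_m)$ for each unordered pair $\{n,m\}$ with $n \ne m$ (these being induced along $\{e\} \hookrightarrow C_2$). Setting $M_n = \Sigma^n H\mathbb{Z}/2$, the identity $N^{C_2}_{\{e\}}(\Sigma^n Y) \simeq \Sigma^{n\rho} N^{C_2}_{\{e\}}(Y)$ turns each diagonal summand into $\Sigma^{n\rho} H\mathbb{Z}/4 \otimes_{N^{C_2}_{\{e\}}(H\mathbb{Z}/4)} N^{C_2}_{\{e\}}(H\mathbb{Z}/2)$, reproducing the first family in the statement, while the cross terms become $\Sigma^{n+m} (C_2)_+ \otimes_{\mathbb{S}} H\mathbb{Z}/2 \otimes_{H\mathbb{Z}/4} H\mathbb{Z}/2$, reproducing the second.

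For the $C_2$-fixed point formula I would apply $({-})^{C_2}$ summand-wise to the decomposition just obtained. The diagonal pieces contribute the first family unchanged; for each cross term, the equivalence $((C_2)_+ \otimes_{\mathbb{S}} X)^{C_2} \simeq X$, expressing that taking $C_2$-fixed points of an induced $C_2$-spectrum recovers the underlying spectrum, eliminates the $(C_2)_+$ factor and leaves the second family in the displayed formula. The conceptual content has already been handled by Lemma~\ref{lem:z4_phiz2_split}, so the only real care required here is bookkeeping: tracking suspensions by integers versus by the regular representation $\rho$, and verifying that the $N^{C_2}_{\{e\}}(H\mathbb{Z}/4)$-module structures match up when pulling out the cross terms. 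This is routine once Lemma~4.3 of \cite{dotto_geometric_2024} is in hand.
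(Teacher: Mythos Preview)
Your proposal is correct and follows exactly the paper's approach: the paper states this corollary immediately after Lemma~\ref{lem:z4_phiz2_split} with the sentence ``Now by Lemma~4.3 of \cite{dotto_geometric_2024} we get decompositions\ldots'' and ends the statement with a \qedsymbol, giving no further argument. Your write-up simply unpacks what that citation means, and the details you supply (the identity $N^{C_2}_{\{e\}}(\Sigma^n Y)\simeq\Sigma^{n\rho}N^{C_2}_{\{e\}}(Y)$ for the diagonal terms and $((C_2)_+\otimes_{\mathbb{S}} X)^{C_2}\simeq X$ for the cross terms) are the right ones.
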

\begin{remark}\label{rem:thr_underlying_decomp}
Observe that on underlying spectra the first equivalence gives
\begin{equation*} \mathrm{THR}(\mathbb{Z}/4)^{\phi \mathbb{Z}/2} \simeq \bigoplus_{n, m \ge 0}
  \Sigma^{n+m} H\mathbb{Z}/2 \otimes_{H \mathbb{Z}/4} H\mathbb{Z}/2 \mathrm{.}\end{equation*}
\end{remark}

Again by \cite{dotto_geometric_2024} Lemma~4.3, we get descriptions in
terms of these decompositions for the two maps $f, r :
(\mathrm{THR}(\mathbb{Z}/4)^{\phi \mathbb{Z}/2})^{C_2} \to \mathrm{THR}(\mathbb{Z}/4)^{\phi \mathbb{Z}/2}$
whose homotopy equaliser computes $\mathrm{TCR}(\mathbb{Z}/4)^{\phi \mathbb{Z}/2}$.

\begin{corollary}\label{cor:r_f_map_def}
  The map $f : (\mathrm{THR}(\mathbb{Z}/4)^{\phi \mathbb{Z}/2})^{C_2} \to
  \mathrm{THR}(\mathbb{Z}/4)^{\phi \mathbb{Z}/2}$ splits up according to the
  direct sum decomposition of Corollary~\ref{cor:decomp}. It is defined on the
  $n$-indexed summands by
  restriction maps
  \[\mathrm{res}_e^{C_2} : \left( \Sigma^{n \rho}H\mathbb{Z}/4
    \otimes_{N_{\{e\}}^{C_2}(H\mathbb{Z}/4)} N_{\{e\}}^{C_2}(H\mathbb{Z}/2) \right)^{C_2}
  \to \Sigma^{n \rho}H\mathbb{Z}/4 \otimes_{N_{\{e\}}^{C_2}(H\mathbb{Z}/4)}
  N_{\{e\}}^{C_2}(H\mathbb{Z}/2)\mathrm{,}\]
and on the $(n, m)$-indexed summands by diagonal maps
\[\Delta : \Sigma^{n+m} H\mathbb{Z}/2 \otimes_{H\mathbb{Z}/4} H\mathbb{Z}/2 \to \Sigma^{n+m}(C_2)_{+} \otimes_{\mathbb{S}} H\mathbb{Z}/2
  \otimes_{H\mathbb{Z}/4} H\mathbb{Z}/2 \mathrm{.}\]

  The map $r$ kills the $(n, m)$-summands of
  $(\mathrm{THR}(\mathbb{Z}/4)^{\phi \mathbb{Z}/2})^{C_2}$, and is defined on the
  $n$-summands by
  \begin{equation*}\begin{aligned}\left(\Sigma^{n \rho}H\mathbb{Z}/4 \otimes_{N_{\{e\}}^{C_2}(H\mathbb{Z}/4)}
    N_{\{e\}}^{C_2}(H\mathbb{Z}/2)\right)^{C_2} & \to \left(\Sigma^{n \rho} H\mathbb{Z}/4
    \otimes_{N_{\{e\}}^{C_2}(H\mathbb{Z}/4)} N_{\{e\}}^{C_2}(H\mathbb{Z}/2)\right)^{\phi C_2}\\
    & \simeq \Sigma^n (H\mathbb{Z}/4)^{\phi C_2} \otimes_{H \mathbb{Z}/4} H\mathbb{Z}/2\\
    & \simeq \bigoplus_{m \ge 0}
      \Sigma^{n+m} H\mathbb{Z}/2 \otimes_{H\mathbb{Z}/4} H\mathbb{Z}/2\\
      & \to \mathrm{THR}(\mathbb{Z}/4)^{\phi \mathbb{Z}/2} \end{aligned}\end{equation*}
    where the first map is the canonical map from genuine to geometric fixed
    points, the second map comes from monoidality of geometric fixed points, the
    third map is the splitting of Lemma~\ref{lem:z4_phiz2_split}, and
    the final map is the inclusion of a direct summand according to the
    decomposition of Remark~\ref{rem:thr_underlying_decomp}. \qed
  \end{corollary}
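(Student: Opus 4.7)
The plan is essentially to re-apply \cite{dotto_geometric_2024} Lemma~4.3, specialising its general description of the maps $f$ and $r$ to the case $A = H\mathbb{Z}/4$ equipped with the splitting of $H\mathbb{Z}/4^{\phi \mathbb{Z}/2}$ established in Lemma~\ref{lem:z4_phiz2_split}. That lemma already produced the decompositions in Corollary~\ref{cor:decomp}, and it simultaneously describes $f$ and $r$ relative to such a splitting; so the task reduces to reading off the formulas and matching them against the claimed expressions.

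For $f$, I would use that conceptually it is the restriction of genuine $C_2$-fixed points to the underlying spectrum. On the diagonal summands $\Sigma^{n\rho}H\mathbb{Z}/4 \otimes_{N_{\{e\}}^{C_2}H\mathbb{Z}/4} N_{\{e\}}^{C_2}(H\mathbb{Z}/2)$, which are themselves genuinely $C_2$-equivariant, $f$ therefore acts by the canonical restriction $\mathrm{res}_e^{C_2}$. For the off-diagonal indices $(n,m)$ with $n \neq m$, the swap involution pairs these summands up into induced $C_2$-spectra of the form $(C_2)_+ \otimes_{\mathbb{S}} Y$ with $Y = \Sigma^{n+m}H\mathbb{Z}/2 \otimes_{H\mathbb{Z}/4} H\mathbb{Z}/2$; taking $C_2$-fixed points picks out one copy of $Y$ per pair $\{(n,m),(m,n)\}$ with $n < m$, and the restriction back to the underlying spectrum is precisely the diagonal inclusion $Y \to (C_2)_+ \otimes_{\mathbb{S}} Y$, matching the claimed formula.

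For $r$ I would use that it is the canonical comparison from genuine to geometric $C_2$-fixed points, composed with the identification of the target coming from the real cyclotomic structure. Since induced $C_2$-spectra have contractible geometric fixed points, this immediately gives that $r$ vanishes on the off-diagonal summands. On a diagonal summand, monoidality of $(-)^{\phi C_2}$ together with the natural identification $(N_{\{e\}}^{C_2}H\mathbb{Z}/2)^{\phi C_2} \simeq H\mathbb{Z}/2$ yields the equivalence with $\Sigma^n(H\mathbb{Z}/4)^{\phi C_2} \otimes_{H\mathbb{Z}/4} H\mathbb{Z}/2$ stated in the corollary; substituting the splitting of Lemma~\ref{lem:z4_phiz2_split} for $(H\mathbb{Z}/4)^{\phi C_2}$ then produces the sum over $m$. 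The main difficulty here is really just bookkeeping --- keeping the indices and $C_2$-actions straight --- which is exactly what \cite{dotto_geometric_2024} Lemma~4.3 is engineered to handle, so there is no substantive new obstacle beyond invoking the reference.
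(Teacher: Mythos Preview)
Your proposal is correct and matches the paper's approach exactly: the paper simply records this corollary with a \qed, noting just beforehand that it follows by another application of \cite{dotto_geometric_2024} Lemma~4.3. Your explanation of how the restriction and diagonal maps arise for $f$, and why $r$ kills induced summands while factoring through geometric fixed points on the diagonal ones, is a faithful (and somewhat more detailed) unpacking of that citation.
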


\subsection{Homotopy groups  of \texorpdfstring{$\mathrm{THR}(\mathbb{Z}/4)^{\phi \mathbb{Z}/2}$}{THR(Z/4)\^{}\{phi Z/2\}} and
  \texorpdfstring{$(\mathrm{THR}(\mathbb{Z}/4)^{\phi
      \mathbb{Z}/2})^{C_2}$}{(THR(Z/4)\^{}\{phi Z/4\})\^{}\{C\_2\}}}\label{sec:computation_of_homotopy}

  Now let us discuss the computation of the homotopy groups of
  $\mathrm{THR}(\mathbb{Z}/4)^{\phi \mathbb{Z}/2}$ and
  $(\mathrm{THR}(\mathbb{Z}/4)^{\phi \mathbb{Z}/2})^{C_2}$.

  \begin{proposition} \label{prop:pi_thr_as_sum}
    We have
\begin{equation*}\pi_i(\mathrm{THR}(\mathbb{Z}/4)^{\phi \mathbb{Z}/2}) \cong \bigoplus_{\substack{n,m,k
    \ge 0\\n+m+k=i}} \mathbb{Z}/2 \mathrm{.}\end{equation*}
\end{proposition}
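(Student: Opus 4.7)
The plan is to combine the decomposition of $\mathrm{THR}(\mathbb{Z}/4)^{\phi\mathbb{Z}/2}$ on underlying spectra from Remark~\ref{rem:thr_underlying_decomp} with a direct $\mathrm{Tor}$ computation. Namely, Remark~\ref{rem:thr_underlying_decomp} gives
\[\mathrm{THR}(\mathbb{Z}/4)^{\phi \mathbb{Z}/2} \simeq \bigoplus_{n,m \ge 0} \Sigma^{n+m}\, H\mathbb{Z}/2 \otimes_{H\mathbb{Z}/4} H\mathbb{Z}/2,\]
so taking homotopy and reindexing by $k = i - n - m$ reduces the claim to showing that $\pi_k(H\mathbb{Z}/2 \otimes_{H\mathbb{Z}/4} H\mathbb{Z}/2) \cong \mathbb{Z}/2$ for every $k \ge 0$.

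To compute these $\mathrm{Tor}$ groups, I would use the minimal free resolution of $\mathbb{Z}/2$ as a $\mathbb{Z}/4$-module, namely the $2$-periodic resolution
\[\cdots \xrightarrow{\,2\,} \mathbb{Z}/4 \xrightarrow{\,2\,} \mathbb{Z}/4 \xrightarrow{\,2\,} \mathbb{Z}/4 \twoheadrightarrow \mathbb{Z}/2.\]
Tensoring over $\mathbb{Z}/4$ with $\mathbb{Z}/2$ replaces each copy of $\mathbb{Z}/4$ by $\mathbb{Z}/2$ and kills every differential (since multiplication by $2$ is zero on $\mathbb{Z}/2$), so $\mathrm{Tor}_k^{\mathbb{Z}/4}(\mathbb{Z}/2, \mathbb{Z}/2) \cong \mathbb{Z}/2$ for all $k \ge 0$. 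Since $H\mathbb{Z}/2 \otimes_{H\mathbb{Z}/4} H\mathbb{Z}/2$ is a module over the Eilenberg-MacLane spectrum $H\mathbb{Z}/2$ and hence splits as a wedge of shifted copies of $H\mathbb{Z}/2$, its homotopy in each degree $k \ge 0$ is precisely the corresponding $\mathrm{Tor}$ group.

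Putting this together, $\pi_i$ of the $(n,m)$-summand is $\mathbb{Z}/2$ exactly when $k \coloneqq i - n - m \ge 0$ and is zero otherwise, contributing one copy of $\mathbb{Z}/2$ for each non-negative triple $(n, m, k)$ with $n + m + k = i$. Summing over $n, m \ge 0$ gives the claimed formula. None of this is hard, the only mild subtlety being to confirm that the direct sum decomposition of Remark~\ref{rem:thr_underlying_decomp} commutes with taking homotopy groups in each degree, which follows since only finitely many summands contribute to any fixed $\pi_i$ (those with $n + m \le i$).
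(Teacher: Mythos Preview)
Your proof is correct and follows essentially the same route as the paper: both use the decomposition of Remark~\ref{rem:thr_underlying_decomp} together with the periodic free resolution of $\mathbb{Z}/2$ over $\mathbb{Z}/4$ to identify $H\mathbb{Z}/2 \otimes_{H\mathbb{Z}/4} H\mathbb{Z}/2 \simeq \bigoplus_{k \ge 0} \Sigma^k H\mathbb{Z}/2$, and then read off the homotopy groups. Your additional remarks on the splitting as an $H\mathbb{Z}/2$-module and on only finitely many summands contributing in each degree are fine elaborations, but not needed beyond what the paper does.
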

\begin{proof}
  First compute $H\mathbb{Z}/2 \otimes_{H \mathbb{Z}/4} H\mathbb{Z}/2$. By the stable Dold-Kan correspondence there is a monoidal Quillen equivalence between
  $H\mathbb{Z}/4$-module spectra and chain complexes of $\mathbb{Z}/4$-modules.
  So considering a free resolution of $\mathbb{Z}/2$ as a $\mathbb{Z}/4$-module,
  we can compute the derived tensor product in chain complexes of
  $\mathbb{Z}/4$-modules and deduce
  \[H\mathbb{Z}/2 \otimes_{H \mathbb{Z}/4} H\mathbb{Z}/2 \simeq \bigoplus_{k \ge
    0} \Sigma^k H\mathbb{Z}/2 \text{.}\]
So by Corollary~\ref{cor:decomp} and Remark~\ref{rem:thr_underlying_decomp} we get an equivalence of underlying spectra
\[\mathrm{THR}(\mathbb{Z}/4)^{\phi \mathbb{Z}/2} \simeq \bigoplus_{n,m,k \ge 0} \Sigma^{n+m+k} H\mathbb{Z}/2\]
and the result follows.
\end{proof}

\begin{proposition} \label{prop:pi_thr_c2_as_sum}
  The homotopy groups of the off-diagonal part (i.e.\ the second summand) of $(\mathrm{THR}(\mathbb{Z}/4)^{\phi
  \mathbb{Z}/2})^{C_2}$ under the decomposition of Corollary~\ref{cor:decomp}
are given by
\begin{equation*}\pi_i\left( \bigoplus_{0 \le n < m} \Sigma^{n+m} H\mathbb{Z}/2 \otimes_{H\mathbb{Z}/4}
  H\mathbb{Z}/2 \right) \cong
  \bigoplus_{\substack{n,m,k \ge 0\\n < m\\n+m+k=i}} \mathbb{Z}/2 \mathrm{.}\end{equation*}
\end{proposition}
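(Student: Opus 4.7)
The plan is to mimic the proof of Proposition~\ref{prop:pi_thr_as_sum} essentially verbatim, since we are looking at a direct summand of a direct sum decomposition whose summands have the same form. The key input is the computation
\[H\mathbb{Z}/2 \otimes_{H\mathbb{Z}/4} H\mathbb{Z}/2 \simeq \bigoplus_{k \ge 0} \Sigma^k H\mathbb{Z}/2,\]
obtained by applying $H\mathbb{Z}/2 \otimes_{H\mathbb{Z}/4} (-)$ to the free resolution of $H\mathbb{Z}/2$ as an $H\mathbb{Z}/4$-module coming from the periodic resolution of $\mathbb{Z}/2$ over $\mathbb{Z}/4$ by multiplication-by-$2$ maps (this is exactly the computation invoked in the proof of Proposition~\ref{prop:pi_thr_as_sum}).

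Given this, I would first substitute this equivalence into each off-diagonal summand $\Sigma^{n+m} H\mathbb{Z}/2 \otimes_{H\mathbb{Z}/4} H\mathbb{Z}/2$, obtaining an equivalence of spectra
\[\bigoplus_{0 \le n < m} \Sigma^{n+m} H\mathbb{Z}/2 \otimes_{H\mathbb{Z}/4} H\mathbb{Z}/2 \simeq \bigoplus_{\substack{n,m,k \ge 0 \\ n < m}} \Sigma^{n+m+k} H\mathbb{Z}/2.\]
Then I would take $\pi_i$, which commutes with direct sums of spectra, and read off the copies of $\mathbb{Z}/2$ coming from summands with $n + m + k = i$. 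This gives exactly the claimed description.

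There is no real obstacle here: the only thing to be a little careful about is that the index set excludes the diagonal $n = m$ (and does not include $n > m$), but since we are summing only over the stated constraint $n < m$ this is automatic. The proof is essentially a one-liner once we invoke the resolution.

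\begin{proof}
Exactly as in the proof of Proposition~\ref{prop:pi_thr_as_sum}, using the free resolution of $H\mathbb{Z}/2$ as an $H\mathbb{Z}/4$-module we have
\[H\mathbb{Z}/2 \otimes_{H\mathbb{Z}/4} H\mathbb{Z}/2 \simeq \bigoplus_{k \ge 0} \Sigma^k H\mathbb{Z}/2.\]
Substituting this into each off-diagonal summand gives an equivalence
\[\bigoplus_{0 \le n < m} \Sigma^{n+m} H\mathbb{Z}/2 \otimes_{H\mathbb{Z}/4} H\mathbb{Z}/2 \simeq \bigoplus_{\substack{n, m, k \ge 0 \\ n < m}} \Sigma^{n+m+k} H\mathbb{Z}/2,\]
and taking $\pi_i$ yields the claimed formula.
\end{proof}
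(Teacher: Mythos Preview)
Your proof is correct and essentially identical to the paper's own proof: both invoke the equivalence $H\mathbb{Z}/2 \otimes_{H\mathbb{Z}/4} H\mathbb{Z}/2 \simeq \bigoplus_{k \ge 0} \Sigma^k H\mathbb{Z}/2$ from the proof of Proposition~\ref{prop:pi_thr_as_sum}, substitute into each summand, and read off $\pi_i$.
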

\begin{proof}
  Similarly to Proposition~\ref{prop:pi_thr_as_sum}, we have an equivalence of spectra
\[ \bigoplus_{0 \le n < m} \Sigma^{n+m} H\mathbb{Z}/2 \otimes_{H\mathbb{Z}/4}
  H\mathbb{Z}/2 \simeq \bigoplus_{0 \le n < m} \bigoplus_{k \ge 0}
  \Sigma^{n+m+k} H\mathbb{Z}/2\]
giving the result.
\end{proof}

So we just need to focus on calculating the homotopy groups of the diagonal
summands of $(\mathrm{THR}(\mathbb{Z}/4)^{\phi \mathbb{Z}/2})^{C_2}$, which are of the form
\[\left(\Sigma^{n \rho}H\mathbb{Z}/4 \otimes_{N_{\{e\}}^{C_2}(H\mathbb{Z}/4)}
  N_{\{e\}}^{C_2}(H\mathbb{Z}/2)\right)^{C_2}\]
for $n \ge 0$. We approach this using the concept of non-abelian derived functors,
as defined in \cite{dold_non-additive_1958}. First let us recall the definition.

  Let $R$ be a commutative ring and let $T : \mathrm{Mod}_R \to \mathrm{Ab}$ be a functor with
  $T(0) = 0$. Let $i \ge 0$ and $n \ge 0$ be natural numbers. Given an $R$-module $M$, let $M_\bullet$ be a level-wise free
  simplicial $R$-module such that the geometric realisation of the $HR$-module spectrum $HM_{\bullet}$ is
  $\Sigma^n HM$ (in the following we will sometimes abuse terminology and refer to
  $\abs{HM_{\bullet}}$ as the geometric realisation of $M_{\bullet}$, implicitly
  taking the Eilenberg-MacLane spectrum levelwise). Then we define a functor
  \[L_i^{(n)}T : \mathrm{Mod}_R \to \mathrm{Ab}\]
  via
  \[L_i^{(n)}T(M) \coloneqq \pi_i(\abs{HT(M_\bullet)})\]
  where $T(M_\bullet)$ means the simplicial abelian group produced by applying
  $T$ level-wise to $M_\bullet$. Note one can always produce such an
  $M_\bullet$, for example by taking a free resolution for $M[n]$ in chain complexes of $R$-modules, then
  applying the Dold-Kan equivalence of Theorem~\ref{thm:dold_kan}.

\begin{definition}[Non-abelian derived functor]\label{def:non-ab_derived}
  We call $L_i^{(n)}T$ the $i$th non-abelian derived functor of type $n$ of $T$.
\end{definition}
\begin{remark}\label{rem:nonab_i_less_n}
  Observe that $T(M_\bullet)$ is zero in degrees less than $n$, so $L_i^{(n)}T(M) = 0$ for $i < n$.
\end{remark}

\begin{lemma} \label{lem:expansion_to_em}
  Let $R$ be a commutative ring and $M$ an $R$-module. Let $M_{\bullet}$ be a free
  simplicial $R$-module with geometric realisation $\Sigma^n HM$. Then
  \begin{align*}
    \Sigma^{n\rho} HR \otimes_{N_{\{e\}}^{C_2}(HR)} N_{\{e\}}^{C_2}(HM)
    &\simeq \abs{H\left( M_\bullet \otimes_R M_\bullet, w \right)}\mathrm{,}
  \end{align*}
  where $w$ is the involution on $M_i \otimes_R M_i$
  given by swapping the two copies of $M_i$. This equivalence is natural in
  $M_\bullet$ (if we take $M$ to depend functorially on $M_\bullet$).
\end{lemma}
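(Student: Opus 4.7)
The plan is to start from the left-hand side and move the norm and relative tensor inside the geometric realisation, then identify each simplicial level using general facts about the norm and Eilenberg--MacLane spectra. The naturality in $M_\bullet$ will be immediate from the naturality of every construction used.

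First I would use symmetric monoidality of the derived norm to rewrite the suspension: since $N^{C_2}_{\{e\}}(S^n) \simeq S^{n\rho}$, we have
\[\Sigma^{n\rho} N^{C_2}_{\{e\}}(HM) \simeq N^{C_2}_{\{e\}}(\Sigma^n HM)\text{.}\]
The hypothesis gives $\Sigma^n HM \simeq \abs{HM_\bullet}$, and since $N^{C_2}_{\{e\}}$ is a derived smash product with swap $C_2$-action, it commutes with geometric realisation of a level-wise sufficiently cofibrant simplicial spectrum; hence
\[N^{C_2}_{\{e\}}(\Sigma^n HM) \simeq \abs{N^{C_2}_{\{e\}}(HM_\bullet)}\text{.}\]
Commuting the relative derived smash with realisation in its free variable as well, the LHS becomes
\[\Bigl| HR \otimes_{N^{C_2}_{\{e\}}(HR)} N^{C_2}_{\{e\}}(HM_\bullet) \Bigr|\text{.}\]

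Second, I would identify each simplicial level. The general identity
\[HR \otimes_{N^{C_2}_{\{e\}}(HR)} N^{C_2}_{\{e\}}(X) \simeq X \otimes_{HR} X\]
(with $C_2$ acting by swap on the RHS), obtained from the coequaliser description of the relative smash and the factorisation $N^{C_2}_{\{e\}}(HR) = HR \otimes HR \xrightarrow{\mu} HR$, specialises with $X = HM_i$ to give $HM_i \otimes_{HR} HM_i$. Since $M_i$ is free, the underived tensor already computes the derived one, so this agrees with $H(M_i \otimes_R M_i)$, with the swap $C_2$-action corresponding exactly to $w$. Assembling these identifications level by level yields $\abs{H(M_\bullet \otimes_R M_\bullet, w)}$, as desired.

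The main technical obstacle is justifying that the derived norm and derived relative smash product actually commute with geometric realisation, rather than merely doing so formally. This relies on $HM_\bullet$ being level-wise flat and having the Reedy-type cofibrancy needed for the derived functors to be computed without further point-set replacement. Under the flat model structure of Section~\ref{sec:pointset}, the level-wise freeness of $M_\bullet$ supplies what is required, but this bookkeeping must be checked explicitly (in particular for the norm, which combines smash with a $C_2$-equivariance constraint).
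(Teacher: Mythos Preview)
Your argument is correct and follows the same overall outline as the paper: both begin by absorbing the $\Sigma^{n\rho}$ into the norm and then passing the norm and relative smash inside the realisation to reduce to a level-wise identification. The difference lies in that level-wise step. You invoke the base-change identity $HR \otimes_{N^{C_2}_{\{e\}}(HR)} N^{C_2}_{\{e\}}(X) \simeq X \otimes_{HR} X$ directly, whereas the paper instead chooses a basis $M_i \cong \bigoplus_{B_i} R$, splits the norm of a sum into its diagonal and induced off-diagonal pieces, and tracks the swap action by hand. Your route is cleaner and manifestly natural in $M_\bullet$, which spares you the separate naturality argument the paper has to supply (the basis decomposition is not functorial, so the paper recovers naturality by recognising the comparison map as the canonical map to the Eilenberg--MacLane spectrum of $\pi_0$). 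On the other hand, the paper's explicit basis description is not wasted effort: it is reused verbatim later when analysing the map from genuine to geometric fixed points, where one needs to see concretely how diagonal and off-diagonal summands behave.
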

\begin{proof}
  First rewrite as
  \begin{align*}
    \Sigma^{n\rho} HR \otimes_{N_{\{e\}}^{C_2}(HR)} N_{\{e\}}^{C_2}(HM) &\simeq HR \otimes_{N_{\{e\}}^{C_2}(HR)} N_{\{e\}}^{C_2}(\Sigma^nHM)\\
    &\simeq \abs{HR \otimes_{N_{\{e\}}^{C_2}(HR)} N_{\{e\}}^{C_2}(HM_\bullet)} \mathrm{.}
  \end{align*}
  To see the first equivalence, write the suspension $\Sigma^n({-})$ as a smash product with the
  sphere spectrum $S^n \wedge ({-})$, then note the norm is monoidal by
  \cite{hill_nonexistence_2016-1} Proposition A.53 and $N^{C_2}_{\{e\}}(S^n) \cong S^{n \rho}$ by
  \cite{hill_nonexistence_2016-1} Proposition A.59. The second equivalence uses
  the fact that geometric realisation commutes with the norm and tensor products
  (recall by \cite{hill_nonexistence_2016-1} Proposition A.53 the norm commutes with
  sifted colimits).

  Let $B_i$ index a basis of the free module $M_i$, i.e.\ $M_i \cong \bigoplus_{B_i} R$.
  We first establish some notation. Consider the set $(B_i \times B_i)_{C_2}$, where $C_2$ acts on $B_i
  \times B_i$ by swapping the two factors. Denote the diagonal part by $B^d_i = \{(u, u) \in (B_i \times B_i)_{C_2}\} \cong
  B_i$ and denote the off-diagonal part by $B^o_i = (B_i \times B_i)_{C_2} \setminus B^d_i$.
  Then we have a series of equivalences of simplicial $C_2$-spectra
  \begin{align*}
    &\phantom{{}\simeq{}} HR \otimes_{N_{\{e\}}^{C_2}(HR)} N_{\{e\}}^{C_2}(HM_\bullet)\\
    &\simeq HR \otimes_{N_{\{e\}}^{C_2}(HR)} N_{\{e\}}^{C_2}\left(\bigoplus_{B_{\bullet}} HR\right)\\
    &\simeq HR \otimes_{N_{\{e\}}^{C_2}(HR)} \left( \bigoplus_{B^d_{\bullet}} N_{\{e\}}^{C_2}(HR) \oplus \bigoplus_{B^o_\bullet} \left(\Sigma^\infty(C_2)_{+} \otimes_{\mathbb{S}} N_{\{e\}}^{C_2}(HR) \right)\right)\\
    &\simeq \bigoplus_{B^d_{\bullet}} HR \oplus \bigoplus_{B^o_\bullet} \left( \Sigma^\infty(C_2)_{+} \otimes_{\mathbb{S}} HR \right)\\
    &\simeq H\left(\bigoplus_{B_\bullet \times B_\bullet} R, w' \right) \\
    &\simeq H\left( M_\bullet \otimes_R M_\bullet, w \right)\mathrm{,}
  \end{align*}
  where $w'$ is the involution on $\bigoplus_{B_i \times B_i} R$
  given by swapping the two copies of $B_i$, and which under $\bigoplus_{B_i \times
    B_i} R \cong M_i \otimes_R M_i$ becomes the involution $w$ described in the
  lemma statement.

  To see naturality, consider the map of spectra
\[HR \otimes_{N_{\{e\}}^{C_2}(HR)} N_{\{e\}}^{C_2}(HM_i) \to
  H(M_i \otimes M_i)\]
obtained as the canonical map from the left hand side to the Eilenberg-MacLane spectrum
of its degree zero homotopy, and observe that taking geometric realisations gives the
map in the statement of the lemma; from this definition of the map it is clear
that it is natural in $M_\bullet$.
\end{proof}

The immediate consequence of this theorem is that we can describe some homotopy
groups of interest using non-abelian derived functors. We present these
generally but note the case we will be interested in is when
  $R = \mathbb{Z}/4$ and $M = \mathbb{Z}/2$.
  First we consider the
homotopy groups of the genuine $C_2$-fixed points; this appeared as
Lemma~\ref{lem:intro_nonab} in the introduction.
Define $F_R : \mathrm{Mod}_R \to \mathrm{Ab}$ on an
  $R$-module $M$ by $F_R(M)
  = (M \otimes_R M)^{C_2}$, where $M \otimes_R M$ has the $C_2$-action
  given by swapping the factors.

\begin{corollary}\label{cor:F_non_ab}
  We have
  \[\pi_i^{C_2}\Big(\Sigma^{n\rho} HR
      \otimes_{N_{\{e\}}^{C_2}(HR)}
      N^{C_2}_{\{e\}}(HM)\Big) \cong L^{(n)}_i F_R(M)\]
  where $L^{(n)}_i F_R$ denotes the $i$th non-abelian derived functor of type $n$
  of $F_R$.
\end{corollary}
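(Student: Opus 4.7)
The plan is to apply Lemma~\ref{lem:expansion_to_em} to rewrite the spectrum, take genuine $C_2$-fixed points levelwise on the resulting simplicial spectrum, and finally recognise the outcome as the non-abelian derived functor of Definition~\ref{def:non-ab_derived}.

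Fix a levelwise free simplicial $R$-module $M_\bullet$ realising to $\Sigma^n HM$, for instance by applying Dold-Kan to a free resolution of $M[n]$. Lemma~\ref{lem:expansion_to_em} then gives a natural equivalence of $C_2$-spectra between $\Sigma^{n\rho} HR \otimes_{N^{C_2}_{\{e\}}(HR)} N^{C_2}_{\{e\}}(HM)$ and the geometric realisation $|H(M_\bullet \otimes_R M_\bullet, w)|$. Since $M_\bullet$ is levelwise free, the simplicial $C_2$-spectrum $H(M_\bullet \otimes_R M_\bullet, w)$ is levelwise cofibrant in the flat model structure of Section~\ref{sec:pointset}, which is enough to ensure that genuine $C_2$-fixed points commute with this geometric realisation.

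To compute the fixed points levelwise, revisit the chain of equivalences in the proof of Lemma~\ref{lem:expansion_to_em}: each step is $C_2$-equivariant, so at simplicial level $i$ we obtain a $C_2$-equivariant decomposition
\[H(M_i \otimes_R M_i, w) \simeq \bigoplus_{B^d_i} HR \;\oplus\; \bigoplus_{B^o_i} \bigl(\Sigma^\infty (C_2)_+ \otimes_{\mathbb{S}} HR\bigr).\]
Applying genuine $C_2$-fixed points uses that $(HR)^{C_2} \simeq HR$ for $HR$ with trivial involution, in the conventions of Section~\ref{sec:pointset} (the same identification implicitly used in the proof of Lemma~\ref{lem:ab_group_geo_fixed}), together with the standard identification $(\Sigma^\infty (C_2)_+ \otimes_{\mathbb{S}} HR)^{C_2} \simeq HR$ for a free $C_2$-orbit. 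The resulting sum $\bigoplus_{B^d_i \sqcup B^o_i} HR$ is precisely $HF_R(M_i)$, since $F_R(M_i) = (M_i \otimes_R M_i)^{C_2}$ is the free $R$-module on $B^d_i \sqcup B^o_i$. Naturality in $M_\bullet$ then promotes this to an equivalence of simplicial spectra $H(M_\bullet \otimes_R M_\bullet, w)^{C_2} \simeq HF_R(M_\bullet)$.

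Putting these steps together, the $C_2$-fixed points of the spectrum in question are equivalent to $|HF_R(M_\bullet)|$, whose $i$th homotopy group is $\pi_i$ of the simplicial abelian group $F_R(M_\bullet)$ by standard simplicial-abelian-group theory. By Definition~\ref{def:non-ab_derived} this is exactly $L^{(n)}_i F_R(M)$, as desired. The main point requiring care is the commutation of genuine $C_2$-fixed points with geometric realisation on our simplicial $C_2$-spectrum, which is where the cofibrancy properties inherited from levelwise freeness of $M_\bullet$ are essential; once that is in place, the level-by-level computation is a direct consequence of the decomposition already extracted inside the proof of Lemma~\ref{lem:expansion_to_em}.
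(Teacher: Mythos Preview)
Your proof is correct and follows the paper's approach: apply Lemma~\ref{lem:expansion_to_em}, identify the $C_2$-fixed points of $|H(M_\bullet \otimes_R M_\bullet, w)|$ levelwise with $|HF_R(M_\bullet)|$, and invoke Definition~\ref{def:non-ab_derived}. One small caveat: your justification via levelwise flat cofibrancy is not quite right (the point-set models $A[S^n]$ of Section~\ref{sec:pointset} are not flat cofibrant in general), but the conclusion stands because for an abelian group with involution one has $H(B,w)^{C_2} \simeq H(B^{C_2})$, and this identification passes to simplicial objects and their realisations---this is the step the paper's three-line proof leaves implicit and which is unpacked later in the proof of Lemma~\ref{lem:genuine_to_geometric_expand}.
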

\begin{proof}
  We have
  \begin{align*}
    \pi_i^{C_2}\Big(\Sigma^{n\rho} HR \otimes_{N_{\{e\}}^{C_2}(HR)} N_{\{e\}}^{C_2}(HM)\Big)
    &\cong \pi_i^{C_2}\big(\abs{H(M_\bullet \otimes_{R} M_\bullet, w)}\big)\\
    &\cong \pi_i\big(\abs{HF_R(M_{\bullet})}\big)\\
    &\cong L^{(n)}_iF_R(M) \mathrm{.} \qedhere
  \end{align*}
\end{proof}

Next we consider the homotopy groups of the underlying non-equivariant spectrum.
Define $G_R : \mathrm{Mod}_R \to \mathrm{Ab}$ on an
  $R$-module $M$ by $G_R(M) = M \otimes_R M$.
\begin{corollary}\label{cor:G_non_ab}
  We have
  \[\pi_i\Big(\Sigma^{n\rho} HR \otimes_{N_{\{e\}}^{C_2}(HR)} N^{C_2}_{\{e\}}(HM)\Big) \cong L^{(n)}_i G_R(M)\]
  where $L^{(n)}_i G_R$ denotes the $i$th non-abelian derived functor of type $n$
  of $G_R$.
\end{corollary}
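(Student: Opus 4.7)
The plan is to mirror the proof of Corollary~\ref{cor:F_non_ab} almost verbatim, replacing the genuine fixed-point functor with the underlying functor. Lemma~\ref{lem:expansion_to_em} gives a natural equivalence of simplicial $C_2$-spectra
\[\Sigma^{n\rho} HR \otimes_{N_{\{e\}}^{C_2}(HR)} N_{\{e\}}^{C_2}(HM) \simeq \abs{H(M_\bullet \otimes_R M_\bullet, w)},\]
where $M_\bullet$ is a level-wise free simplicial $R$-module with realisation $\Sigma^n HM$. Since Eilenberg-MacLane spectra commute with geometric realisation and the underlying spectrum functor does too, the only step is to identify the underlying (non-equivariant) homotopy groups of $\abs{H(M_\bullet \otimes_R M_\bullet, w)}$.

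To do this, I would forget the involution $w$ and observe that the underlying spectrum of $H(M_i \otimes_R M_i, w)$ is simply $H(M_i \otimes_R M_i) = HG_R(M_i)$, naturally in $M_\bullet$. Applying the $\pi_i$ of the geometric realisation of this simplicial Eilenberg-MacLane spectrum gives, by definition of non-abelian derived functors (Definition~\ref{def:non-ab_derived}),
\[\pi_i\bigl(\abs{HG_R(M_\bullet)}\bigr) = L^{(n)}_i G_R(M).\]
Chaining these identifications yields the desired isomorphism.

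There is no real obstacle here; the argument is a direct analogue of Corollary~\ref{cor:F_non_ab} with $F_R(N) = (N \otimes_R N)^{C_2}$ replaced by $G_R(N) = N \otimes_R N$, and with the underlying spectrum functor replacing the genuine $C_2$-fixed points. The proof can essentially be presented as a short display of equivalences
\[\pi_i\Big(\Sigma^{n\rho} HR \otimes_{N_{\{e\}}^{C_2}(HR)} N_{\{e\}}^{C_2}(HM)\Big) \cong \pi_i\bigl(\abs{H(M_\bullet \otimes_R M_\bullet)}\bigr) \cong \pi_i\bigl(\abs{HG_R(M_\bullet)}\bigr) \cong L^{(n)}_i G_R(M),\]
ending with a $\qedhere$.
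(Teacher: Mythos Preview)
Your proposal is correct and follows essentially the same approach as the paper: apply Lemma~\ref{lem:expansion_to_em}, forget the involution to identify the underlying spectrum as $\abs{HG_R(M_\bullet)}$, and invoke the definition of the non-abelian derived functor. The paper's proof is exactly the chain of isomorphisms you wrote at the end.
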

\begin{proof}
  We have
  \begin{align*}
    \pi_i\Big(\Sigma^{n\rho} HR \otimes_{N_{\{e\}}^{C_2}(HR)} N_{\{e\}}^{C_2}(HM)\Big)
    &\cong \pi_i\big(\abs{H(M_\bullet \otimes_{R} M_\bullet)}\big)\\
    &\cong L^{(n)}_iG_R(M) \mathrm{.} \qedhere
  \end{align*}
\end{proof}

However this time when $R = \mathbb{Z}/4$ and $M = \mathbb{Z}/2$ we can also
compute the homotopy groups directly, giving us a computation of
$L^{(n)}_iG_{\mathbb{Z}/4}(\mathbb{Z}/2)$.

\begin{lemma} \label{lem:l_g_comp}
  We have
  \[L^{(n)}_iG_{\mathbb{Z}/4}(\mathbb{Z}/2) \cong \begin{cases}0 & \text{if $i < 2n$}\\ \mathbb{Z}/2 & \text{if $i \ge
                                                             2n$.}\end{cases} \]
\end{lemma}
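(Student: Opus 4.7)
The plan is to reduce this to a direct computation of the underlying (non-equivariant) homotopy type of the spectrum
\[X_n \coloneqq \Sigma^{n\rho} H\mathbb{Z}/4 \otimes_{N_{\{e\}}^{C_2}(H\mathbb{Z}/4)} N^{C_2}_{\{e\}}(H\mathbb{Z}/2),\]
since by Corollary~\ref{cor:G_non_ab} we have $L^{(n)}_i G_{\mathbb{Z}/4}(\mathbb{Z}/2) \cong \pi_i(X_n)$, and the non-equivariant homotopy groups are exactly what we want.

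First I would unwind the underlying spectrum. The regular representation $\rho$ has underlying dimension $2$, so $\Sigma^{n\rho}$ restricts to $\Sigma^{2n}$. The underlying spectrum of $N^{C_2}_{\{e\}}(HM)$ is $HM \otimes_{\mathbb{S}} HM$, so after restriction
\[X_n \simeq \Sigma^{2n}\bigl(H\mathbb{Z}/4 \otimes_{H\mathbb{Z}/4 \otimes H\mathbb{Z}/4}(H\mathbb{Z}/2 \otimes H\mathbb{Z}/2)\bigr) \simeq \Sigma^{2n}(H\mathbb{Z}/2 \otimes_{H\mathbb{Z}/4} H\mathbb{Z}/2).\]

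Next I would compute $H\mathbb{Z}/2 \otimes_{H\mathbb{Z}/4} H\mathbb{Z}/2$ using the minimal free resolution of $H\mathbb{Z}/2$ as an $H\mathbb{Z}/4$-module, namely $\cdots \xrightarrow{2} H\mathbb{Z}/4 \xrightarrow{2} H\mathbb{Z}/4 \xrightarrow{2} H\mathbb{Z}/4 \to H\mathbb{Z}/2$. Tensoring with $H\mathbb{Z}/2$ over $H\mathbb{Z}/4$ makes all the multiplication-by-$2$ differentials zero, yielding the splitting
\[H\mathbb{Z}/2 \otimes_{H\mathbb{Z}/4} H\mathbb{Z}/2 \simeq \bigoplus_{k \ge 0} \Sigma^k H\mathbb{Z}/2\]
that was already used in the proof of Proposition~\ref{prop:pi_thr_as_sum}.

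Combining the two steps, $X_n \simeq \bigoplus_{k \ge 0} \Sigma^{k+2n} H\mathbb{Z}/2$. Taking $\pi_i$ contributes exactly one copy of $\mathbb{Z}/2$ when $i - 2n \ge 0$ (from $k = i - 2n$) and nothing otherwise, giving the claimed formula. There is no real obstacle here; the content is entirely in identifying the underlying spectrum, which was already largely done earlier in the section.
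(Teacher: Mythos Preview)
Your argument is correct and follows essentially the same route as the paper: identify $L^{(n)}_i G_{\mathbb{Z}/4}(\mathbb{Z}/2)$ with $\pi_i$ of the underlying spectrum via Corollary~\ref{cor:G_non_ab}, simplify that underlying spectrum to $\Sigma^{2n}(H\mathbb{Z}/2 \otimes_{H\mathbb{Z}/4} H\mathbb{Z}/2)$, and then use the splitting $H\mathbb{Z}/2 \otimes_{H\mathbb{Z}/4} H\mathbb{Z}/2 \simeq \bigoplus_{k \ge 0}\Sigma^k H\mathbb{Z}/2$ already noted in the proof of Proposition~\ref{prop:pi_thr_as_sum}.
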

\begin{proof}
  On underlying spectra we have
  \begin{align*}
    \Sigma^{n\rho} H\mathbb{Z}/4 \otimes_{N_{\{e\}}^{C_2}(H\mathbb{Z}/4)} N^{C_2}_{\{e\}}(H\mathbb{Z}/2) &\cong \Sigma^{2n} (H\mathbb{Z}/2 \otimes_{H\mathbb{Z}/4} H\mathbb{Z}/2)\\
    &\cong \bigoplus_{k \ge 0} \Sigma^{2n+k} H\mathbb{Z}/2
  \end{align*}
  and observe
  \[\pi_i\left(\bigoplus_{k \ge 0} \Sigma^{2n+k} H\mathbb{Z}/2\right)
  \cong \begin{cases}0 & \text{if $i < 2n$}\\ \mathbb{Z}/2 & \text{if $i \ge
                                                             2n$.}\end{cases} \]
  Now by Corollary~\ref{cor:G_non_ab} the result follows.
\end{proof}

For small $i$ and $n$ we can compute $L_i^{(n)}F_{\mathbb{Z}/4}(\mathbb{Z}/2)$
directly with computer assistance.
We work with the free resolution of $\mathbb{Z}/2[n]$ in chain complexes given
by
\[\dotsb \xrightarrow{2} \mathbb{Z}/4 \xrightarrow{2} \mathbb{Z}/4
  \xrightarrow{2} \mathbb{Z}/4 \to 0 \to \dotsb \mathrm{,}\]
where the rightmost $\mathbb{Z}/4$ is in degree $n$. Applying the Dold-Kan
equivalence (recalled in Theorem~\ref{thm:dold_kan}) between chain complexes and simplicial modules
gives
a level-wise free simplicial $\mathbb{Z}/4$-module $M_\bullet$ with geometric
realisation $\Sigma^n H\mathbb{Z}/2$. We use this to compute the
non-abelian derived functor exactly as defined in Definition~\ref{def:non-ab_derived}: apply $F_{\mathbb{Z}/4}$ level-wise, then compute homology groups of the Moore complex.

Without any
particularly sophisticated optimisation and using a normal laptop computer, we managed
to compute just far enough to prove the non-periodicity result mentioned in
the introduction. However the simplicial structure coming
from Dold-Kan gets exponentially larger as the degree increases, so even with more serious computing power we wouldn't be
able to go much farther without significant algorithmic improvements.

Combining the computations from the computer program with our earlier discussion, we obtain the following.
These results appeared as Theorem~\ref{thm:intro_thr_groups} in the introduction.

\begin{theorem} \label{thm:thr_groups}
  The low degree homotopy groups of the terms in the equaliser of Theorem~\ref{thm:tcr_eq}
   computing $\mathrm{TCR}(\mathbb{Z}/4)^{\phi \mathbb{Z}/2}$
  are as follows:
\[\begin{tabular}{>{$}c<{$}>{$}c<{$}>{$}c<{$}}
  i & \pi_i((\mathrm{THR}(\mathbb{Z}/4)^{\phi \mathbb{Z}/2})^{C_2}) & \pi_i(\mathrm{THR}(\mathbb{Z}/4)^{\phi \mathbb{Z}/2})\\ \hline
  0 & \mathbb{Z}/4 & \mathbb{Z}/2\\
  1 & (\mathbb{Z}/2)^4 & (\mathbb{Z}/2)^3\\
  2 & (\mathbb{Z}/2)^7 & (\mathbb{Z}/2)^6\\
  3 & (\mathbb{Z}/2)^{13} & (\mathbb{Z}/2)^{10}\\
  4 & (\mathbb{Z}/2)^{15} \oplus (\mathbb{Z}/4)^3 & (\mathbb{Z}/2)^{15}\\
  5 & (\mathbb{Z}/2)^{27} & (\mathbb{Z}/2)^{21}\\
  6 & (\mathbb{Z}/2)^{34} & (\mathbb{Z}/2)^{28}
\end{tabular}\]
\end{theorem}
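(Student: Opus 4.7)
The plan is to assemble the two columns of the table by combining the decompositions of Section~\ref{sec:decomposition} with a finite amount of computer-assisted calculation; no new conceptual input is required.

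The right-hand column is immediate from Proposition~\ref{prop:pi_thr_as_sum}: the number of nonnegative integer triples $(n, m, k)$ with $n + m + k = i$ equals $\binom{i+2}{2}$, giving $1, 3, 6, 10, 15, 21, 28$ copies of $\mathbb{Z}/2$ for $i = 0, 1, \dots, 6$, as listed.

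For the left-hand column, I would use Corollary~\ref{cor:decomp} to split $\pi_i((\mathrm{THR}(\mathbb{Z}/4)^{\phi \mathbb{Z}/2})^{C_2})$ into an off-diagonal and a diagonal contribution. The off-diagonal contribution is computed by Proposition~\ref{prop:pi_thr_c2_as_sum} as a direct sum of copies of $\mathbb{Z}/2$ indexed by triples with $0 \le n < m$ and $n + m + k = i$, so a direct enumeration gives $0, 1, 2, 4, 6, 9, 12$ copies of $\mathbb{Z}/2$ for $i = 0, 1, \dots, 6$. By Corollary~\ref{cor:F_non_ab} the diagonal contribution is $\bigoplus_{n \ge 0} L_i^{(n)} F_{\mathbb{Z}/4}(\mathbb{Z}/2)$, and only the terms with $n \le i$ can contribute: the Dold-Kan image $\sigma M_\bullet$ of the resolution of $\mathbb{Z}/2[n]$ vanishes in simplicial degrees $< n$ (since $\sigma M_k$ is a sum indexed by surjections $[k] \twoheadrightarrow [j]$ with $j \ge n$, and no such surjection exists when $k < n$), so the same is true after applying $F_{\mathbb{Z}/4}$ and passing to the Moore complex.

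The remaining task is to evaluate $L_i^{(n)} F_{\mathbb{Z}/4}(\mathbb{Z}/2)$ for $0 \le n \le i \le 6$. I would follow the procedure sketched just before the theorem: take the explicit $\mathbb{Z}/4$-free resolution $\cdots \xrightarrow{2} \mathbb{Z}/4 \xrightarrow{2} \mathbb{Z}/4 \to 0$ of $\mathbb{Z}/2[n]$, apply the Dold-Kan functor $\sigma$ of Theorem~\ref{thm:dold_kan} to obtain a level-wise free simplicial $\mathbb{Z}/4$-module, apply $F_{\mathbb{Z}/4}$ level-wise (using that $F_{\mathbb{Z}/4}((\mathbb{Z}/4)^r) \cong (\mathbb{Z}/4)^r \oplus (\mathbb{Z}/4)^{\binom{r}{2}}$, split into diagonal and symmetrised off-diagonal tensors), and compute the homology of the resulting Moore complex. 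The main, and indeed only, obstacle is computational: the ranks of $\sigma M_k$ grow exponentially in $k$, so even with the accompanying SageMath program this direct method cannot be pushed significantly beyond $i = 6$ without substantial algorithmic improvement. Summing the computed diagonal groups with the off-diagonal contributions above then recovers the first column of the table.
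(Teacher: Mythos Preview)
Your proposal is correct and follows essentially the same approach as the paper: the right column comes from Proposition~\ref{prop:pi_thr_as_sum}, the off-diagonal part of the left column from Proposition~\ref{prop:pi_thr_c2_as_sum}, and the diagonal part from Corollary~\ref{cor:F_non_ab} together with the computer-assisted evaluation of $L_i^{(n)}F_{\mathbb{Z}/4}(\mathbb{Z}/2)$ for $0 \le n \le i \le 6$. Your off-diagonal counts $0,1,2,4,6,9,12$ and the overall assembly match the paper exactly.
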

\begin{proof}
  Proposition~\ref{prop:pi_thr_as_sum} computed
  $\pi_i(\mathrm{THR}(\mathbb{Z}/4)^{\phi \mathbb{Z}/2})$. For
  $(\mathrm{THR}(\mathbb{Z}/4)^{\phi \mathbb{Z}/2})^{C_2}$ we work with the
  decomposition of Corollary~\ref{cor:decomp}.
  Proposition~\ref{prop:pi_thr_c2_as_sum} computed the homotopy groups of the off-diagonal part of the
  decomposition. We use a computer program to compute the homotopy groups of the $n$-indexed
  diagonal summands of the decomposition,
  which by Corollary~\ref{cor:F_non_ab} are given by the values of the non-abelian
  derived functors $L_i^{(n)}F_R$ at $\mathbb{Z}/2$.

  For example at $i = 1$ the computer program calculates
  \[L_1^{(0)}F_{\mathbb{Z}/4}(\mathbb{Z}/2) \cong (\mathbb{Z}/2)^2 \quad \quad
    L_1^{(1)}F_{\mathbb{Z}/4}(\mathbb{Z}/2) \cong \mathbb{Z}/2\]
  (and $L_1^{(n)}F_{\mathbb{Z}/4}(\mathbb{Z}/2) = 0$ for $n > 1$) so we get
  \begin{align*}\pi_i((\mathrm{THR}(\mathbb{Z}/4)^{\phi \mathbb{Z}/2})^{C_2}) &\cong
    \bigoplus_{0 \le n \le 1} L_1^{(n)}F_{\mathbb{Z}/4}(\mathbb{Z}/2) \oplus \bigoplus_{\substack{n,m,k
                                                                                \ge 0\\n < m\\n+m+k=1}} \mathbb{Z}/2\\
                                                                              &\cong (\mathbb{Z}/2)^2 \oplus
                                                                                \mathbb{Z}/2 \oplus \mathbb{Z}/2\\
    &\cong (\mathbb{Z}/2)^4 \text{.}\end{align*}
  See the code repository for the full output for all $i$.
\end{proof}

\subsection{Action of the maps \texorpdfstring{$f$}{f} and \texorpdfstring{$r$}{r} on homotopy}\label{sec:action_on_htpy}

We can go further and compute the maps $f$ and $r$ on the homotopy groups that
we have calculated,
allowing us to obtain short exact sequences describing the homotopy groups of $\mathrm{TCR}(\mathbb{Z}/4)^{\phi
  \mathbb{Z}/2}$. Recall the descriptions of $f$ and $r$ in
Corollary~\ref{cor:r_f_map_def}, defined in terms of the decompositions in
Corollary~\ref{cor:decomp}. The actions of the maps on the $(n, m)$-summands of
$(\mathrm{THR}(\mathbb{Z}/4)^{\phi \mathbb{Z}/2})^{C_2}$
are
straightforward: $f$ acts by the diagonal map, and $r$ is zero. This section
is mostly occupied by analysing what happens on the diagonal $n$-indexed summands.

First let us consider how $f$ acts on the homotopy groups of the diagonal summands.
For each $n \ge 0$, the map $f$ takes the $n$-indexed summand of $(\mathrm{THR}(\mathbb{Z}/4)^{\phi
\mathbb{Z}/2})^{C_2}$ to the $n$-indexed summand of $\mathrm{THR}(\mathbb{Z}/4)^{\phi
\mathbb{Z}/2}$ by the restriction map.
Recall from Corollary~\ref{cor:F_non_ab}  that the degree $i$ homotopy of the $n$-indexed summand of $(\mathrm{THR}(\mathbb{Z}/4)^{\phi
\mathbb{Z}/2})^{C_2}$ is isomorphic to
$L_i^{(n)}F_{\mathbb{Z}/4}(\mathbb{Z}/2)$, and recall from Corollary \ref{cor:G_non_ab} and
Lemma~\ref{lem:l_g_comp} that the degree $i$
homotopy of the $n$-indexed summand of $\mathrm{THR}(\mathbb{Z}/4)^{\phi
  \mathbb{Z}/2}$ is isomorphic to $L_i^{(n)}G_{\mathbb{Z}/4}(\mathbb{Z}/2)$, which is $0$ for
$i < 2n$ and $\mathbb{Z}/2$ for $i \ge 2n$. Then we find:

\begin{proposition} \label{prop:f_on_diag_comp}
  By Corollary~\ref{cor:r_f_map_def} the map $f$ restricted to the $n$-indexed
  diagonal summands of the source and target is the restriction
  map
  \[\pi^{C_2}_i\Big(\Sigma^{n\rho}H\mathbb{Z}/4
      \otimes_{N^{C_2}_{\{e\}}(H\mathbb{Z}/4)}N^{C_2}_{\{e\}}(H\mathbb{Z}/2)\Big)
    \to \pi_i\Big(\Sigma^{n\rho}H\mathbb{Z}/4
      \otimes_{N^{C_2}_{\{e\}}(H\mathbb{Z}/4)}N^{C_2}_{\{e\}}(H\mathbb{Z}/2)\Big) \text{.}\]
  Under the isomorphisms between these groups and non-abelian derived functors this becomes the map
  \[L_i^{(n)}F_{\mathbb{Z}/4}(\mathbb{Z}/2) \to L_i^{(n)}G_{\mathbb{Z}/4}(\mathbb{Z}/2)\]
  induced by the natural transformation $F_{R} \Rightarrow
  G_{R}$ given at a module $M$ by inclusion of fixed points $(M \otimes_{R} M)^{C_2}
  \hookrightarrow M \otimes_R M$.
\end{proposition}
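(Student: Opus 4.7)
The plan is to trace the restriction map through the identifications of Lemma~\ref{lem:expansion_to_em} and Corollaries~\ref{cor:F_non_ab} and~\ref{cor:G_non_ab}. First, I would use that the equivalence of $C_2$-spectra
\[
\Sigma^{n\rho}HR \otimes_{N_{\{e\}}^{C_2}(HR)} N_{\{e\}}^{C_2}(HM) \simeq \abs{H(M_\bullet \otimes_R M_\bullet, w)}
\]
from Lemma~\ref{lem:expansion_to_em} is natural in $M_\bullet$, and that the restriction map from $C_2$-fixed points to underlying spectrum is natural and commutes with geometric realisation of simplicial $C_2$-spectra. This reduces the problem to showing that at each simplicial level $i$, the restriction map on the Eilenberg--MacLane $C_2$-spectrum $H(M_i \otimes_R M_i, w)$ is induced on $\pi_0$ by the fixed-point inclusion $(M_i \otimes_R M_i)^{C_2} \hookrightarrow M_i \otimes_R M_i$.

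To verify this I would unwind the direct sum decomposition used in the proof of Lemma~\ref{lem:expansion_to_em}. Writing $M_i \cong \bigoplus_{B_i} R$, the $C_2$-spectrum $H(M_i \otimes_R M_i, w)$ splits over the $C_2$-orbits of $B_i \times B_i$ into trivial summands $HR$ (one for each $C_2$-fixed element) and free summands $\Sigma^\infty(C_2)_+ \otimes_{\mathbb{S}} HR$ (one for each free orbit). The restriction is the identity on each trivial summand, and on each free summand is the canonical map $HR \to \Sigma^\infty(C_2)_+ \otimes_{\mathbb{S}} HR$ whose induced map on $\pi_0$ is the diagonal inclusion $R \hookrightarrow R \oplus R$; this is exactly the fixed-point inclusion on the corresponding free $C_2$-orbit. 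Assembling over all orbits recovers $F_R(M_i) \hookrightarrow G_R(M_i)$, which is the level-wise action of the natural transformation $F_R \Rightarrow G_R$.

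Finally, taking geometric realisations and applying $\pi_i$ turns this level-wise inclusion into the map on non-abelian derived functors $L_i^{(n)}F_R(M) \to L_i^{(n)}G_R(M)$ induced by $F_R \Rightarrow G_R$, as claimed. The only real content is the level-wise verification above; the main thing to be careful about is confirming that the map we obtain is genuinely the inclusion of fixed points rather than, say, the norm map $1+w$, and this is pinned down by computing the restriction on a single free orbit summand $\Sigma^\infty(C_2)_+ \otimes_{\mathbb{S}} HR$ and observing that the diagonal $R \to R \oplus R$ is the inclusion of fixed points for the swap action on $R \oplus R$.
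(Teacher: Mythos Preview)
Your proposal is correct and follows essentially the same route as the paper's proof: transport the restriction map through the equivalence of Lemma~\ref{lem:expansion_to_em}, then identify the level-wise restriction on $H(M_\bullet\otimes_R M_\bullet,w)$ with the inclusion-of-fixed-points natural transformation $F_R\Rightarrow G_R$. The paper's argument is terser, simply asserting that the restriction on $\abs{H(M_\bullet\otimes_R M_\bullet,w)}$ becomes the map induced by $F_R\Rightarrow G_R$, whereas you spell out the orbit-by-orbit verification (trivial summands give the identity, free summands give the diagonal $R\hookrightarrow R\oplus R$); this extra detail is sound and in fact parallels the analysis the paper later carries out for the genuine-to-geometric map in Lemma~\ref{lem:genuine_to_geometric_expand}.
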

\begin{proof}
  Let $M_\bullet$ be a free simplicial $\mathbb{Z}/4$-module with realisation
  $\Sigma^n\mathbb{Z}/2$. Under the equivalence of Lemma~\ref{lem:expansion_to_em} the restriction map on the $n$-indexed
  summands becomes the restriction map
  \[\pi_i^{C_2}\big(\abs{H(M_\bullet \otimes_{\mathbb{Z}/4} M_\bullet,
      w)}\big) \to \pi_i\big(\abs{H(M_\bullet \otimes_{\mathbb{Z}/4} M_\bullet,
      w)}\big)\]
  which is equivalently the map
  \[\pi_i\big(\abs{H(F_{\mathbb{Z}/4}(M_\bullet))}\big) \to \pi_i\big(\abs{H(G_{\mathbb{Z}/4}(M_\bullet))}\big)\]
    induced by the inclusion of fixed points natural transformation $F_R
    \Rightarrow G_R$. But this is precisely the definition of the map on
    non-abelian derived functors induced by $F_R \Rightarrow G_R$.
\end{proof}

We can straightforwardly extend our computer program to compute this map on non-abelian
derived functors. Moreover there is no problem identifying the source and target
with the appropriate summands of the groups computed in Theorem~\ref{thm:thr_groups}: we
computed $\pi_i((\mathrm{THR}(\mathbb{Z}/4)^{\phi \mathbb{Z}/2})^{C_2})$ using
non-abelian derived functors in the first place, and $\mathbb{Z}/2$ has no
non-trivial automorphisms so for $i \ge 2n$ we can immediately identify
$L_i^{(n)}G_{\mathbb{Z}/4}(\mathbb{Z}/2)$ with the appropriate summand of
$\pi_i(\mathrm{THR}(\mathbb{Z}/4)^{\phi \mathbb{Z}/2})$ (namely the $(n, m, k) =
(n, n,
i-2n)$-indexed summand under the indexing of Proposition~\ref{prop:pi_thr_as_sum}).

It remains to compute the action of the map $r$, which proves to be more involved. Recall our
description of $r$ in Corollary~\ref{cor:r_f_map_def}. Let $M_\ast$ be
the standard free resolution of $\mathbb{Z}/2[n]$ by a chain complex of free $\mathbb{Z}/4$-modules
\begin{equation*}\label{eq:standard_free_res}\dotsb \to \mathbb{Z}/4 \xrightarrow{\cdot 2} \mathbb{Z}/4 \xrightarrow{\cdot 2}
  \mathbb{Z}/4 \to 0 \to \dotsb \mathrm{,}\end{equation*}
and let
  $\sigma : \mathrm{Ch}_{\ge 0}(\mathbb{Z}/4) \to \mathrm{sMod}_{\mathbb{Z}/4}$
be the Dold-Kan equivalence functor recalled in Theorem~\ref{thm:dold_kan},
so we have $\sigma M_{\bullet} \coloneqq \sigma(M_\ast)$ a simplicial $\mathbb{Z}/4$-module with geometric realisation $\Sigma^n H\mathbb{Z}/2$.
Rewriting the description of $r$ in terms of this free simplicial resolution of
$M$, the action of $r$ on the $n$-indexed diagonal component of
$(\mathrm{THR}(\mathbb{Z}/4)^{\phi \mathbb{Z}/2})^{C_2}$ is given by
\begin{equation} \label{eq:r_map_simp_def}
\begin{aligned} \abs{\left(H\mathbb{Z}/4 \otimes_{N_{\{e\}}^{C_2}(H\mathbb{Z}/4)}
    N_{\{e\}}^{C_2}(H\sigma M_{\bullet})\right)^{C_2}} & \to \abs{\left(H\mathbb{Z}/4
    \otimes_{N_{\{e\}}^{C_2}(H\mathbb{Z}/4)} N_{\{e\}}^{C_2}(H\sigma M_{\bullet})\right)^{\phi C_2}}\\
    & \simeq \abs{H\mathbb{Z}/4^{\phi C_2}
      \otimes_{H \mathbb{Z}/4} H\sigma M_{\bullet} }\\
    & \simeq H\mathbb{Z}/4^{\phi C_2}
      \otimes_{H \mathbb{Z}/4} \abs{H\sigma M_{\bullet} }\\
    & \simeq H\mathbb{Z}/4^{\phi C_2}
      \otimes_{H \mathbb{Z}/4} \Sigma^n H\mathbb{Z}/2\\
    & \simeq \bigoplus_{m \ge 0}
      \Sigma^{n+m} H\mathbb{Z}/2 \otimes_{H\mathbb{Z}/4} H\mathbb{Z}/2\\
                                                       & \to \mathrm{THR}(\mathbb{Z}/4)^{\phi \mathbb{Z}/2} \mathrm{,}
\end{aligned}\end{equation}
where the first map is the geometric realisation of the canonical map from genuine to geometric
fixed points, the second map comes from monoidality of geometric fixed
points, and the remaining maps are all the evident canonical maps. The source is
the geometric realisation of a (non-discrete) simplicial spectrum, not just a simplicial abelian group as
before, so we can no longer straightforwardly plug it into the computer; we
will need to use some tricks.

We start by simplifying the map from genuine to geometric fixed points.

\begin{lemma}\label{lem:genuine_to_geometric_expand}
Write $\sigma M_{\bullet} = \bigoplus_{B_{\bullet}} \mathbb{Z}/4$.
  Then we have a commutative diagram of simplicial spectra
  \[\begin{tikzcd}
\left(H\mathbb{Z}/4 \otimes_{N_{\{e\}}^{C_2}(H\mathbb{Z}/4)}
    N_{\{e\}}^{C_2}(H\sigma M_{\bullet})\right)^{C_2} \ar[r] \ar[d, "{\cong}"] & \left(H\mathbb{Z}/4
    \otimes_{N_{\{e\}}^{C_2}(H\mathbb{Z}/4)} N_{\{e\}}^{C_2}(H\sigma
    M_{\bullet})\right)^{\phi C_2} \ar[d, "{\cong}"]\\
  \bigoplus_{(B_\bullet \times B_\bullet)_{C_2}} H\mathbb{Z}/4 \ar[r] &
  \bigoplus_{B_\bullet} H\mathbb{Z}/4^{\phi C_2}
    \end{tikzcd}\]
  where the bottom horizontal map is defined by the projection
  \[\bigoplus_{(B_\bullet \times B_\bullet)_{C_2}} H\mathbb{Z}/4 \to
    \bigoplus_{B^d_\bullet} H\mathbb{Z}/4 \cong \bigoplus_{B_\bullet} H\mathbb{Z}/4\]
  onto the diagonal components of $(B_\bullet \times B_\bullet)_{C_2}$ (recall that we
split the set $(B_i \times B_i)_{C_2}$ into diagonal and off-diagonal parts as
$B_i^d \amalg B^o_i$) composed with
  the map
  \[\bigoplus_{B_\bullet} H\mathbb{Z}/4 \to \bigoplus_{B_\bullet}
    H\mathbb{Z}/4^{\phi C_2}\]
  induced component-wise by the canonical map $H\mathbb{Z}/4 \cong H\mathbb{Z}/4^{C_2} \to
  H\mathbb{Z}/4^{\phi C_2}$. The simplicial structure on the bottom right group
  is such that the isomorphism
  \[\bigoplus_{B_\bullet} H\mathbb{Z}/4^{\phi C_2} \cong H\mathbb{Z}/4^{\phi
      C_2} \otimes_{H\mathbb{Z}/4} \bigoplus_{B_\bullet} H\mathbb{Z}/4 \cong
    H\mathbb{Z}/4^{\phi C_2} \otimes_{H\mathbb{Z}/4} H\sigma M_\bullet\]
  is an isomorphism of simplicial spectra, where we consider
  $H\mathbb{Z}/4^{\phi C_2}$ as an $H\mathbb{Z}/4$-module via the Frobenius
  module structure.
\end{lemma}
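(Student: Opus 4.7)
The plan is to leverage the explicit basis decomposition from the proof of Lemma~\ref{lem:expansion_to_em} and then apply $(-)^{C_2}$ and $(-)^{\phi C_2}$ summand by summand. Writing $\sigma M_\bullet = \bigoplus_{B_\bullet} \mathbb{Z}/4$, I would begin by recalling the $C_2$-equivariant equivalence
\[H\mathbb{Z}/4 \otimes_{N_{\{e\}}^{C_2}(H\mathbb{Z}/4)} N_{\{e\}}^{C_2}(H\sigma M_\bullet) \simeq \bigoplus_{B^d_\bullet} H\mathbb{Z}/4 \,\oplus\, \bigoplus_{B^o_\bullet} \bigl(\Sigma^\infty(C_2)_+ \otimes_{\mathbb{S}} H\mathbb{Z}/4\bigr),\]
in which the diagonal summands carry the trivial $C_2$-action and the off-diagonal summands are induced up from the trivial subgroup.

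Since both $(-)^{C_2}$ and $(-)^{\phi C_2}$ commute with direct sums, I can apply them termwise. The induced summands contribute $H\mathbb{Z}/4$ to the genuine fixed points and vanish under geometric fixed points, whereas the trivially acted diagonal summands contribute $H\mathbb{Z}/4$ genuinely and $H\mathbb{Z}/4^{\phi C_2}$ geometrically. The canonical map from genuine to geometric fixed points therefore kills the off-diagonal summands and acts on each diagonal summand by the canonical map $H\mathbb{Z}/4 \to H\mathbb{Z}/4^{\phi C_2}$. Reassembling the genuine fixed points into $\bigoplus_{(B_\bullet \times B_\bullet)_{C_2}} H\mathbb{Z}/4$ then produces the commutative square of simplicial spectra claimed in the statement.

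What remains is to match the simplicial structure on $\bigoplus_{B_\bullet} H\mathbb{Z}/4^{\phi C_2}$ with the tensor-product structure on $H\mathbb{Z}/4^{\phi C_2} \otimes_{H\mathbb{Z}/4} H\sigma M_\bullet$ coming from the Frobenius module structure of Definition~\ref{def:frobenius}. Levelwise the identification is tautological from freeness of $\sigma M_i$; for compatibility with face and degeneracy maps, I would trace a simplicial operator $f : [m] \to [n]$ through $N_{\{e\}}^{C_2}(H\sigma M_\bullet)$, restrict to the diagonal component, and take geometric fixed points. After commuting geometric fixed points past the norm, the resulting action on the $\mathbb{Z}/4$-coefficients factors as $\mu^{\phi C_2} \circ \Delta$, which by Lemma~\ref{lem:frob_non_derived} is exactly the Frobenius action on $H\mathbb{Z}/4^{\phi C_2}$. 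I expect this last step to be the main obstacle: the direct sum bookkeeping is routine, but recognising the induced $H\mathbb{Z}/4$-action as the Frobenius one (rather than, say, the action pulled back along the canonical map $H\mathbb{Z}/4 \to H\mathbb{Z}/4^{\phi C_2}$) requires carefully analysing what happens when $f$ sends a basis element to a non-trivial $\mathbb{Z}/4$-linear combination of basis elements, which is precisely where the composite $\mu^{\phi C_2} \circ \Delta$ materialises.
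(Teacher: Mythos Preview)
Your proposal is correct and follows essentially the same route as the paper: both start from the explicit diagonal/off-diagonal decomposition established in Lemma~\ref{lem:expansion_to_em}, apply $(-)^{C_2}$ and $(-)^{\phi C_2}$ summand by summand, and then identify the induced $H\mathbb{Z}/4$-action on the geometric fixed points as the Frobenius one. The paper phrases the Frobenius identification slightly more conceptually (observing that the simplicial structure maps act on the copy of $H\mathbb{Z}/4$ inside the norm, which by definition is the Frobenius action), whereas you trace a simplicial operator and invoke Lemma~\ref{lem:frob_non_derived}, but the content is the same.
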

\begin{proof}
  Recalling the proof of Lemma~\ref{lem:expansion_to_em} we have
  an equivalence of simplicial $C_2$-spectra %
  \begin{align*}\left(H\mathbb{Z}/4 \otimes_{N_{\{e\}}^{C_2}(H\mathbb{Z}/4)}
    N_{\{e\}}^{C_2}(H\sigma M_{\bullet})\right)^{C_2} &\cong
    \left(\bigoplus_{B^d_\bullet} H\mathbb{Z}/4 \oplus  \bigoplus_{B^o_\bullet} (C_2)_{+} \otimes_{\mathbb{S}} H\mathbb{Z}/4 \right)^{C_2}\\
    &\cong H\left( \bigoplus_{B_{\bullet} \times B_{\bullet}} \mathbb{Z}/4, w' \right)^{C_2} \\
    &\cong HF_{\mathbb{Z}/4}(\sigma M_\bullet) \mathrm{,}
  \end{align*}
  where $w'$ is the involution on $\bigoplus_{B_\bullet \times B_\bullet}
  \mathbb{Z}/4$ corresponding to interchanging the copies of $B_\bullet$, and $F_{\mathbb{Z}/4}(N) \coloneqq (N \otimes_{\mathbb{Z}/4} N)^{C_2}$.
  We have
  \[F_{\mathbb{Z}/4}((\sigma M)_i) \cong \bigoplus_{(B_{i} \times B_{i})_{C_2}} \mathbb{Z}/4
    \cong \bigoplus_{B^d_i} \mathbb{Z}/4 \oplus \bigoplus_{B^o_i} \mathbb{Z}/4
    \mathrm{,}\]
  and observe that $\bigoplus_{B^d_i} \mathbb{Z}/4$ is a sub-simplicial
  abelian group. In fact if we let $M^0_{\ast}$ be
  the chain complex
  \[\dotsb \mathbb{Z}/4 \xrightarrow{\cdot 0} \mathbb{Z}/4 \xrightarrow{\cdot 0}
    \mathbb{Z}/4 \to 0\]
  (with the lowest degree $\mathbb{Z}/4$ in degree $n$) then
  $\bigoplus_{B^d_i} \mathbb{Z}/4 \cong \sigma M^0_{\bullet}$. However the
  level-wise subgroups $\bigoplus_{B^o_i} \mathbb{Z}/4$ do
  not define a sub-simplicial abelian group, since face maps applied to off-diagonal
  terms can hit diagonal terms. That is, our level-wise decomposition of $\bigoplus_{(B_i \times
    B_i)_{C_2}} \mathbb{Z}/4$ does not give a decomposition of simplicial abelian groups.

  Similarly analysing the geometric fixed points, we get an equivalence of
  simplicial spectra %
  \begin{equation} \label{eq:geo_fixed}
    \begin{aligned}
      \left(H\mathbb{Z}/4 \otimes_{N_{\{e\}}^{C_2}(H\mathbb{Z}/4)}
      N_{\{e\}}^{C_2}(H\sigma M_{\bullet})\right)^{\phi C_2} &\cong
    \left(\bigoplus_{B^d_\bullet} H\mathbb{Z}/4 \oplus  \bigoplus_{B^o_\bullet} \Sigma^\infty(C_2)_{+} \otimes_{\mathbb{S}} H\mathbb{Z}/4 \right)^{\phi C_2}\\
      &\cong \bigoplus_{B^d_\bullet} H\mathbb{Z}/4^{\phi C_2}\\
                                                             &\cong  H\mathbb{Z}/4^{\phi C_2} \otimes_{H \mathbb{Z}/4} H\sigma M_\bullet\mathrm{,}
    \end{aligned}
  \end{equation}
  where in the last line we take $H \mathbb{Z}/4^{\phi C_2}$ with the Frobenius
  $H\mathbb{Z}/4$-module
  structure.
  Note that this time all the off-diagonal components vanish.
  We should briefly explain how the Frobenius module structure arises here. The
  copies of $H\mathbb{Z}/4^{\phi C_2}$ arise from the isomorphism
  \[H\mathbb{Z}/4^{\phi C_2} \cong \left(H \mathbb{Z}/4
      \otimes_{N_{\{e\}}^{C_2}(H\mathbb{Z}/4)} N_{\{e\}}^{C_2}(H \mathbb{Z}/4)\right)^{\phi
        C_2} \mathrm{.}\]
    The simplicial structure maps involve acting (with the usual
    $H\mathbb{Z}/4$-module structure) on the copy of
    $H\mathbb{Z}/4$ contained in the norm on the right hand side. But under the
    isomorphism this is exactly the action in the Frobenius module structure on
    $H\mathbb{Z}/4^{\phi C_2}$.

  Finally observe that under the above equivalences of spectra, the map from genuine to geometric fixed points of $H\mathbb{Z}/4 \otimes_{N_{\{e\}}^{C_2}(H\mathbb{Z}/4)}
  N_{\{e\}}^{C_2}(H\sigma M_{\bullet})$ becomes the map
  \begin{equation*}\bigoplus_{(B_{\bullet} \times B_{\bullet})_{C_2}} H\mathbb{Z}/4 \to \bigoplus_{B_{\bullet}}
    H\mathbb{Z}/4^{\phi C_2}
  \end{equation*}
  as described in the statement of the Lemma.
\end{proof}

    Now we need to understand how the map $r$ acts on homotopy classes
    as computed by our computer
    program. Recall that our program works by computing the homology groups of the
    Moore complex of the simplicial abelian group
    \[F_{\mathbb{Z}/4}(\sigma M_\bullet) \cong \bigoplus_{(B_{\bullet} \times B_{\bullet})_{C_2}}
        \mathbb{Z}/4 \text{.}\]
    Fix some $i \ge n$, and let $\partial_i$ denote the degree $i$ differential
    of the Moore complex. A homology class $[x] \in
    \mathrm{ker}(\partial_i)/\mathrm{im}(\partial_{i+1}) \cong
    \pi_i\left(\abs{\bigoplus_{(B_\bullet \times B_\bullet)_{C_2}}
        H\mathbb{Z}/4}\right)$ is represented by some $x \in
    \mathrm{ker}(\partial_i) \le \bigoplus_{(B_i \times B_i)_{C_2}} \mathbb{Z}/4$.
    We want to determine the image
    $r_\ast([x])$.

    \begin{lemma}
      \begin{enumerate}[(i)]
        \item
      If we take a purely off-diagonal element
  $x \in \mathrm{ker}(\partial_i) \cap \bigoplus_{B^o_i} \mathbb{Z}/4$, then $r_\ast([x]) = 0$.
  \item
    For $x \in \text{ker}(\partial_i)$, $r_\ast([2x]) = 0$.
    \end{enumerate}
  \end{lemma}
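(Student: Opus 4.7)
The plan is to read off both statements directly from the explicit description of $r$ obtained by combining the definition in (\ref{eq:r_map_simp_def}) with the levelwise formula of Lemma~\ref{lem:genuine_to_geometric_expand}. That lemma tells us that the first step of $r$, namely the map from genuine to geometric fixed points, is given at simplicial level $i$ by the composite
\[ \bigoplus_{(B_i \times B_i)_{C_2}} \mathbb{Z}/4 \;\twoheadrightarrow\; \bigoplus_{B^d_i} \mathbb{Z}/4 \;\xrightarrow{\text{can}}\; \bigoplus_{B^d_i} \pi_0(H\mathbb{Z}/4^{\phi C_2}) \]
followed by passing to the geometric realisation of the (Frobenius-module) simplicial spectrum on the right and the further canonical maps landing in $\mathrm{THR}(\mathbb{Z}/4)^{\phi \mathbb{Z}/2}$. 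Since a Moore-complex cycle $x \in \ker(\partial_i)$ represents the class $[x]$ under the standard comparison between simplicial homotopy and Moore-complex homology, its image under $r_\ast$ is obtained by applying this levelwise recipe to $x$ and then tracking the resulting class.

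For part (i), I would simply observe that if $x$ lies entirely in the off-diagonal summand $\bigoplus_{B^o_i} \mathbb{Z}/4$, then the levelwise projection onto $\bigoplus_{B^d_i} \mathbb{Z}/4$ sends $x$ to $0$ on the nose. Thus $x$ is sent to the zero cycle under the first map in the composite defining $r$, and hence $r_\ast([x]) = 0$. The only thing to be careful about is that we are allowed to evaluate the map $x \mapsto r_\ast([x])$ levelwise: this is fine because the maps in (\ref{eq:r_map_simp_def}) after the first are all induced by maps of simplicial spectra (Lemma~\ref{lem:genuine_to_geometric_expand} explicitly records that the isomorphism with $H\mathbb{Z}/4^{\phi C_2} \otimes_{H\mathbb{Z}/4} H\sigma M_\bullet$ is simplicial, with the Frobenius module structure), so they descend to well-defined maps on homotopy groups.

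For part (ii), the argument is even shorter: by Proposition~\ref{prop:pi_thr_as_sum} the target group $\pi_i(\mathrm{THR}(\mathbb{Z}/4)^{\phi \mathbb{Z}/2})$ is a direct sum of copies of $\mathbb{Z}/2$, and in particular is annihilated by $2$. Since $r_\ast$ is a group homomorphism we have
\[ r_\ast([2x]) = 2\, r_\ast([x]) = 0 \text{.} \]
Conceptually this also matches the description from Lemma~\ref{lem:genuine_to_geometric_expand}: the diagonal part of $2x$ is $2$ times a cycle, and the componentwise map $\mathbb{Z}/4 \to \pi_0(H\mathbb{Z}/4^{\phi C_2}) \cong \mathbb{Z}/2$ kills multiples of $2$; the higher-degree analogue of this vanishing is precisely the fact that $H\mathbb{Z}/4^{\phi C_2}$ has $2$-torsion homotopy groups (Lemma~\ref{lem:ab_group_geo_fixed}).

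No step looks like a real obstacle here: the content of the lemma is essentially bookkeeping on top of Lemma~\ref{lem:genuine_to_geometric_expand} and the triviality of $2$ on the target. The only mildly delicate point, which I would want to spell out once, is the passage from ``$x$ projects to $0$ at level $i$'' to ``$r_\ast([x]) = 0$'' — i.e., that the remaining maps in (\ref{eq:r_map_simp_def}) are genuinely induced by maps of simplicial spectra, so that homology/homotopy classes can be represented and tracked levelwise.
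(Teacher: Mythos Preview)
Your proof is correct and follows essentially the same approach as the paper: for (i) you use the factorisation through the projection onto the diagonal from Lemma~\ref{lem:genuine_to_geometric_expand}, and for (ii) you use that the target is $2$-torsion so $r_\ast([2x]) = 2r_\ast([x]) = 0$. The paper's proof is identical in substance, just more terse.
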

  \begin{proof}
  Part (i) is clear from the description of the map in
  Lemma~\ref{lem:genuine_to_geometric_expand}. To be fully rigorous, consider the simplicial map from the
  $i$-simplex $\Delta^i \to \bigoplus_{(B_\bullet \times B_\bullet)_{C_2}}
  H\mathbb{Z}/4$ representing $[x]$, and observe that the postcomposition with
  the projection onto the diagonal $\bigoplus_{B^d_\bullet} H\mathbb{Z}/4$ is the
  zero map.

  For (ii), observe that $\pi_i(\mathrm{THR}(\mathbb{Z}/4)^{\phi \mathbb{Z}/2})$ is
  entirely $2$-torsion, so for any $x \in \mathrm{ker}(\partial_i)$ we have
  $r_\ast([2x]) = 2r_\ast([x]) = 0$.
  \end{proof}
\begin{remark}\label{rem:suffices_residue_class}
  So for a general $x \in \mathrm{ker}(\partial_i)$, in order to compute
  $r_\ast([x])$ it suffices to analyse the residue class of $x$ in
  \[\mathrm{ker}(\partial_i)/(\mathrm{im}(\partial_{i+1}) +
    \mathrm{ker}(\partial_i)\cap \bigoplus_{B^o_i} \mathbb{Z}/4 + 2\mathrm{ker}(\partial_i)) \mathrm{.}\]
\end{remark}

  Let $e_{(u, v)} \in \bigoplus_{(B_i \times B_i)_{C_2}}
  \mathbb{Z}/4$ denote the basis element corresponding to $(u, v) \in (B_i \times
  B_i)_{C_2}$. Recalling the Dold-Kan equivalence of Theorem~\ref{thm:dold_kan}, explicitly $B_i$ is the set of surjective
  maps $[i] \twoheadrightarrow [j]$ where $n \le j \le i$, so there is an element
  $\mathrm{id}_{[i]} \in B_i$ corresponding to the identity map.

  \begin{lemma} \label{lem:quotient_isomorphism}
    We have an isomorphism
    \[\mathrm{ker}(\partial_i)/(\mathrm{im}(\partial_{i+1}) +
      \mathrm{ker}(\partial_i)\cap \bigoplus_{B^o_i} \mathbb{Z}/4 + 2\mathrm{ker}(\partial_i)) \cong \mathbb{Z}/2 \mathrm{,}\]
    where the left hand side is generated by the residue class of $e_{(\mathrm{id}_{[i]}, \mathrm{id}_{[i]})}$.

    More precisely, let $\theta : \bigoplus_{(B_i \times B_i)_{C_2}} \mathbb{Z}/4 \to \mathbb{Z}/2$
    be the map that extracts the coefficient of $e_{(\mathrm{id}_{[i]},
      \mathrm{id}_{[i]})}$ and reduces it mod $2$. Then $\theta
    \!\mid_{\mathrm{ker}(\partial_i)} : \mathrm{ker}(\partial_i) \to \mathbb{Z}/2$
    exhibits the above quotient.
  \end{lemma}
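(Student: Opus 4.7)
The plan is to factor $\theta|_{\ker(\partial_i)}$ through $\pi_i(|\bar V|) \cong \mathbb{Z}/2$ via the reduction $S_\bullet \twoheadrightarrow \bar V_\bullet$, where $\bar V_\bullet := S_\bullet/(U_\bullet + 2 S_\bullet)$, and $S_\bullet = \bigoplus_{(B_\bullet \times B_\bullet)_{C_2}} \mathbb{Z}/4$ is the simplicial $\mathbb{Z}/4$-module appearing in the lemma statement.

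First I would verify that $U_\bullet + 2 S_\bullet$ is a sub-simplicial $\mathbb{Z}/4$-module of $S_\bullet$. The key observation is that when the face $d_k$ is applied to an off-diagonal invariant generator $e_{(u,v)} + e_{(v,u)}$ (for $u \ne v$), the coefficient of any diagonal basis element $e_{(u', u')}$ in the resulting expression is always twice a product of coefficients from $d_k e_u$ and $d_k e_v$, and therefore lies in $2 S_{i-1}$. The quotient $\bar V_\bullet \cong \bigoplus_{B_\bullet} \mathbb{Z}/2$ is naturally a simplicial $\mathbb{Z}/2$-vector space, which I would identify with $\sigma(M/2)_\bullet$, where $M/2 := M \otimes \mathbb{Z}/2$ is the chain complex with $\mathbb{Z}/2$ in each degree $\ge n$ and zero differentials: since every coefficient appearing in a face map of $\sigma M_\bullet$ takes values in $\{0, 1, 2\} \subset \mathbb{Z}/4$ and $c^2 \equiv c \pmod 2$ for all such $c$, the induced simplicial structure on $\bar V_\bullet$ matches that of $\sigma(M/2)_\bullet$. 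Consequently $\pi_i(|\bar V|) \cong H_i(M/2) \cong \mathbb{Z}/2$ for $i \ge n$, generated by the class of $e_{\mathrm{id}_{[i]}}$.

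Second I would check that the induced map $\beta : \ker(\partial_i) \to \pi_i(|\bar V|)$, obtained by reducing modulo $U_i + 2 S_i$ and taking homology classes, vanishes on each of the three subgroups appearing in the denominator of the target quotient: on $\mathrm{im}(\partial_{i+1})$ because $\beta$ is induced by a chain map, and on $U_i \cap \ker(\partial_i)$ and on $2 \ker(\partial_i)$ because $U_i$ and $2 S_i$ are killed in $\bar V_i$. Thus $\beta$ descends, and under the identification of $\pi_i(|\bar V|)$ with $\mathbb{Z}/2$ by extraction of the $e_{\mathrm{id}_{[i]}}$-coefficient (which descends to $\pi_i(|\bar V|)$ since, by direct calculation, boundaries in $\sigma(M/2)_\bullet$ have trivial $e_{\mathrm{id}_{[i]}}$-coefficient) it coincides with $\theta$. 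Surjectivity is immediate: $e_{(\mathrm{id}_{[i]}, \mathrm{id}_{[i]})}$ is a cycle (since $d_k e_{\mathrm{id}_{[i]}} \in 2 M_{i-1}$ forces $d_k e_{(\mathrm{id}_{[i]}, \mathrm{id}_{[i]})} \in 4 S_{i-1} = 0$) that maps to the generator of $\pi_i(|\bar V|)$.

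For injectivity I would apply the long exact sequence in homotopy attached to the short exact sequence $0 \to U + 2S \to S \to \bar V \to 0$. If $\beta(x) = 0$ for some $x \in \ker(\partial_i)$, then by exactness the class of $x$ in $\pi_i(|S|)$ lies in the image from $\pi_i(|U + 2S|)$, so $x = \partial y + z$ for some $y \in S_{i+1}$ and some cycle $z \in (U_i + 2 S_i) \cap \ker(\partial_i)$. The main obstacle is then to rewrite such a $z$ as $u + 2 s$ with $u \in U_i \cap \ker(\partial_i)$ and $s \in \ker(\partial_i)$. Decomposing $z = u_0 + 2 s_0^d$ with $u_0 \in U_i$ and $s_0^d$ in the diagonal summand, the cycle condition $\partial u_0 + 2 \partial s_0^d = 0$ combined with the factor-of-$2$ observation on off-diagonal-to-diagonal transitions forces $\partial u_0 \in 2 S_{i-1}$, and one must then adjust $u_0$ (modifying $y$ and $s_0^d$ to compensate) so that it becomes an honest cycle in $U_i$. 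This final bookkeeping of the interaction between the diagonal and off-diagonal parts of the boundary map is the most delicate part of the argument.
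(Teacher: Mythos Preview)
Your proposal is correct in outline and runs parallel to the paper's argument: both hinge on the observation that face maps send off-diagonal generators to the diagonal only via multiples of $2$, both identify the resulting quotient with $\sigma(M_\ast/2)$ whose $i$th homology is $\mathbb{Z}/2$ generated by $e_{\mathrm{id}_{[i]}}$, and both reduce injectivity to a residual correction step. Your packaging via the short exact sequence $0\to U+2S\to S\to \bar V\to 0$ and its long exact sequence is a mild reorganisation of the paper's direct element-chase, not a different idea.

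The step you flag as ``most delicate'' and leave unfinished is precisely the content of the paper's proof. Having written $z=u_0+2s_0^d$ with $\partial z=0$, you need an explicit mechanism to kill the diagonal part of $z$ by a boundary. The paper supplies this for $i\ge 1$: for each $a\in B_i$ it produces distinct $a',a''\in B_{i+1}$ with $\partial_{i+1}(e_{(a',a'')})=2e_{(a,a)}+h$ and $h\in U_{i-1}$, so a suitable combination $y''\in U_{i+1}$ has diagonal part of $\partial y''$ equal to $2s_0^d$, whence $z-\partial y''\in U_i\cap\ker\partial_i$ and the decomposition follows. Note this construction is unavailable at $i=0$, which the paper treats separately by the trivial observation $\ker\theta=2\mathbb{Z}/4=2\ker\partial_0$; your argument should make the same case split. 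One notational remark: in the paper's conventions the off-diagonal generators are already single elements $e_{(u,v)}$ indexed by orbits in $(B_i\times B_i)_{C_2}$, corresponding to $e_u\otimes e_v+e_v\otimes e_u$ inside $(M\otimes M)^{C_2}$, so your ``$e_{(u,v)}+e_{(v,u)}$'' should just read $e_{(u,v)}$.
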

  \begin{proof}
  Note that $e_{(\mathrm{id}_{[i]}, \mathrm{id}_{[i]})} \in \mathrm{ker}(\partial_i)$, so
  the restriction of $\theta$ to $\mathrm{ker}(\partial_i)$ remains surjective.
We need to show that
\[\mathrm{im}(\partial_{i+1}) + \mathrm{ker}(\partial_i) \cap \bigoplus_{B^o_i} \mathbb{Z}/4 + 2\mathrm{ker}(\partial_i)=
  \mathrm{ker}(\theta \!\mid_{\mathrm{ker}(\partial_i)}) \mathrm{.}\]
It is straightforward to check that the left hand side
is contained in the right hand side. It remains to show that every element
of the right hand side is contained in the left hand side.

First consider $i = 0$. Then $\partial_i$ is the zero-map and
$\mathrm{ker}(\partial_i) = \bigoplus_{(B_0 \times B_0)_{C_2}} \mathbb{Z}/4 \cong
\mathbb{Z}/4$. We see that $\mathrm{ker}(\theta\!\mid_{\mathrm{ker}(\partial_i)}) =
\mathrm{ker}(\theta) = 2\mathbb{Z}/4$ and $2 \mathrm{ker}(\partial_i) =
2\mathbb{Z}/4$ so the right hand side is contained in the left hand side as desired.

Otherwise we have $i \ge 1$. Let $x \in
\mathrm{ker}(\theta \!\mid_{\mathrm{ker}(\partial_i)})$ i.e.\ $x$ is an element
of $\mathrm{ker}(\partial_i)$ with an even coefficient of $e_{(\mathrm{id}_{[i]}, \mathrm{id}_{[i]})}$. We want to find $y \in
\bigoplus_{(B_{i+1} \times B_{i+1})_{C_2}} \mathbb{Z}/4$ such that the
diagonal part of $\partial_{i+1}(y)$ agrees with that of $x$, then $x -
\partial_{i+1}(y) \in \mathrm{ker}(\partial_i) \cap \bigoplus_{B^o_i}
\mathbb{Z}/4$ so $x \in \text{im}(\partial_{i+1}) +
\text{ker}(\partial_i)\cap\bigoplus_{B_i^o} \mathbb{Z}/4$ and we're done.

Write $x = x^d + x^o$ where $x^d \in \bigoplus_{B^d_i} \mathbb{Z}/4$ is the diagonal part and $x^o \in
\bigoplus_{B^o_i} \mathbb{Z}/4$ is the off-diagonal part. Consider what happens if we reduce everything
modulo $2$, which we denote with an overline. We get an element
\[\overline{x} = \overline{x}^d + \overline{x}^o \in \bigoplus_{B^d_i} \mathbb{Z}/2 \oplus
  \bigoplus_{B^o_i} \mathbb{Z}/2 \cong \bigoplus_{(B_i \times B_i)_{C_2}} \mathbb{Z}/2\mathrm{.}\]
Note the coefficient of $\overline{e}_{(\mathrm{id}_{[i]}, \mathrm{id}_{[i]})}$ in $\overline{x}$ is zero. Explicitly considering the face maps in $\bigoplus_{(B_\bullet \times
  B_\bullet)_{C_2}} \mathbb{Z}/4$ shows that the on-diagonal part of
$\partial_i(x^o)$ is a multiple of $2$, and we know
$\partial_i(x^o)+\partial_i(x^d) = \partial_i(x) = 0$, so $\partial_i(x^d)$ must
also be zero modulo $2$. That is, $\overline{x}^d$ is in the
kernel of the Moore boundary map for the simplicial abelian group
$\bigoplus_{B^d_\bullet} \mathbb{Z}/2 \cong \sigma M^0_\bullet / 2 \sigma M^0_\bullet$. However
the Dold-Kan isomorphism $\sigma$ commutes with reducing mod $2$, so this is
the simplicial abelian group $\sigma (M^0_\ast / 2M^0_\ast)$, where we observe
$M^0_\ast / 2M^0_\ast$ is the chain complex with $\mathbb{Z}/2$ in every
degree $\ge n$ and zero boundary maps. $M^0_\ast / 2M^0_\ast$ has homology
$\mathbb{Z}/2$ in every degree $\ge n$, so by Dold-Kan the $i$th homology group of the Moore complex
of $\bigoplus_{B^d_\bullet} \mathbb{Z}/2$ is also $\mathbb{Z}/2$. Moreover the
quotient defining this homology group
\[\mathrm{ker}\bigg(\partial_i : \bigoplus_{B^d_i} \mathbb{Z}/2 \to
\bigoplus_{B^d_{i-1}} \mathbb{Z}/2\bigg)\bigg/\mathrm{im}\bigg(\partial_{i+1} :
\bigoplus_{B^d_{i+1}} \mathbb{Z}/2 \to
\bigoplus_{B^d_i} \mathbb{Z}/2\bigg) \cong \mathbb{Z}/2\]
is explicitly given by extracting the
coefficient of $\overline{e}_{(\mathrm{id}_{[i]}, \mathrm{id}_{[i]})}$. Hence $\overline{x}^d$ represents zero
in homology, so there exists $\overline{y} \in \bigoplus_{B^d_{i+1}} \mathbb{Z}/2$ such that
$\partial_{i+1}(\overline{y}) = \overline{x}$.

Pick $y'$ a lift of $\overline{y}$ to $\bigoplus_{B^d_{i+1}} \mathbb{Z}/4$.
Including $\bigoplus_{B^d_{i+1}} \mathbb{Z}/4$ in
$\bigoplus_{(B_{i+1} \times B_{i+1})_{C_2}} \mathbb{Z}/4$, we can consider $y'$
an element of $\bigoplus_{(B_{i+1} \times B_{i+1})_{C_2}} \mathbb{Z}/4$. Then
$x^d$ and $\partial_{i+1}(y')$ agree modulo $2$, that is, $x^d -
\partial_{i+1}(y') = 2z$ for some $z \in \bigoplus_{B^d_i} \mathbb{Z}/4$.

Consider some
element of $B_i$. An element of
$B_i$ corresponds to a surjective map $[i] \twoheadrightarrow [j]$ for some $j$;
if this map sends $k \mapsto a_k$ then let us denote the corresponding element
of $B_i$ by the tuple
of values $a = (a_0, \dotsc, a_{i}=j)$. Let $a' = (a_0, a_0, a_1, a_2, \dotsc, a_{i}) \in B_{i+1}$ and $a'' =
(a_0, a_1, a_1, a_2, \dotsc, a_{i}) \in B_{i+1}$ (recall we are considering the
case where $i \ge 1$), and note that $a' \neq a''$.
Then $\partial_{i+1}(e_{(a', a'')}) = 2e_{(a, a)} + h$ where $h \in
\bigoplus_{B^o_i} \mathbb{Z}/4$ is
purely off-diagonal. By adding up elements of this form according to the
coefficients of $z$, we can find $y'' \in \bigoplus_{B^o_{i+1}} \mathbb{Z}/4$ such that
the on-diagonal part of $\partial_{i+1}(y'')$ equals $2z$.

Let $y = y' + y''$. Then $x^{d} - \partial_{i+1}(y)$ is purely off-diagonal, so $x - \partial_{i+1}(y) \in \mathrm{ker}(\partial_i)
\cap \bigoplus_{B^o_i} \mathbb{Z}/4$ as desired and the claim is proved.
\end{proof}

\begin{remark}
  We have used our computer program to double check this lemma for all values
  of $n$ and $i$ relevant for our later calculations (i.e.\ $0 \le n \le i \le
  6$). Indeed this was how we originally realised that the lemma might be
  true.
\end{remark}

\begin{proposition} \label{prop:r_on_diag_comp}
  Recall that $x$ represents a homology class $[x] \in
  \mathrm{ker}(\partial_i)/\mathrm{im}(\partial_{i+1})$, which by
  Corollary~\ref{cor:F_non_ab} we are identifying with the degree $i$ homotopy
  of the $n$-indexed diagonal component of $(\mathrm{THR}(\mathbb{Z}/2)^{\phi
    \mathbb{Z}/2})^{C_2}$. If the coefficient of $e_{(\mathrm{id}_{[i]},
    \mathrm{id}_{[i]})}$ in $x$ is even then $r_\ast([x]) = 0$. Otherwise
  $r_\ast([x])$ considered
  as an element of
\[\pi_i(\mathrm{THR}(\mathbb{Z}/2)^{\phi \mathbb{Z}/2}) \cong \bigoplus_{\substack{n',m,k
    \ge 0\\n'+m+k=i}} \mathbb{Z}/2\]
is the generator of the copy of $\mathbb{Z}/2$ indexed by $(n, 0, i-n)$, where
the above decomposition of $\pi_i(\mathrm{THR}(\mathbb{Z}/2)^{\phi
  \mathbb{Z}/2})$ is that of Proposition~\ref{prop:pi_thr_as_sum} (except using $n'$
as an index instead of $n$, since we are already using $n$).
\end{proposition}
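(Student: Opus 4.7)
The plan is to first dispose of the easy case and then trace the class $[e_{(\mathrm{id}_{[i]}, \mathrm{id}_{[i]})}]$ step by step through the composition defining $r$ in equation (\ref{eq:r_map_simp_def}).

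For the first case, recall from the unnamed lemma that precedes Remark~\ref{rem:suffices_residue_class} that $r_\ast$ vanishes on homology classes represented by purely off-diagonal elements and on classes of the form $[2y]$, and so (as noted in that remark) $r_\ast$ factors through the quotient of $\ker(\partial_i)$ by $\mathrm{im}(\partial_{i+1}) + \ker(\partial_i)\cap \bigoplus_{B^o_i} \mathbb{Z}/4 + 2 \ker(\partial_i)$. By Lemma~\ref{lem:quotient_isomorphism} this quotient is $\mathbb{Z}/2$, detected by the parity of the coefficient of $e_{(\mathrm{id}_{[i]}, \mathrm{id}_{[i]})}$, so when this coefficient is even we immediately get $r_\ast([x]) = 0$, and in the odd case it suffices to compute $r_\ast([e_{(\mathrm{id}_{[i]}, \mathrm{id}_{[i]})}])$.

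Applying the first two steps of equation (\ref{eq:r_map_simp_def}), Lemma~\ref{lem:genuine_to_geometric_expand} identifies the composition of the genuine-to-geometric map with the equivalence to $H\mathbb{Z}/4^{\phi C_2} \otimes_{H\mathbb{Z}/4} H\sigma M_\bullet$ (with Frobenius structure on $H\mathbb{Z}/4^{\phi C_2}$) as the level-wise projection onto the diagonal basis followed by the canonical map $H\mathbb{Z}/4 \to H\mathbb{Z}/4^{\phi C_2}$. Thus $e_{(\mathrm{id}_{[i]}, \mathrm{id}_{[i]})}$ is sent to $1 \otimes e_{\mathrm{id}_{[i]}}$, where $1 \in \pi_0(H\mathbb{Z}/4^{\phi C_2})$ is the image of the unit. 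Now pass to realizations. Since $M_\ast$ is the free resolution of $\mathbb{Z}/2[n]$ by copies of $\mathbb{Z}/4$ with differential $\cdot 2$, the normalised chain complex of the simplicial $H\mathbb{Z}/4$-module $H\sigma M_\bullet$ is the chain complex of $H\mathbb{Z}/4$-module spectra with $H\mathbb{Z}/4$ in every degree $\geq n$ and differential $\cdot 2$; tensoring over $H\mathbb{Z}/4$ with $H\mathbb{Z}/4^{\phi C_2}$ yields the chain complex whose differentials are multiplication by $2$ on $H\mathbb{Z}/4^{\phi C_2}$. By Lemma~\ref{lem:ab_group_geo_fixed} these differentials are zero on $\pi_\ast$, so the homotopy of the realisation $H\mathbb{Z}/4^{\phi C_2} \otimes_{H\mathbb{Z}/4} \Sigma^n H\mathbb{Z}/2$ in degree $i$ splits as $\bigoplus_{j = 0}^{i-n} \pi_{i-n-j}(H\mathbb{Z}/4^{\phi C_2})$, with the summand of chain degree $j$ contributed by the normalised $j$-cell of $M_\bullet$. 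The class $1 \otimes e_{\mathrm{id}_{[i]}}$ therefore represents the element $1 \in \pi_0(H\mathbb{Z}/4^{\phi C_2})$ placed in chain degree $i$, i.e.\ the summand with chain degree $j = i - n$ and $\pi_0$-contribution.

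Finally, applying the splitting of Lemma~\ref{lem:z4_phiz2_split}, the unit $1 \in \pi_0(H\mathbb{Z}/4^{\phi C_2})$ is precisely the generator of the $m = 0$ summand $H\mathbb{Z}/2$, so our class is the generator of the $(m, k) = (0, i-n)$ summand of $\bigoplus_{m \geq 0} \Sigma^{n+m} H\mathbb{Z}/2 \otimes_{H\mathbb{Z}/4} H\mathbb{Z}/2$. The last map of (\ref{eq:r_map_simp_def}) is inclusion into the corresponding summand of $\mathrm{THR}(\mathbb{Z}/4)^{\phi \mathbb{Z}/2}$ in the decomposition of Remark~\ref{rem:thr_underlying_decomp}: the outer $n$ of $\bigoplus_m \Sigma^{n+m}$ is the first index in that decomposition, which becomes the index $n'$ in Proposition~\ref{prop:pi_thr_as_sum}, while the $m$ from the Lemma~\ref{lem:z4_phiz2_split} splitting and the $k$ from the free resolution of $H\mathbb{Z}/2 \otimes_{H\mathbb{Z}/4} H\mathbb{Z}/2$ furnish the remaining two indices. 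The answer is thus the generator of the $(n, 0, i-n)$ summand, as claimed.

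The only delicate points are (i) verifying that the unit $1 \in \pi_0 H\mathbb{Z}/4^{C_2}$ indeed lands in the $m = 0$ summand after the Frobenius-module splitting (which is straightforward since $\pi_0 H\mathbb{Z}/4^{\phi C_2} \cong \mathbb{Z}/2$ is concentrated in a single summand under the splitting and is hit by the canonical map), and (ii) matching the indexing $m$ arising inside the definition of $r$ with the indexing $m$ in Proposition~\ref{prop:pi_thr_as_sum}'s decomposition, which holds because the same splitting from Lemma~\ref{lem:z4_phiz2_split} (used uniformly to describe $H\mathbb{Z}/4^{\phi C_2}$ as an $H\mathbb{Z}/4$-module with Frobenius structure) governs both.
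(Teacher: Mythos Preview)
Your proof follows the same overall approach as the paper: reduce to tracking the single element $e_{(\mathrm{id}_{[i]},\mathrm{id}_{[i]})}$ through the composite (\ref{eq:r_map_simp_def}), and then identify where it lands in the $(n',m,k)$-indexed decomposition. The reduction step and the final bookkeeping are fine.

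There is however one gap. You argue that after tensoring the normalised chain complex of $H\sigma M_\bullet$ with $H\mathbb{Z}/4^{\phi C_2}$, the differentials are multiplication by $2$ and hence ``zero on $\pi_\ast$'' by Lemma~\ref{lem:ab_group_geo_fixed}, and you immediately conclude that the homotopy of the realisation splits as $\bigoplus_{j=0}^{i-n}\pi_{i-n-j}(H\mathbb{Z}/4^{\phi C_2})$. But a chain complex of spectra whose differentials vanish on homotopy groups need not split: there can be higher differentials or nontrivial extensions in the associated spectral sequence. What you actually need is that the differentials are \emph{null as maps of spectra}, and this follows not from Lemma~\ref{lem:ab_group_geo_fixed} but from Lemma~\ref{lem:z4_phiz2_split}: the Frobenius $H\mathbb{Z}/4$-module structure on $H\mathbb{Z}/4^{\phi C_2}$ factors through $H\mathbb{Z}/2$, so the action of $2\in\mathbb{Z}/4$ is literally zero. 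The paper makes this point transparent by introducing the auxiliary chain complex $M^0_\ast$ with zero differentials, observing that $\sigma M_\bullet$ and $\sigma M^0_\bullet$ agree modulo $2$, and working with the manifestly split $\abs{H\sigma M^0_\bullet}\cong\bigoplus_{k\ge 0}\Sigma^{n+k}H\mathbb{Z}/4$. Once you replace your invocation of Lemma~\ref{lem:ab_group_geo_fixed} by Lemma~\ref{lem:z4_phiz2_split}, your argument goes through and is essentially the paper's.
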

\begin{proof}
By Remark~\ref{rem:suffices_residue_class} and
Lemma~\ref{lem:quotient_isomorphism} we deduce that for $x \in \mathrm{ker}(\partial_i)$, if the coefficient of
$e_{(\mathrm{id}_{[i]}, \mathrm{id}_{[i]})}$ in $x$ is odd then $r_\ast([x]) =
r_\ast([e_{(\mathrm{id}_{[i]}, \mathrm{id}_{[i]})}])$ and otherwise $r_\ast([x])
= 0$. Hence it only remains to compute $r_\ast([e_{(\mathrm{id}_{[i]}, \mathrm{id}_{[i]})}])$.

 Observe that the inclusion
  $\bigoplus_{B^d_i} \mathbb{Z}/4 \hookrightarrow \bigoplus_{(B_i \times B_i)_{C_2}} \mathbb{Z}/4$ takes
  the basis element $e_{\mathrm{id}_{[i]}}
  \in \bigoplus_{B_i} \mathbb{Z}/4 \cong \bigoplus_{B^d_i} \mathbb{Z}/4$
  to $e_{(\mathrm{id}_{[i]}, \mathrm{id}_{[i]})}$. This basis element $e_{\mathrm{id}_{[i]}}$ is
  in the kernel of the Moore differential on $\bigoplus_{B^d_\bullet} \mathbb{Z}/4 \cong \sigma
  M^0_\bullet$, and in fact represents the generator of $\pi_i(\abs{H\sigma
    M^0_\bullet}) \cong H_i(M^0_\ast) \cong \mathbb{Z}/4$.
  So $r_\ast([e_{(\mathrm{id}_{[i]}, \mathrm{id}_{[i]})}])$ is the image of $1
  \in \mathbb{Z}/4 \cong \pi_i\left(\abs{\bigoplus_{B^d_\bullet} H\mathbb{Z}/4}\right)$
  under the map on $\pi_i$ induced by
  \begin{equation} \label{eq:apply_to_id_i_map} \begin{aligned} &\Bigg\lvert\bigoplus_{B^d_\bullet} H\mathbb{Z}/4\Bigg\rvert \to \Bigg\lvert \bigoplus_{(B_\bullet \times
        B_\bullet)_{C_2}} H\mathbb{Z}/4 \Bigg\rvert \to
    \Bigg\lvert \bigoplus_{B_\bullet} H\mathbb{Z}/4^{\phi
          C_2} \Bigg\rvert\\
      &\xrightarrow{\cong} \Bigg\lvert H\mathbb{Z}/4^{\phi C_2} \otimes_{H\mathbb{Z}/4} H\sigma M_\bullet \Bigg\rvert \hookrightarrow
  \mathrm{THR}(\mathbb{Z}/4)^{\phi \mathbb{Z}/4}
\mathrm{,}\end{aligned} \end{equation}
  where as discussed in Lemma~\ref{lem:genuine_to_geometric_expand}
  the composite of the first two maps is the geometric realisation of the simplicial map
  \begin{equation*}%
    \bigoplus_{B^d_\bullet}
    H\mathbb{Z}/4 \cong \bigoplus_{B_\bullet} H\mathbb{Z}/4 \to
    \bigoplus_{B_\bullet} (H\mathbb{Z}/4)^{\phi C_2} \end{equation*}
  given component-wise by $H\mathbb{Z}/4 \to H\mathbb{Z}/4^{\phi C_2}$, and the final inclusion
  is as described in (\ref{eq:r_map_simp_def}).

Under the decomposition of $H\mathbb{Z}/4$-modules $H\mathbb{Z}/4^{\phi C_2}
\simeq \bigoplus_{m \ge 0} \Sigma^m H\mathbb{Z}/2$ we see
\begin{align*}{H\mathbb{Z}/4}^{\phi C_2} \otimes_{H\mathbb{Z}/4}
  H\sigma M_\bullet &\simeq \bigoplus_{m \ge 0} \Sigma^m H\mathbb{Z}/2 \otimes_{H\mathbb{Z}/4} H\sigma M_\bullet\\
                     &\cong \bigoplus_{m \ge 0} \Sigma^m H\mathbb{Z}/2 \otimes_{H\mathbb{Z}/4} H\sigma M^0_\bullet
\end{align*}
where we use the fact that $\sigma M_\bullet$ and $\sigma M^0_\bullet$ are
the same modulo $2$.
Now evaluating the geometric realisations we have
\[\Bigg\lvert \bigoplus_{B^d_\bullet} H\mathbb{Z}/4
  \Bigg\rvert \cong \Big\lvert H\sigma M^0_\bullet \Big\rvert \cong \bigoplus_{k \ge 0} \Sigma^{n+k} H\mathbb{Z}/4\]
and
\begin{align*}
  \Bigg\lvert \bigoplus_{m \ge 0} \Sigma^m H\mathbb{Z}/2 \otimes_{H\mathbb{Z}/4} H\sigma M^0_\bullet \Bigg\rvert
  &\cong \bigoplus_{m \ge 0} \Sigma^m H\mathbb{Z}/2 \otimes_{H\mathbb{Z}/4} \Big\lvert H\sigma M^0_\bullet \Big\rvert\\
  &\cong \bigoplus_{m \ge 0} \Sigma^m H\mathbb{Z}/2 \otimes_{H\mathbb{Z}/4} \bigoplus_{k \ge 0} \Sigma^{n+k} H\mathbb{Z}/4\\
  &\cong \bigoplus_{k \ge 0} \Sigma^{n+k} \bigoplus_{m \ge 0} \Sigma^m H\mathbb{Z}/2 \mathrm{.}
\end{align*}

Under these isomorphisms, the map (\ref{eq:apply_to_id_i_map}) becomes
\[\bigoplus_{k \ge 0} \Sigma^{n+k} H\mathbb{Z}/4 \to \bigoplus_{k \ge 0}
  \Sigma^{n+k} \bigoplus_{m \ge 0} \Sigma^m H\mathbb{Z}/2 \hookrightarrow
  \bigoplus_{n',m,k \ge 0} \Sigma^{n'+m+k} H\mathbb{Z}/2 \simeq
  \mathrm{THR}(\mathbb{Z}/2)^{\phi \mathbb{Z}/2}\]
where the first map is induced by $H\mathbb{Z}/4 \to H\mathbb{Z}/4^{\phi C_2}
\cong \bigoplus_{m \ge 0} \Sigma^m H\mathbb{Z}/4$, the inclusion includes the
$n'=n$ components, and the final equivalence is as described in Section~\ref{sec:computation_of_homotopy}. The map
$H\mathbb{Z}/4 \to \bigoplus_{m \ge 0} \Sigma^m H\mathbb{Z}/4$ induces a quotient
$\mathbb{Z}/4 \twoheadrightarrow \mathbb{Z}/2$ on $\pi_0$. Under
$\big\lvert\! \bigoplus_{B^d_\bullet} H\mathbb{Z}/4 \big\rvert \cong \bigoplus_{k \ge 0} \Sigma^{n+k}H\mathbb{Z}/4$ the homotopy group
$[e_{\mathrm{id}_{[i]}}]$ becomes the generator of $\pi_i(\Sigma^{n+k} H\mathbb{Z}/4)$ for $k =
i-n$. So its image under (\ref{eq:apply_to_id_i_map}) is the generator of
$\pi_i(\Sigma^{n'+m+k}H\mathbb{Z}/2)$ for $n' = n$, $m = 0$ and $k = i-n$.
\end{proof}

We are now ready to exploit the long exact sequence for the homotopy equaliser
of $r-f$.

\begin{theorem} \label{thm:tcr_ses_low_deg}
  We have short exact sequences
\begin{align*}
  0 \to \mathbb{Z}/2 \to &\pi_{-1}(\mathrm{TCR}(\mathbb{Z}/4)^{\phi \mathbb{Z}/2}) \to 0\\
  0 \to &\pi_{0}(\mathrm{TCR}(\mathbb{Z}/4)^{\phi \mathbb{Z}/2}) \to \mathbb{Z}/4 \to 0\\
  0 \to \mathbb{Z}/2 \to &\pi_{1}(\mathrm{TCR}(\mathbb{Z}/4)^{\phi \mathbb{Z}/2}) \to \mathbb{Z}/2 \to 0\\
  0 \to \mathbb{Z}/2 \to &\pi_{2}(\mathrm{TCR}(\mathbb{Z}/4)^{\phi \mathbb{Z}/2}) \to (\mathbb{Z}/2)^2 \to 0\\
  0 \to (\mathbb{Z}/2)^2 \to &\pi_{3}(\mathrm{TCR}(\mathbb{Z}/4)^{\phi \mathbb{Z}/2}) \to (\mathbb{Z}/2)^4 \to 0\\
  0 \to (\mathbb{Z}/2)^4 \to &\pi_{4}(\mathrm{TCR}(\mathbb{Z}/4)^{\phi \mathbb{Z}/2}) \to (\mathbb{Z}/2)^6 \oplus \mathbb{Z}/4 \to 0\\
  0 \to (\mathbb{Z}/2)^9 \to &\pi_{5}(\mathrm{TCR}(\mathbb{Z}/4)^{\phi \mathbb{Z}/2}) \to (\mathbb{Z}/2)^{10} \to 0 \mathrm{.}
\end{align*}
\end{theorem}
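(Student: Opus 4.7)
The homotopy equaliser formula of Theorem~\ref{thm:tcr_eq} produces a fibre sequence, whose associated long exact sequence in homotopy reads
\begin{equation*}\dotsb \to \pi_{i+1}(\mathrm{THR}(\mathbb{Z}/4)^{\phi \mathbb{Z}/2}) \to \pi_i(\mathrm{TCR}(\mathbb{Z}/4)^{\phi \mathbb{Z}/2}) \to \pi_i^{C_2}(\mathrm{THR}(\mathbb{Z}/4)^{\phi \mathbb{Z}/2}) \xrightarrow{f_\ast - r_\ast} \pi_i(\mathrm{THR}(\mathbb{Z}/4)^{\phi \mathbb{Z}/2}) \to \dotsb \text{.}\end{equation*}
The plan is to split this into short exact sequences
\begin{equation*}0 \to \mathrm{coker}(f_\ast - r_\ast)_{i+1} \to \pi_i(\mathrm{TCR}(\mathbb{Z}/4)^{\phi \mathbb{Z}/2}) \to \mathrm{ker}(f_\ast - r_\ast)_i \to 0 \text{,}\end{equation*}
and read off the kernels and cokernels of $f_\ast - r_\ast$ in each degree through $6$.

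I would compute $(f-r)_\ast$ using the decomposition of Corollary~\ref{cor:decomp} and the descriptions of the maps in Corollary~\ref{cor:r_f_map_def}. On the off-diagonal $(n,m)$-summands of $(\mathrm{THR}(\mathbb{Z}/4)^{\phi \mathbb{Z}/2})^{C_2}$ with $n < m$, the map $r_\ast$ vanishes, while $f_\ast$ is the diagonalisation of $\pi_\ast(\Sigma^{n+m} H\mathbb{Z}/2 \otimes_{H\mathbb{Z}/4} H\mathbb{Z}/2)$ into two copies of itself; this is injective with cokernel a single copy. On the diagonal $n$-indexed summands, Proposition~\ref{prop:f_on_diag_comp} identifies $f_\ast$ with the map $L_i^{(n)}F_{\mathbb{Z}/4}(\mathbb{Z}/2) \to L_i^{(n)}G_{\mathbb{Z}/4}(\mathbb{Z}/2)$ induced by the inclusion of fixed points $F_{\mathbb{Z}/4} \hookrightarrow G_{\mathbb{Z}/4}$, and Proposition~\ref{prop:r_on_diag_comp} identifies $r_\ast$ as sending a homology class represented by $x$ in the Moore complex of $F_{\mathbb{Z}/4}(\sigma M_\bullet)$ to the generator of a single specific $\mathbb{Z}/2$ summand of $\pi_i(\mathrm{THR}(\mathbb{Z}/4)^{\phi \mathbb{Z}/2})$ when the coefficient of $e_{(\mathrm{id}_{[i]}, \mathrm{id}_{[i]})}$ in $x$ is odd, and to zero otherwise.

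Both recipes are easy to implement as extensions of the computer program used to prove Theorem~\ref{thm:thr_groups}: explicit Moore-complex cycles generating $L_i^{(n)}F_{\mathbb{Z}/4}(\mathbb{Z}/2)$ are already produced, so it only remains to apply the natural inclusion $F_{\mathbb{Z}/4} \hookrightarrow G_{\mathbb{Z}/4}$ and to extract the coefficient of $e_{(\mathrm{id}_{[i]}, \mathrm{id}_{[i]})}$. After keeping careful track of the indexing of summands using Proposition~\ref{prop:pi_thr_as_sum} (noting that $\mathbb{Z}/2$ has no non-trivial automorphisms, so the target summands are canonically identified), one obtains $(f-r)_\ast$ as an explicit map of finite abelian groups in each degree $0 \le i \le 6$, from which kernel and cokernel computations are routine linear algebra. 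For $i = -1$ both $\pi_{-1}^{C_2}(\mathrm{THR}(\mathbb{Z}/4)^{\phi \mathbb{Z}/2})$ and $\pi_{-1}(\mathrm{THR}(\mathbb{Z}/4)^{\phi \mathbb{Z}/2})$ vanish by connectivity (using Proposition~\ref{prop:pi_thr_as_sum} and Theorem~\ref{thm:thr_groups}), so the corresponding short exact sequence reduces to $\pi_{-1}(\mathrm{TCR}(\mathbb{Z}/4)^{\phi \mathbb{Z}/2}) \cong \mathrm{coker}(f_\ast - r_\ast)_0$.

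The main obstacle is bookkeeping rather than conceptual: the source and target split into many summands indexed by triples $(n, m, k)$, and one must match up generators on both sides consistently so that the computer output corresponds to actual homomorphisms of the decomposed groups. Because $r_\ast$ hits only one specific summand per diagonal $n$, and $f_\ast$ cleanly splits diagonal-versus-off-diagonal contributions, once this matching is done the kernels and cokernels can be read off directly and the claimed sequences follow.
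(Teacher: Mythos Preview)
Your proposal is correct and follows essentially the same approach as the paper: set up the long exact sequence of the homotopy equaliser, split it into short exact sequences involving $\ker$ and $\mathrm{coker}$ of $(r-f)_i$, then extend the computer program via Propositions~\ref{prop:f_on_diag_comp} and~\ref{prop:r_on_diag_comp} to compute these explicitly. The paper additionally records the resulting table of kernels and cokernels and works out the $i=1$ case in detail by hand, but the method is identical.
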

\begin{proof}
The
long exact sequence of homotopy groups
for the fibre sequence
\[\mathrm{TCR}(\mathbb{Z}/4)^{\phi \mathbb{Z}/2} \to
  (\mathrm{THR}(\mathbb{Z}/4)^{\phi \mathbb{Z}/2})^{C_2} \xrightarrow{r-f}
  \mathrm{THR}(\mathbb{Z}/4)^{\phi \mathbb{Z}/2}\]
splits into short exact sequences
\[0 \to \text{coker}((r-f)_{i+1}) \to \pi_i(\mathrm{TCR}(\mathbb{Z}/4)^{\phi
    \mathbb{Z}/2}) \to \text{ker}((r-f)_i) \to 0 \text{,}\]
so we just need to compute the kernels and cokernels of the maps $(r-f)_i$.

Using Propositions~\ref{prop:f_on_diag_comp} and \ref{prop:r_on_diag_comp} we extend our computer
program to compute the action of the maps $r$ and $f$ on homotopy groups.
See the code
repository for the full output; the computed kernels and cokernels are reproduced in the
following table:
\[\begin{tabular}{>{$}c<{$}>{$}c<{$}>{$}c<{$}>{$}c<{$}>{$}c<{$}}
  i & \pi_i((\mathrm{THR}(\mathbb{Z}/4)^{\phi \mathbb{Z}/2})^{C_2}) & \pi_i(\mathrm{THR}(\mathbb{Z}/4)^{\phi \mathbb{Z}/2}) & \mathrm{ker}((r-f)_i) & \mathrm{coker}((r-f)_i)\\ \hline
  0 & \mathbb{Z}/4 & \mathbb{Z}/2 & \mathbb{Z}/4 & \mathbb{Z}/2\\
  1 & (\mathbb{Z}/2)^4 & (\mathbb{Z}/2)^3 & \mathbb{Z}/2 & 0\\
  2 & (\mathbb{Z}/2)^7 & (\mathbb{Z}/2)^6 & (\mathbb{Z}/2)^2 & \mathbb{Z}/2\\
  3 & (\mathbb{Z}/2)^{13} & (\mathbb{Z}/2)^{10} & (\mathbb{Z}/2)^4 & \mathbb{Z}/2\\
  4 & (\mathbb{Z}/2)^{15} \oplus (\mathbb{Z}/4)^3 & (\mathbb{Z}/2)^{15} & (\mathbb{Z}/2)^6 \oplus \mathbb{Z}/4 & (\mathbb{Z}/2)^2\\
  5 & (\mathbb{Z}/2)^{27} & (\mathbb{Z}/2)^{21} & (\mathbb{Z}/2)^{10} & (\mathbb{Z}/2)^4\\
  6 & (\mathbb{Z}/2)^{34} & (\mathbb{Z}/2)^{28} & (\mathbb{Z}/2)^{15} & (\mathbb{Z}/2)^9 \text{.}
\end{tabular}\]

Let us describe the program output in detail in the case $i = 1$, since we will use this
in the next section. In Theorem~\ref{thm:thr_groups} we already computed
$\pi_1((\mathrm{THR}(\mathbb{Z}/4)^{\phi \mathbb{Z}/2})^{C_2}) \cong
(\mathbb{Z}/2)^4$ and $\pi_1(\mathrm{THR}(\mathbb{Z}/4)^{\phi \mathbb{Z}/2})
\cong (\mathbb{Z}/2)^3$. Let us denote the basis elements of
$\pi_1((\mathrm{THR}(\mathbb{Z}/4)^{\phi \mathbb{Z}/2})^{C_2})$ by $\{x_1, x_2,
x_3, x_4\}$, where $\{x_1, x_2\}$ generates the $0$-indexed diagonal component
isomorphic to $L_1^{(0)}F_{\mathbb{Z}/4}(\mathbb{Z}/2)$, $\{x_3\}$ generates the $1$-indexed
diagonal component $L_1^{(1)}F_{\mathbb{Z}/4}(\mathbb{Z}/2)$ and $\{x_4\}$ generates the off-diagonal component. Let the basis
elements of $\pi_1(\mathrm{THR}(\mathbb{Z}/4)^{\phi \mathbb{Z}/2})$ be denoted
by $\{y_1, y_2, y_3\}$, which corresponds to $(n, m, k) = (0, 0, 1), (0, 1, 0),
(1, 0, 0)$ respectively in the indexing
of Proposition~\ref{prop:pi_thr_as_sum}. Then the program computes that
$f$ takes $x_1 \mapsto 0$, $x_2 \mapsto y_1$, $x_3 \mapsto
0$ and $x_4 \mapsto y_2 + y_3$, and $r$ takes $x_1 \mapsto y_1$, $x_2 \mapsto 0$, $x_3
\mapsto y_3$ and $x_4 \mapsto 0$. From this we calculate that $(r-f)_1$ has kernel $\mathbb{Z}/2$ and trivial cokernel.
\end{proof}
\begin{remark}
  Note that all non-trivial homotopy groups of $\mathrm{TCR}(\mathbb{Z}/4)^{\phi
    \mathbb{Z}/2}$ are in degrees $\ge -1$. Indeed for $A$ a ring spectrum with involution such that $A$ and $A^{\phi
  \mathbb{Z}/2}$ are connective, then
  $(\mathrm{THR}(A)^{\phi \mathbb{Z}/2})^{C_2}$ and
  $\mathrm{THR}(A)^{\phi \mathbb{Z}/2}$ are also connective, so the long exact sequence
  shows that $\pi_i(\mathrm{TCR}(A)^{\phi
    \mathbb{Z}/2}) = 0$ for $i < -1$.
\end{remark}
\begin{corollary}
  The groups $\pi_i(\mathrm{TCR}(\mathbb{Z}/4)^{\phi \mathbb{Z}/2})$ are $4$-torsion
  for all $i$.
\end{corollary}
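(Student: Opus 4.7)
The plan is to exploit the ring spectrum structure on $\mathrm{TCR}(\mathbb{Z}/4)^{\phi \mathbb{Z}/2}$, together with the already established computation $\pi_0(\mathrm{TCR}(\mathbb{Z}/4)^{\phi \mathbb{Z}/2}) \cong \mathbb{Z}/4$ from Theorem~\ref{thm:tcr_ses_low_deg}. The standard lemma is that for any ring spectrum $X$ with unit $\eta : \mathbb{S} \to X$, if $n \cdot 1_X = 0$ in $\pi_0(X)$, then multiplication by $n$ is null-homotopic as a self-map of $X$, and hence is zero on all homotopy groups. I will apply this with $n=4$.

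First I would justify the ring structure. The spectrum $\mathrm{TRR}(\mathbb{Z}/4; 2)$ is a $\mathbb{Z}/2$-equivariant ring spectrum, inheriting multiplication from $\mathrm{THR}(\mathbb{Z}/4)$ via the fact that the relevant genuine fixed point functors are lax monoidal. The Frobenius $F$ and the identity are ring maps of $\mathrm{TRR}(\mathbb{Z}/4; 2)$, so their homotopy equaliser $\mathrm{TCR}(\mathbb{Z}/4; 2)$ is a $\mathbb{Z}/2$-ring spectrum. Geometric fixed points are strong monoidal on $\mathbb{Z}/2$-spectra, so the underlying spectrum $\mathrm{TCR}(\mathbb{Z}/4)^{\phi \mathbb{Z}/2}$ is a ring spectrum.

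Next, the lemma itself: multiplication by $n$ on $X$ can be written as the composition
\[ X \simeq X \otimes_{\mathbb{S}} \mathbb{S} \xrightarrow{\mathrm{id}_X \otimes (n\eta)} X \otimes_{\mathbb{S}} X \xrightarrow{\mu} X \text{,}\]
and $n\eta : \mathbb{S} \to X$ represents $n \cdot 1_X \in \pi_0(X)$. When this class vanishes, $n\eta$ is null-homotopic and hence so is the multiplication-by-$n$ endomorphism. Applying this with $X = \mathrm{TCR}(\mathbb{Z}/4)^{\phi \mathbb{Z}/2}$ and $n = 4$ immediately gives the conclusion.

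I do not expect a serious obstacle here, but it is worth remarking that a direct attack via the short exact sequences of Theorem~\ref{thm:tcr_ses_low_deg} would only yield $8$-torsion: the kernel term is a subgroup of $\pi_i((\mathrm{THR}(\mathbb{Z}/4)^{\phi \mathbb{Z}/2})^{C_2})$, which is $4$-torsion, while the cokernel term is a quotient of $\pi_{i+1}(\mathrm{THR}(\mathbb{Z}/4)^{\phi \mathbb{Z}/2})$, which is $2$-torsion by Proposition~\ref{prop:pi_thr_as_sum}. An extension of $4$-torsion by $2$-torsion is in general only $8$-torsion, which is why invoking the multiplicative structure is essential.
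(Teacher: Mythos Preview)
Your proof is correct and follows the same approach as the paper: both use that $\mathrm{TCR}(\mathbb{Z}/4)^{\phi \mathbb{Z}/2}$ is a ring spectrum with $\pi_0 \cong \mathbb{Z}/4$, so every $\pi_i$ is a $\mathbb{Z}/4$-module. Your write-up is more detailed, spelling out the ring structure and the multiplication-by-$n$ argument and adding the useful observation that the long exact sequence alone would only give $8$-torsion.
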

\begin{proof}
Since $\mathrm{TCR}(\mathbb{Z}/4)^{\phi \mathbb{Z}/2}$
is a ring spectrum, all the $\pi_i(\mathrm{TCR}(\mathbb{Z}/4)^{\phi
  \mathbb{Z}/2})$ are modules over the ring $\pi_0(\mathrm{TCR}(\mathbb{Z}/4)^{\phi
  \mathbb{Z}/2}) \cong \mathbb{Z}/4$ hence
must be $4$-torsion.
\end{proof}

\subsection{Resolving the extension problem in degree 1}
\label{sec:extension_i_1}

We would prefer to actually compute the homotopy groups, not just fit
them into exact sequences. The above shows $\pi_{-1}(\mathrm{TCR}(\mathbb{Z}/4)^{\phi
  \mathbb{Z}/2}) \cong \mathbb{Z}/2$ and $\pi_0(\mathrm{TCR}(\mathbb{Z}/4)^{\phi
  \mathbb{Z}/2}) \cong \mathbb{Z}/4$. The following analysis lets us additionally
compute $\pi_1$, and makes some progress in simplifying the problem for higher homotopy groups.

The key observation is that the map $r-f$ almost splits as a direct sum. Recall
how the maps $f$ and $r$ are defined in Corollary~\ref{cor:r_f_map_def} in terms of the decompositions of
$\mathrm{THR}(\mathbb{Z}/4)^{\phi \mathbb{Z}/2}$ and
$(\mathrm{THR}(\mathbb{Z}/4)^{\phi \mathbb{Z}/2})^{C_2}$ described in
Corollary~\ref{cor:decomp}. The map $f$ sends the diagonal summand indexed by
$n$ to the diagonal summand indexed by $n$, and sends the off-diagonal summand indexed
by $(n, m)$ into the off-diagonal summands indexed by $(n, m)$ and $(m, n)$. The
map $r$ is zero on the off-diagonal summands, and sends the diagonal summand indexed
by $n$ into the summands indexed $(n, m)$ for $m \ge 0$. So $r$ preserves
the $n$ index, and so does $f$ if we restrict to the diagonal summands. We
record this observation in the following lemma.

Let
\[D_n \coloneqq \left(\Sigma^{n \rho} H\mathbb{Z}/4 \otimes_{N_{\{e\}}^{C_2} (H\mathbb{Z}/4)}
    N_{\{e\}}^{C_2}(H\mathbb{Z}/2) \right)^{C_2}\]
denote the $n$-indexed diagonal summand of $(\mathrm{THR}(\mathbb{Z}/4)^{\phi
  \mathbb{Z}/2})^{C_2}$, and let
\[D \coloneqq \bigoplus_{n \ge 0} D_n \text{.}\]

\begin{lemma}The restriction $(r-f)|_D: D \to \mathrm{THR}(\mathbb{Z}/4)^{\phi \mathbb{Z}/2}$
is a direct sum of maps
\[\gamma_n : D_n \to \bigoplus_{m \ge 0} \Sigma^{n+m}H\mathbb{Z}/2 \otimes_{H \mathbb{Z}/4} H\mathbb{Z}/2 \mathrm{,}\]
where the right hand side
is the sum of the $(n, m)$-indexed summands of $\mathrm{THR}(\mathbb{Z}/4)^{\phi
\mathbb{Z}/2}$ for $m \ge 0$.
\end{lemma}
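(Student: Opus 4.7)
The plan is a direct bookkeeping check, unpacking the explicit descriptions of $f$ and $r$ from Corollary~\ref{cor:r_f_map_def} against the two different decompositions of $\mathrm{THR}(\mathbb{Z}/4)^{\phi \mathbb{Z}/2}$ provided by Corollary~\ref{cor:decomp} and Remark~\ref{rem:thr_underlying_decomp}. The main conceptual step is matching these two decompositions; after that the claim is immediate.

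First I would verify how the equivariant decomposition of Corollary~\ref{cor:decomp} manifests itself on underlying spectra to produce the decomposition of Remark~\ref{rem:thr_underlying_decomp}. The $n$-indexed equivariant diagonal summand $\Sigma^{n\rho}H\mathbb{Z}/4 \otimes_{N^{C_2}_{\{e\}}(H\mathbb{Z}/4)} N^{C_2}_{\{e\}}(H\mathbb{Z}/2)$ has underlying spectrum $\Sigma^{2n}H\mathbb{Z}/2 \otimes_{H\mathbb{Z}/4} H\mathbb{Z}/2$, which is the $(n,n)$-summand in Remark's indexing; each $(n,m)$-off-diagonal equivariant summand (with $n<m$) has underlying spectrum two copies of $\Sigma^{n+m} H\mathbb{Z}/2 \otimes_{H\mathbb{Z}/4} H\mathbb{Z}/2$ (via the $(C_2)_+\otimes_{\mathbb{S}}({-})$ factor), contributing the $(n,m)$- and $(m,n)$-summands in Remark's indexing. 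In particular the $n$-indexed equivariant diagonal summand corresponds, on underlying spectra, to a single $(n,n)$-summand in Remark's decomposition.

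Given this matching, Corollary~\ref{cor:r_f_map_def} directly yields both inclusions. The map $f|_{D_n}$ is the restriction map into the $n$-indexed diagonal summand of Corollary's decomposition of $\mathrm{THR}(\mathbb{Z}/4)^{\phi \mathbb{Z}/2}$, which by the previous paragraph embeds as the $(n,n)$-summand and is therefore contained in $\bigoplus_{m \ge 0} \Sigma^{n+m} H\mathbb{Z}/2 \otimes_{H\mathbb{Z}/4} H\mathbb{Z}/2$. Likewise, the composite defining $r|_{D_n}$ in Corollary~\ref{cor:r_f_map_def} ends with the inclusion of exactly $\bigoplus_{m \ge 0} \Sigma^{n+m} H\mathbb{Z}/2 \otimes_{H\mathbb{Z}/4} H\mathbb{Z}/2$ as a direct summand, so $r(D_n)$ lies in the same subspace.

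Consequently $(r-f)|_{D_n}$ factors through $\bigoplus_{m \ge 0} \Sigma^{n+m} H\mathbb{Z}/2 \otimes_{H\mathbb{Z}/4} H\mathbb{Z}/2$, and we define $\gamma_n$ to be this factored map. For distinct values of $n$ the corresponding target subspaces are pairwise disjoint direct summands of $\mathrm{THR}(\mathbb{Z}/4)^{\phi \mathbb{Z}/2}$ (they are distinguished by the first coordinate in Remark's $(n,m)$-indexing), so the $\gamma_n$ assemble as a direct sum, giving $(r-f)|_D = \bigoplus_{n \ge 0} \gamma_n$ as claimed. The only real subtlety is the compatibility between Corollary's equivariant decomposition and Remark's underlying decomposition; once that is in hand, everything else is a direct read-off from Corollary~\ref{cor:r_f_map_def}.
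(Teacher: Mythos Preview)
Your proposal is correct and follows the same approach as the paper. The paper's proof is a one-line ``Immediate from Corollary~\ref{cor:r_f_map_def}'', with the key observation (that $f$ and $r$ each preserve the first index $n$ when restricted to the diagonal summands) stated informally in the preceding paragraph; you have simply spelled out this bookkeeping explicitly, including the matching between the equivariant and underlying decompositions.
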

\begin{proof}
  Immediate from Corollary~\ref{cor:r_f_map_def}.
\end{proof}

Homotopy fibres preserve direct sums, so we have
\[\mathrm{hofib}((r-f)|_D) \cong \bigoplus_{n \ge 0} \mathrm{hofib}(\gamma_n) \mathrm{.}\]
Our computer program computes the action of $r-f$ on
homotopy in terms of the
decomposition of Corollary~\ref{cor:decomp}, so for each $n$ we can easily restrict
to obtain the action of the map $\gamma_n$, and from this calculate a long
exact sequence of homotopy groups for $\mathrm{hofib}(\gamma_n)$.

Let us record the resulting short exact sequences involving $\pi_1(\mathrm{hofib}(\gamma_n))$.

\begin{lemma} \label{lem:pi_1_x_n}
  We have
\begin{align*}
  0 \to (\mathbb{Z}/2)^2 \to &\pi_1(\mathrm{hofib}(\gamma_0)) \to \mathbb{Z}/2 \to 0\\
  0 \to \mathbb{Z}/2 \to &\pi_1(\mathrm{hofib}(\gamma_1)) \to 0
\end{align*}
and $\pi_1(\mathrm{hofib}(\gamma_n)) = 0$ for $n > 1$.
\end{lemma}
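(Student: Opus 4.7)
The plan is to apply the long exact sequence in homotopy to each homotopy fibre sequence
\[\mathrm{hofib}(\gamma_n) \to D_n \xrightarrow{\gamma_n} \bigoplus_{m \ge 0} \Sigma^{n+m} H\mathbb{Z}/2 \otimes_{H\mathbb{Z}/4} H\mathbb{Z}/2\]
and, exactly as in the proof of Theorem~\ref{thm:tcr_ses_low_deg}, split off a short exact sequence
\[0 \to \mathrm{coker}((\gamma_n)_2) \to \pi_1(\mathrm{hofib}(\gamma_n)) \to \ker((\gamma_n)_1) \to 0,\]
reducing the lemma to identifying the kernel of $(\gamma_n)_1$ and the cokernel of $(\gamma_n)_2$ for each $n$. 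The key observation is that the targets of the various $\gamma_n$ are disjoint summands of $\mathrm{THR}(\mathbb{Z}/4)^{\phi\mathbb{Z}/2}$ (they are precisely the $(n', m', k)$-indexed summands of Proposition~\ref{prop:pi_thr_as_sum} with first coordinate $n' = n$), so $\gamma_n$ on homotopy is obtained from $(r-f)|_D$ by restricting source to the $D_n$-component and projecting target onto the first-index-$n$ component. All of these data are already produced by the computer program used in Theorem~\ref{thm:tcr_ses_low_deg}.

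For $n = 0$ and $n = 1$ I would read the kernels off the explicit $\pi_1$ computation recorded in that proof: restricted to $D_0$ the map sends $x_1, x_2 \mapsto y_1$, giving $\ker((\gamma_0)_1) = \langle x_1 + x_2\rangle \cong \mathbb{Z}/2$, while restricted to $D_1$ it sends $x_3 \mapsto y_3$, an isomorphism, so $\ker((\gamma_1)_1) = 0$. The cokernels $\mathrm{coker}((\gamma_n)_2)$ for $n = 0, 1$ are read off in the same way from the extended program, this time projecting the image of $(r-f)_2$ onto the first-index-$n$ component of $\pi_2(\mathrm{THR}(\mathbb{Z}/4)^{\phi\mathbb{Z}/2})$; they come out to $(\mathbb{Z}/2)^2$ and $\mathbb{Z}/2$ respectively.

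For $n \ge 2$ both sides of the short exact sequence vanish. That $\ker((\gamma_n)_1) = 0$ follows from $\pi_1(D_n) = L_1^{(n)}F_{\mathbb{Z}/4}(\mathbb{Z}/2) = 0$, which the computer confirms since the full group $(\mathbb{Z}/2)^4$ of Theorem~\ref{thm:thr_groups} is already accounted for by the $n = 0, 1$ diagonal contributions plus the off-diagonal $\mathbb{Z}/2$. For $n \ge 3$ the cokernel vanishes trivially because the target of $\gamma_n$ in $\pi_2$ is zero (no triple $(n, m, k)$ satisfies $n \ge 3$ and $n + m + k = 2$). The only remaining case is $n = 2$, which I would handle without the computer: Proposition~\ref{prop:f_on_diag_comp} identifies $(f|_{D_2})_2$ with the map $L_2^{(2)}F_{\mathbb{Z}/4}(\mathbb{Z}/2) \to L_2^{(2)}G_{\mathbb{Z}/4}(\mathbb{Z}/2)$, and the target is zero by Lemma~\ref{lem:l_g_comp} since $2 < 2n = 4$; on the other hand Proposition~\ref{prop:r_on_diag_comp} together with Lemma~\ref{lem:quotient_isomorphism} shows that $(r|_{D_2})_2 : L_2^{(2)}F_{\mathbb{Z}/4}(\mathbb{Z}/2) \cong \mathbb{Z}/2 \to \mathbb{Z}/2$ sends the generator to the $(2, 0, 0)$-indexed generator of the target, and is therefore an isomorphism. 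Hence $(\gamma_2)_2 = (r-f)|_{D_2}$ is an isomorphism on $\pi_2$ and its cokernel vanishes.

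The main obstacle is bookkeeping rather than mathematics: one must unambiguously match the computer's basis vectors for $\pi_i((\mathrm{THR}(\mathbb{Z}/4)^{\phi\mathbb{Z}/2})^{C_2})$ to the summands $D_n$, and match the summands of $\pi_i(\mathrm{THR}(\mathbb{Z}/4)^{\phi\mathbb{Z}/2})$ to the first-coordinate index $n'$, so that the direct-sum decomposition $(r-f)|_D = \bigoplus_n \gamma_n$ is visibly implemented at the level of the computer output. Once that correspondence is fixed, each kernel and cokernel is read off mechanically.
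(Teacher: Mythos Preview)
Your proposal is correct and follows essentially the same route as the paper: extract short exact sequences from the long exact sequence for each $\mathrm{hofib}(\gamma_n)$, then read off the relevant kernels and cokernels from the computer output already used in Theorem~\ref{thm:tcr_ses_low_deg}. Your treatment of the boundary case $n=2$ is in fact more explicit than the paper's (which simply asserts the short exact sequences vanish for $n>1$): you verify directly via Propositions~\ref{prop:f_on_diag_comp} and \ref{prop:r_on_diag_comp} that $(\gamma_2)_2$ is an isomorphism, which is a nice computer-free check. One minor point of phrasing: when you write ``projecting the image of $(r-f)_2$ onto the first-index-$n$ component'' you should say the image of $(r-f)|_{D_n}$, since the off-diagonal summands of the source also hit first-index-$n$ targets via $f$; but your later bookkeeping paragraph shows you are aware of this, and the claimed values are correct.
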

\begin{proof}
  From the long exact sequence of homotopy groups we get short exact sequences
  \[0 \to \text{coker}(\pi_{i+1}(\gamma_n)) \to \pi_i(\text{hofib}(\gamma_n))
    \to \text{ker}(\pi_i(\gamma_n)) \to 0 \text{.}\]

  Immediate from the output of the computer program, we can read off the relevant
  restrictions of $r-f$ and compute kernels and cokernels. For example, recall our
  description of the action of $f$ and $r$ on $\pi_1$ in the proof of
  Theorem~\ref{thm:tcr_ses_low_deg}. We have $\pi_1(D_0) \cong
  L_1^{(0)}F_{\mathbb{Z}/4}(\mathbb{Z}/2) \cong (\mathbb{Z}/2)^2$ corresponding
  to the subgroup generated by $\{x_1, x_2\}$ in our previous computation,
  and $\pi_1$ of the sum of the $(0, m)$-indexed
  components of $\mathrm{THR}(\mathbb{Z}/4)^{\phi \mathbb{Z}/2}$ is
  $(\mathbb{Z}/2)^2$ corresponding to the subgroup generated by $\{y_1, y_2\}$
  previously. The map $f$ takes $x_1 \mapsto 0$ and $x_2 \mapsto y_1$ respectively, and the map
  $r$ takes $x_1 \mapsto y_1$ and $x_2 \mapsto 0$. So $\gamma_0$ is given by $x_1
  \mapsto y_1$ and $x_2 \mapsto -y_1$, and we conclude that $\pi_1(\gamma_0)$
  has kernel $\mathbb{Z}/2$.

  The full set of kernels and cokernels needed is as follows:
  \begin{alignat*}{2}\mathrm{coker}(\pi_2(\gamma_0)) &\cong (\mathbb{Z}/2)^{2}\quad\quad &&\mathrm{ker}(\pi_1(\gamma_0)) \cong \mathbb{Z}/2\\
  \mathrm{coker}(\pi_2(\gamma_1)) &\cong \mathbb{Z}/2 \quad\quad &&\mathrm{ker}(\pi_1(\gamma_1)) \cong 0 \text{.}
  \end{alignat*}
  For $n > i$ the short exact
  sequences are evidently all zero.
\end{proof}

Now we need to relate the homotopy groups of the restricted homotopy fibre
$\mathrm{hofib}((r-f)|_D)$ to those of the
original homotopy fibre $\mathrm{hofib}(r-f) \simeq \mathrm{TCR}(\mathbb{Z}/4)^{\phi
\mathbb{Z}/2}$. Ultimately this will allow us to compute
$\pi_1(\mathrm{TCR}(\mathbb{Z}/4)^{\phi \mathbb{Z}/2})$.

We first note a small homological algebra lemma that we will need in the
subsequent proposition.

\begin{lemma} \label{lem:homological}
  Suppose we have a map of short exact sequences in an abelian category
  \[\begin{tikzcd}
    0 \ar[r] & A \ar[r] \ar[d] & B \ar[r]
    \ar[d] & X \ar[r] \ar[d, equals] & 0\\
    0 \ar[r] & C \ar[r] & D \ar[r] & X \ar[r] & 0\\
  \end{tikzcd}\]
where the rightmost vertical map is equality. Then the left hand square is a
pullback. If the map $A \to C$ is an epimorphism then the left hand square is also a pushout.
\end{lemma}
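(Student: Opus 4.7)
The plan is to apply the standard characterization of pullback and pushout squares in an abelian category. A commutative square
\[\begin{tikzcd}
A \ar[r, "\iota"] \ar[d, "\phi"'] & B \ar[d, "f"]\\
C \ar[r, "g"'] & D
\end{tikzcd}\]
is a pullback if and only if $0 \to A \xrightarrow{(\iota, -\phi)} B \oplus C \xrightarrow{(f, g)} D$ is exact, and is both a pullback and a pushout if and only if this sequence moreover extends to a short exact sequence $0 \to A \to B \oplus C \to D \to 0$. So the lemma reduces to checking exactness at $A$ and at $B \oplus C$ in general, plus surjectivity of $(f, g) : B \oplus C \to D$ under the epimorphism hypothesis.

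For the pullback claim, injectivity of $A \to B \oplus C$ is immediate from injectivity of $\iota$, which is given by exactness of the top row at $A$. Exactness at $B \oplus C$ is a short diagram chase: given $(b, c)$ in the kernel of $(f, g)$, chase images through $X$ using commutativity of the right square to deduce that $b \in \mathrm{ker}(B \to X) = \iota(A)$; write $b = \iota(a)$, and then commutativity of the left square together with injectivity of $g$ (from exactness of the bottom row at $C$) forces $c = \phi(a)$.

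For the pushout claim, assuming $\phi : A \to C$ is epi, apply the snake lemma to the given map of short exact sequences. Since the rightmost vertical map is the identity, its kernel and cokernel both vanish, so the snake lemma yields an isomorphism $\mathrm{coker}(\phi) \cong \mathrm{coker}(f)$. Hence $\phi$ being an epimorphism forces $f : B \to D$ to be an epimorphism, so $(f, g) : B \oplus C \to D$ is epi, upgrading the exact sequence above to a short exact sequence and therefore making the square a pushout. The proof is routine homological algebra; the only minor care needed is with the sign conventions in the pullback/pushout characterization, and the substantive content is just a brief diagram chase followed by one application of the snake lemma.
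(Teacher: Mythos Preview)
Your proof is correct. The approach differs from the paper's: you work directly with the exact-sequence characterization of pullback/pushout squares (a square is bicartesian iff $0 \to A \to B \oplus C \to D \to 0$ is short exact), verify exactness at $A$ and $B \oplus C$ by a diagram chase, and then invoke the snake lemma to get $\mathrm{coker}(\phi) \cong \mathrm{coker}(f)$ for the surjectivity step. The paper instead argues categorically: it stacks the left square on top of the square $C \to D \to X$ (with $\ast \to X$ below), shows the bottom and outer rectangles are bicartesian, and deduces the pullback claim from the pasting law; for the pushout it uses the four-lemma to see $B \to D$ is epi, together with the observation that a square with monic top and epic right side is a pushout iff it is a pullback. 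Your route is more hands-on homological algebra, the paper's is element-free and leans on pasting; both are short and neither needs anything beyond standard facts. One cosmetic point: with your sign convention $(\iota,-\phi)$ and $(f,g)$ the chase yields $c = -\phi(a)$ rather than $c = \phi(a)$, but you already flagged the sign issue and it does not affect the argument. If you want the proof to read literally in a general abelian category, either note that the element chase is justified by the embedding theorem or rephrase it via kernels as the paper does.
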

\begin{proof}
  First note that a square
  \[\begin{tikzcd}
L \ar[r, tail] \ar[d] & M \ar[d, two heads] \\
    N \ar[r] & P
  \end{tikzcd}\]
  with the top horizontal map a monomorphism and the right
  vertical map an epimorphism is a pushout iff it is a pullback. Indeed if it is
  a pushout then we have an exact sequence
  \[L \to M \oplus N \to P \to 0 \text{,}\]
  but since $L \to M$ is monic so is the map $L \to M \oplus N$ and so
  \[0 \to L \to M \oplus N \to P\]
  is exact and the square is a pullback. Dually if the square is a pullback then
  using $M \to P$ epic we see that the square is also a pushout.

  Now consider the diagram
  \[\begin{tikzcd}
    A \ar[r] \ar[d] & B \ar[d]\\
    C \ar[r] \ar[d] & D \ar[d]\\
    \ast \ar[r] & X \text{.}
  \end{tikzcd}\]
Since $X$ is the cokernel of $C \to D$, the bottom square is a
pushout. As $C \to D$ is monic and $D \to X$ is epic, the bottom
square is also a pullback. Similarly as $X$ is the cokernel of $A
\to B$ the outer square is a pushout and a pullback. So by the
pasting law for pullbacks the upper square is a pullback.

If $A \to C$ is epic then so is $B \to D$ by the $4$-lemma applied to the
original map of short exact sequences, so we
deduce the upper square is also a pushout.
\end{proof}

\begin{proposition} \label{prop:tcr_pushout_square}
  Let $(r-f)_i$ denote the map induced by $r-f$ on homotopy groups
  $\pi_{i}((\mathrm{THR}(\mathbb{Z}/4)^{\phi \mathbb{Z}/2})^{C_2}) \to
  \pi_{i}(\mathrm{THR}(\mathbb{Z}/4)^{\phi \mathbb{Z}/2})$. Then we have a pushout
  and pullback square
\[\begin{tikzcd}[column sep=small]
    \mathrm{coker}((r-f)_{i+1} |_{\pi_{i+1}(D)}) \ar[r] \ar[d] & \pi_i(\mathrm{hofib}((r-f)|_D))
    \ar[d] \\
    \mathrm{coker}((r-f)_{i+1}) \ar[r]  &
    \pi_i(\mathrm{TCR}(\mathbb{Z}/4)^{\phi \mathbb{Z}/2})
  \end{tikzcd}\]
where the horizontal arrows come from the long exact sequences for the homotopy
fibres of $(r-f)|_D$ and $r-f$, and the vertical maps are induced by the
inclusion of $D$ into $(\mathrm{THR}(\mathbb{Z}/4)^{\phi \mathbb{Z}/2})^{C_2}$.
\end{proposition}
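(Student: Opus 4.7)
The plan is to derive the square from Lemma~\ref{lem:homological} applied to a naturally arising map of short exact sequences. Let $O$ denote the off-diagonal summand, so $(\mathrm{THR}(\mathbb{Z}/4)^{\phi \mathbb{Z}/2})^{C_2} = D \oplus O$ by Corollary~\ref{cor:decomp}. The inclusion $D \hookrightarrow D \oplus O$ extends to a map of fibre sequences
\[\begin{tikzcd}
\mathrm{hofib}((r-f)|_D) \ar[r] \ar[d] & D \ar[r] \ar[d, hook] & \mathrm{THR}(\mathbb{Z}/4)^{\phi \mathbb{Z}/2} \ar[d, equals]\\
\mathrm{TCR}(\mathbb{Z}/4)^{\phi \mathbb{Z}/2} \ar[r] & D \oplus O \ar[r] & \mathrm{THR}(\mathbb{Z}/4)^{\phi \mathbb{Z}/2}
\end{tikzcd}\]
whose induced map of long exact sequences in homotopy, after taking the usual cokernel/kernel sub- and quotients, gives a map of short exact sequences. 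The leftmost square of this map is precisely the square of the proposition: the leftmost vertical $\beta$ is a further quotient of $\pi_{i+1}(\mathrm{THR}(\mathbb{Z}/4)^{\phi \mathbb{Z}/2})$ and hence an epimorphism, while the rightmost vertical is the map $\iota : \ker(\gamma_i) \to \ker((r-f)_i)$, $d \mapsto (d, 0)$, where I write $\gamma := (r-f)|_D$.

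The key step is showing $\iota$ is an isomorphism. Injectivity is automatic. For surjectivity, take $(d, o) \in \ker((r-f)_i)$; by Corollary~\ref{cor:r_f_map_def}, $r|_O = 0$ and $f|_O = \Delta$ is the diagonal inclusion into a direct sum, hence injective on homotopy. The equation $(r-f)_i(d, o) = 0$ thus becomes $\gamma_i(d) = \Delta_i(o)$, so surjectivity of $\iota$ reduces to proving
\[\mathrm{im}(\gamma_i) \cap \mathrm{im}(\Delta_i) = 0\]
inside $\pi_i(\mathrm{THR}(\mathbb{Z}/4)^{\phi \mathbb{Z}/2})$: given this, both sides vanish, so $o = 0$ by injectivity of $\Delta$ and $d \in \ker(\gamma_i)$. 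This intersection calculation is the principal obstacle.

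To settle it, I use the decomposition of $\pi_i(\mathrm{THR}(\mathbb{Z}/4)^{\phi \mathbb{Z}/2})$ by ordered triples $(n, m, k)$ with $n+m+k = i$ from Proposition~\ref{prop:pi_thr_as_sum}. By Proposition~\ref{prop:f_on_diag_comp}, $f(\pi_i(D_n))$ sits in the $(n, n, i{-}2n)$-summand; by Proposition~\ref{prop:r_on_diag_comp}, $r(\pi_i(D_n))$ sits in the $(n, 0, i{-}n)$-summand; and by Corollary~\ref{cor:r_f_map_def}, $\Delta(\pi_i(O_{p,q}))$ (for $p < q$) sits in the pair of summands $(p, q, i{-}p{-}q)$ and $(q, p, i{-}p{-}q)$ with equal coefficients. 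An element of the intersection must therefore vanish in every on-diagonal summand (since $\Delta$ is supported off-diagonal), and for each $n > 0$ its $(n, 0, \cdot)$-component, being a $\Delta$-contribution, must be matched by an equal $(0, n, \cdot)$-component; but $\gamma$ supplies no contribution to $(0, n, \cdot)$-summands, so both components must vanish. Hence the intersection is trivial, $\iota$ is an isomorphism, and Lemma~\ref{lem:homological} concludes that the square is a pullback and, using the surjectivity of $\beta$, also a pushout.
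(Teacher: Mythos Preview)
Your proof is correct and follows essentially the same approach as the paper: set up the map of fibre sequences, extract the map of short exact sequences, verify the left vertical is epi and the right vertical is an isomorphism, and invoke Lemma~\ref{lem:homological}. The only difference is in the argument for the kernel isomorphism: the paper projects onto an $(n,m)$-summand with $n<m$ in the target and observes this recovers the off-diagonal projection of the source (so the kernel must be diagonal), whereas you phrase it as $\mathrm{im}(\gamma_i)\cap\mathrm{im}(\Delta_i)=0$ and use the symmetry of $\Delta$; both are equivalent uses of the same support information.
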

\begin{proof}

We have a map of fibre sequences:
\[\begin{tikzcd}
    \mathrm{hofib}((r-f)|_D) \ar[r] \ar[d] & D \ar[r, "(r-f)|_D"] \ar[d] &
    \mathrm{THR}(\mathbb{Z}/4)^{\phi \mathbb{Z}/2} \ar[d, equal]\\
    \mathrm{TCR}(\mathbb{Z}/4)^{\phi \mathbb{Z}/2} \ar[r] &
    (\mathrm{THR}(\mathbb{Z}/4)^{\phi \mathbb{Z}/2})^{C_2} \ar[r, "r-f"] &
    \mathrm{THR}(\mathbb{Z}/4)^{\phi \mathbb{Z}/2} \mathrm{,}
  \end{tikzcd}\]
giving a corresponding map of long exact sequences in homotopy, and so maps of
the short exact sequences involving the homotopy groups of the homotopy fibres.
 Note that since
  $D$ is a summand of $(\mathrm{THR}(\mathbb{Z}/4)^{\phi \mathbb{Z}/2})^{C_2}$,
  $\pi_i(D)$ is a summand of $\pi_i((\mathrm{THR}(\mathbb{Z}/4)^{\phi
    \mathbb{Z}/2})^{C_2})$, and so the map induced by $(r-f)|_D$ on degree $i$ homotopy
  groups is $(r-f)_i |_{\pi_i(D)}$.
Then we have a map of short exact sequences
\[\begin{tikzcd}[column sep=small]
    0 \ar[r] & \mathrm{coker}((r-f)_{i+1} |_{\pi_{i+1}(D)}) \ar[r] \ar[d] & \pi_i(\mathrm{hofib}((r-f)|_D))
    \ar[r] \ar[d] & \mathrm{ker}((r-f)_i|_{\pi_i(D)}) \ar[r] \ar[d] & 0\\
    0 \ar[r] & \mathrm{coker}((r-f)_{i+1}) \ar[r] &
    \pi_i(\mathrm{TCR}(\mathbb{Z}/4)^{\phi \mathbb{Z}/2})
    \ar[r] & \mathrm{ker}((r-f)_i) \ar[r] & 0
  \end{tikzcd}\]
The map
$\mathrm{coker}((r-f)_{i+1} |_{\pi_{i+1}(D)}) \to \mathrm{coker}((r-f)_{i+1})$ is evidently
surjective. We claim that the map $\mathrm{ker}((r-f)_i|_{\pi_i(D)}) \to
\mathrm{ker}((r-f)_{i})$ is an isomorphism; that is, we claim that
$\mathrm{ker}((r-f)_{i})$ is contained within $\pi_i(D)$. Let $0 \le n < m$. If we post-compose
$(r-f)_i$ with the projection map  onto the off-diagonal component of $\mathrm{THR}(\mathbb{Z}/4)^{\phi
  \mathbb{Z}/2}$ indexed by $(n, m)$
\[\pi_i(\mathrm{THR}(\mathbb{Z}/4)^{\phi
  \mathbb{Z}/2}) \twoheadrightarrow \pi_i(\Sigma^{n+m} H\mathbb{Z}/2 \otimes_{H\mathbb{Z}/4}
H\mathbb{Z}/2)\]
then we get the projection map onto the off-diagonal component of $(\mathrm{THR}(\mathbb{Z}/4)^{\phi
  \mathbb{Z}/2})^{C_2}$ indexed by $(n, m)$
\[\pi_i((\mathrm{THR}(\mathbb{Z}/4)^{\phi
  \mathbb{Z}/2})^{C_2}) \twoheadrightarrow \pi_i(\Sigma^{n+m} H\mathbb{Z}/2 \otimes_{H\mathbb{Z}/4}
H\mathbb{Z}/2)\]
 (observe that this is what happens
if we post-compose $f_i$ with the projection, and on homotopy $r_i$ only hits non-diagonal
components indexed by $(n', 0)$ for some $n'$). So $\mathrm{ker}((r-f)_{i})$ is
entirely contained within the kernel of every projection onto an off-diagonal
component, i.e.\ is contained in the diagonal part of the decomposition. We
conclude the right hand vertical  map is an isomorphism.

Now we are done by Lemma~\ref{lem:homological}.
\end{proof}
\begin{remark}
Our earlier computer calculations suffice
to compute the left hand vertical map of the pushout square, so we've
reduced the problem of computing $\pi_i(\mathrm{TCR}(\mathbb{Z}/4)^{\phi
  \mathbb{Z}/2})$ to that of computing
$\pi_i(\mathrm{hofib}((r-f)|_D)) \cong \bigoplus^i_{n = 0}
\pi_i(\mathrm{hofib}(\gamma_n))$
and the pushout.
In fact as we will see in the following calculation, even if
while computing $\pi_i(\mathrm{hofib}(\gamma_n))$ we run into extension problems
that we are unable to solve, sometimes we can still compute the pushout.
\end{remark}

\begin{theorem}
  We have
  \[\pi_1(\mathrm{TCR}(\mathbb{Z}/4)^{\phi \mathbb{Z}/2}) \cong (\mathbb{Z}/2)^2 \mathrm{.}\]
\end{theorem}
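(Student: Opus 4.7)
The plan is to apply Proposition~\ref{prop:tcr_pushout_square} with $i=1$ to realise $\pi_1(\mathrm{TCR}(\mathbb{Z}/4)^{\phi \mathbb{Z}/2})$ as an explicit pushout, and then show that this pushout is forced to be $(\mathbb{Z}/2)^2$ rather than $\mathbb{Z}/4$ (the only two options allowed by the short exact sequence in Theorem~\ref{thm:tcr_ses_low_deg}).

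First, I would identify the four corners of the pushout square. The bottom-left $\mathrm{coker}((r-f)_2)$ is $\mathbb{Z}/2$ by Theorem~\ref{thm:tcr_ses_low_deg}. The top-right decomposes as $\pi_1(\mathrm{hofib}(\gamma_0)) \oplus \pi_1(\mathrm{hofib}(\gamma_1))$, with the second factor $\mathbb{Z}/2$ by Lemma~\ref{lem:pi_1_x_n}; the first factor $P_0$ has order $8$ and sits in a possibly non-split extension $0 \to (\mathbb{Z}/2)^2 \to P_0 \to \mathbb{Z}/2 \to 0$ whose resolution we intend to avoid. The top-left $\mathrm{coker}((r-f)_2|_{\pi_2(D)})$ splits as $\bigoplus_n \mathrm{coker}(\pi_2(\gamma_n))$; Lemma~\ref{lem:pi_1_x_n} handles $n=0,1$, while for $n\ge 2$ the vanishing $\pi_1(\mathrm{hofib}(\gamma_n))=0$ forces $\pi_2(\gamma_n)$ to be surjective via the long exact sequence, so the sum is $(\mathbb{Z}/2)^2 \oplus \mathbb{Z}/2 \cong (\mathbb{Z}/2)^3$.

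Next I would describe the maps. The top horizontal is injective, being the $\mathrm{coker}$ part of the long exact sequence. The left vertical is surjective with kernel $K \cong (\mathbb{Z}/2)^2$, and from the proof of Proposition~\ref{prop:tcr_pushout_square} we have $K \cong \mathrm{im}((r-f)_2)/\mathrm{im}((r-f)_2|_{\pi_2(D)})$. Since $r$ vanishes on the off-diagonal summands of $(\mathrm{THR}(\mathbb{Z}/4)^{\phi\mathbb{Z}/2})^{C_2}$ and $f$ acts there by the diagonal map (Corollary~\ref{cor:r_f_map_def}), the two generators of $K$ arise concretely from the off-diagonal $(0,1)$- and $(0,2)$-summands contributing to $\pi_2((\mathrm{THR}(\mathbb{Z}/4)^{\phi\mathbb{Z}/2})^{C_2})$. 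Extending the computer program to track where these elements land, I would identify $K$'s two generators inside $\mathrm{coker}(\pi_2(\gamma_0)) \oplus \mathrm{coker}(\pi_2(\gamma_1))$ explicitly, and in particular verify that the projection $K \to \mathrm{coker}(\pi_2(\gamma_0))$ is surjective onto $(\mathbb{Z}/2)^2$.

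Finally, the pushout gives $\pi_1(\mathrm{TCR}(\mathbb{Z}/4)^{\phi\mathbb{Z}/2}) \cong (P_0 \oplus \mathbb{Z}/2)/\alpha(K)$, a group of order $4$, and I would argue it has exponent $2$. The hard part, and the main obstacle, is the possibility that $P_0 \cong \mathbb{Z}/2 \oplus \mathbb{Z}/4$: the key observation is that $\mathrm{coker}(\pi_2(\gamma_0)) = (\mathbb{Z}/2)^2$ sits inside $P_0$ as the full $2$-torsion in this case, so $\alpha(K)$'s projection to $P_0$ surjects onto the $2$-torsion of $P_0$ and in particular contains the double of any would-be $\mathbb{Z}/4$-generator. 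Using the computer-identified generators of $K$ together with this structural observation, I would conclude that no element of order $4$ survives in the quotient in either the split or non-split case for $P_0$, giving $\pi_1(\mathrm{TCR}(\mathbb{Z}/4)^{\phi\mathbb{Z}/2}) \cong (\mathbb{Z}/2)^2$.
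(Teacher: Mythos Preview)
Your overall strategy is the same as the paper's: apply Proposition~\ref{prop:tcr_pushout_square} at $i=1$ and analyse the resulting pushout. The difference, and the source of a genuine gap, is in how much you extract from the computer data about the kernel $K$ of the left vertical map.

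The paper observes (by directly examining the action of $r-f$ on $\pi_2$) that the quotient
\[
(\mathbb{Z}/2)^2 \oplus \mathbb{Z}/2 \;=\; \mathrm{coker}((r-f)_2|_{\pi_2(D)}) \twoheadrightarrow \mathrm{coker}((r-f)_2) \;=\; \mathbb{Z}/2
\]
is precisely the projection onto the second (the $n=1$) summand. Hence $K$ is \emph{equal} to the $n=0$ summand $(\mathbb{Z}/2)^2 \oplus 0$, not merely a subgroup projecting onto it. The pushout therefore splits as
\[
\pi_1(\mathrm{TCR}(\mathbb{Z}/4)^{\phi\mathbb{Z}/2}) \;\cong\; \bigl(P_0/\mathrm{coker}(\pi_2(\gamma_0))\bigr)\oplus \mathbb{Z}/2 \;\cong\; \mathbb{Z}/2 \oplus \mathbb{Z}/2,
\]
the last isomorphism coming directly from the short exact sequence of Lemma~\ref{lem:pi_1_x_n} for $n=0$. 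No case analysis on $P_0$ is needed.

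Your final paragraph, by contrast, only uses that the projection $K \to \mathrm{coker}(\pi_2(\gamma_0))$ is surjective, and this is not enough. If $K$ were the graph of a nonzero homomorphism $\phi:(\mathbb{Z}/2)^2 \to \mathbb{Z}/2$ rather than $(\mathbb{Z}/2)^2 \oplus 0$, then in the case $P_0 \cong \mathbb{Z}/2 \oplus \mathbb{Z}/4$ the image $\alpha(K)\subset P_0\oplus \mathbb{Z}/2$ could contain an element with $P_0$-component $(0,2)$ but nonzero $\mathbb{Z}/2$-component, while failing to contain $(0,2,0)$ itself. In that situation the quotient $(P_0\oplus\mathbb{Z}/2)/\alpha(K)$ would be $\mathbb{Z}/4$. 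So ``$\alpha(K)$ projects onto the $2$-torsion of $P_0$'' does not by itself force exponent $2$ in the quotient. The fix is exactly the paper's sharper claim: verify from the computer output that $K$ sits entirely in the $n=0$ summand (equivalently, that your generator coming from the off-diagonal $(0,1)$-class has trivial image in $\mathrm{coker}(\pi_2(\gamma_1))$), at which point the case analysis becomes unnecessary.
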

\begin{proof}
We apply the above proposition to compute $\pi_1$. Taking the direct sum of the
$n=0$ and $n=1$ short exact sequences from Lemma~\ref{lem:pi_1_x_n}, we see that
\begin{align*}
  \mathrm{coker}((r-f)_{2}|_{\pi_{2}(D)}) &\cong (\mathbb{Z}/2)^2 \oplus \mathbb{Z}/2 \text{,}\\
  \pi_1(\mathrm{hofib}((r-f)|_D)) &\cong \pi_1(\mathrm{hofib}(\gamma_0)) \oplus \pi_1(\mathrm{hofib}(\gamma_1)) \cong
  \pi_1(\mathrm{hofib}(\gamma_0)) \oplus \mathbb{Z}/2 \text{,}\\
  \mathrm{ker}((r-f)_1|_{\pi_1(D)}) &\cong \mathbb{Z}/2 \oplus 0
  \text{.}\end{align*}
We saw earlier that $\mathrm{coker}((r-f)_2) \cong
\mathbb{Z}/2$, and explicitly examining the action of $r-f$ on $\pi_2$ shows
that the quotient
\[\mathrm{coker}((r-f)_{2}
  |_{\pi_{2}(D)}) \twoheadrightarrow \mathrm{coker}((r-f)_2)\]
is the projection of
$(\mathbb{Z}/2)^2 \oplus \mathbb{Z}/2$ onto the second summand. Now
Proposition~\ref{prop:tcr_pushout_square} gives us a pushout square
\[\begin{tikzcd}
  (\mathbb{Z}/2)^2 \oplus \mathbb{Z}/2 \ar[r, hookrightarrow] \ar[d, twoheadrightarrow] \arrow[dr, phantom,
  "\ulcorner", pos=0.95, xshift=4ex] &
  \pi_1(\text{hofib}(\gamma_0)) \ar[d, twoheadrightarrow]\\
\mathbb{Z}/2 \ar[r, hookrightarrow] & \pi_1(\mathrm{TCR}(\mathbb{Z}/4)^{\phi \mathbb{Z}/2}) \text{.}\end{tikzcd}\]
Considering this pushout we find that
$\pi_1(\mathrm{TCR}(\mathbb{Z}/4)^{\phi \mathbb{Z}/2})$ is the cokernel of the composition
\[(\mathbb{Z}/2)^2 \to \pi_1(\mathrm{hofib}(\gamma_0)) \hookrightarrow
  \pi_1(\mathrm{hofib}(\gamma_0)) \oplus \mathbb{Z}/2 \text{,}\]
where the first map is the map $(\mathbb{Z}/2)^2 \cong \text{coker}(\pi_2(\gamma_0)) \to \pi_1(\text{hofib}(\gamma_0))$ from the exact
sequence in Lemma~\ref{lem:pi_1_x_n}, and the second map is the inclusion of the
first factor. The exact sequence from Lemma~\ref{lem:pi_1_x_n} tells us that
$\pi_1(\mathrm{hofib}(\gamma_0))/(\mathbb{Z}/2)^2 \cong \mathbb{Z}/2$, so we find
\[\pi_1(\mathrm{TCR}(\mathbb{Z}/4)^{\phi \mathbb{Z}/2}) \cong
\pi_1(\mathrm{hofib}(\gamma_0))/(\mathbb{Z}/2)^2 \oplus \mathbb{Z}/2 \cong
(\mathbb{Z}/2)^2 \mathrm{.} \qedhere\]
\end{proof}

\begin{remark}
Unfortunately the same approach is not enough on its own to compute higher
homotopy groups of $\mathrm{TCR}(\mathbb{Z}/4)^{\phi \mathbb{Z}/2}$. For example
we run into an extension problem when computing $\pi_2$
of the homotopy kernel of $\gamma_1$, and
this time we would actually need to solve this extension problem
in order to compute $\pi_2(\mathrm{TCR}(\mathbb{Z}/4)^{\phi
  \mathbb{Z}/2})$. Nevertheless these techniques can potentially constrain the set of possible homotopy
groups.  For instance applying this analysis to $\pi_4(\mathrm{TCR}(\mathbb{Z}/4)^{\phi
  \mathbb{Z}/2})$ gives a second proof that this group is $4$-torsion (note we
already knew this since it is a $\pi_0(\mathrm{TCR}(\mathbb{Z}/4)^{\phi
  \mathbb{Z}/2})$-module).
\end{remark}

\section{Asymptotic growth estimates}\label{sec:asymptotic}

Our results so far show that the first few homotopy groups of
$\mathrm{TCR}(\mathbb{Z}/4)^{\phi \mathbb{Z}/2}$ do not obey a $4$-periodic
pattern. We would like to further prove that the homotopy groups grow arbitrarily large,
and so there is no point beyond which they start repeating with any period. To
show this we will make asymptotic estimates that show that the logarithm of
the size of the group $\pi_i(\mathrm{TCR}(\mathbb{Z}/4)^{\phi \mathbb{Z}/2})$
grows quadratically in $i$. In order to do so we almost completely
compute the groups $L^{(n)}_iF_{\mathbb{Z}/4}(\mathbb{Z}/2)$ for all $i$ and $n$; this is a
result that may be of independent interest.

Recall as in Theorem~\ref{thm:tcr_ses_low_deg} the long exact sequence
associated to the homotopy fibre of $r-f$ and the resulting short exact
sequences. If we can estimate the sizes of the homotopy groups involved in the
long exact sequence, then this gives
us constraints on the sizes of the kernels and cokernels appearing in the short
exact sequences, and so gives bounds on the size of
$\pi_i(\mathrm{TCR}(\mathbb{Z}/4)^{\phi \mathbb{Z}/2})$. In particular note we get
these estimates without needing any control over the maps in the long exact
sequence. We describe this
general technique in the following lemma.

\begin{lemma} \label{lem:size_est}
  Let
  \[A \xrightarrow{u} B \to C \to D \xrightarrow{v} E\]
  be an exact sequence of finite abelian groups. Then we have bounds
  \[\mathrm{max}\left(1, \frac{\abs{B}}{\abs{A}}\right) \cdot \mathrm{max}\left(1,
      \frac{\abs{D}}{\abs{E}}\right) \le \abs{C} \le \abs{B} \cdot \abs{D} \text{.}\]
\end{lemma}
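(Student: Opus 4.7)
The plan is to extract a short exact sequence containing $C$ from the length five exact sequence, compute $|C|$ as a product of two orders, and then bound each factor above and below.

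First I would observe that exactness at $B$, $C$, and $D$ yields a short exact sequence
\[0 \to B/\mathrm{im}(u) \to C \to \mathrm{ker}(v) \to 0 \mathrm{,}\]
since the image of $B \to C$ equals $B/\ker(B \to C) = B/\mathrm{im}(u)$, and the image of $C \to D$ equals $\mathrm{ker}(v)$. Assuming all groups are finite (otherwise the inequalities become trivial statements about $\infty$), this gives the exact formula $\abs{C} = \abs{B/\mathrm{im}(u)} \cdot \abs{\mathrm{ker}(v)}$.

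Next I would bound each factor. For $B/\mathrm{im}(u)$ we have $\abs{B/\mathrm{im}(u)} = \abs{B}/\abs{\mathrm{im}(u)}$, and since $\mathrm{im}(u)$ is a quotient of $A$ sitting inside $B$, we have $1 \le \abs{\mathrm{im}(u)} \le \min(\abs{A}, \abs{B})$. This yields
\[\max\left(1, \frac{\abs{B}}{\abs{A}}\right) \le \abs{B/\mathrm{im}(u)} \le \abs{B} \mathrm{.}\]
Similarly, $\mathrm{ker}(v) \le D$ with $D/\mathrm{ker}(v) \cong \mathrm{im}(v) \le E$, so $\abs{\mathrm{ker}(v)} = \abs{D}/\abs{\mathrm{im}(v)}$ with $1 \le \abs{\mathrm{im}(v)} \le \min(\abs{D}, \abs{E})$, giving
\[\max\left(1, \frac{\abs{D}}{\abs{E}}\right) \le \abs{\mathrm{ker}(v)} \le \abs{D} \mathrm{.}\]
Multiplying the two pairs of bounds gives the claimed inequalities.

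There is no real obstacle here: the argument is a direct bookkeeping computation once one identifies the middle short exact sequence, and the only care needed is inserting the $\max(1, {-})$ to cover the case where $\abs{A} > \abs{B}$ or $\abs{E} > \abs{D}$, in which the raw ratio $\abs{B}/\abs{A}$ or $\abs{D}/\abs{E}$ would be smaller than the trivial lower bound coming from the fact that $\abs{C}$ is a positive integer.
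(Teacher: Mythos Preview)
Your proof is correct and follows essentially the same approach as the paper: extract the short exact sequence $0 \to \mathrm{coker}(u) \to C \to \mathrm{ker}(v) \to 0$, compute $\abs{C}$ as the product, and bound each factor using $1 \le \abs{\mathrm{im}(u)} \le \min(\abs{A},\abs{B})$ and $1 \le \abs{\mathrm{im}(v)} \le \min(\abs{D},\abs{E})$. The paper's proof is slightly terser but the content is identical.
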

\begin{proof}
  We have a short exact sequence
  \[0 \to \text{coker}(u) \to C \to \text{ker}(v) \to 0\]
  so
  \[\abs{C} = \abs{\text{coker}(u)} \cdot \abs{\text{ker}(v)} \text{.}\]
  And we have bounds
  \[\text{max}\left(1, \frac{\abs{B}}{\abs{A}}\right) \le \abs{\text{coker}(u)} \le \abs{B}\]
  and
  \[\text{max}\left(1, \frac{\abs{D}}{\abs{E}}\right) \le \abs{\text{ker}(v)}
    \le \abs{D} \text{,}\]
  where note that if $\abs{B} < \abs{A}$ we take the trivial lower bound $\abs{\text{coker}(u)} \ge 1$, and similarly if $\abs{D} < \abs{E}$ we take
  the trivial lower bound $\abs{\text{ker}(v)} \ge 1$.
\end{proof}

Applying this lemma to the long exact sequence for $\mathrm{TCR}(\mathbb{Z}/4)^{\phi \mathbb{Z}/2}$, appropriate bounds on the sizes of $\pi_j((\mathrm{THR}(\mathbb{Z}/4)^{\phi
\mathbb{Z}/2})^{C_2})$ and $\pi_j(\mathrm{THR}(\mathbb{Z}/4)^{\phi
\mathbb{Z}/2})$ will give us upper and lower bounds for the size of $\pi_i(\mathrm{TCR}(\mathbb{Z}/4)^{\phi \mathbb{Z}/2})$.

First recall that we entirely understand the non-equivariant homotopy groups of $\mathrm{THR}(\mathbb{Z}/4)^{\phi
  \mathbb{Z}/2}$.

\begin{proposition}\label{prop:thr_phi2}
  The group $\pi_i(\mathrm{THR}(\mathbb{Z}/4)^{\phi
        \mathbb{Z}/2})$ has order $2^{\frac{1}{2}(i+1)(i+2)}$.
\end{proposition}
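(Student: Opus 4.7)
The plan is essentially a direct counting argument using the decomposition that has already been established. By Proposition~\ref{prop:pi_thr_as_sum}, we have
\[\pi_i(\mathrm{THR}(\mathbb{Z}/4)^{\phi \mathbb{Z}/2}) \cong \bigoplus_{\substack{n,m,k \ge 0\\n+m+k=i}} \mathbb{Z}/2,\]
so the order of the group is $2^N$, where $N$ is the number of triples $(n,m,k) \in \mathbb{Z}_{\ge 0}^3$ satisfying $n+m+k = i$.

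The second step is to compute $N$ by the standard stars-and-bars formula, giving $N = \binom{i+2}{2} = \tfrac{1}{2}(i+1)(i+2)$. Substituting into $2^N$ yields the claimed order $2^{(i+1)(i+2)/2}$.

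There is no real obstacle here — the content of the proposition lies entirely in Proposition~\ref{prop:pi_thr_as_sum}, which was obtained from the decomposition of $\mathrm{THR}(\mathbb{Z}/4)^{\phi\mathbb{Z}/2}$ on underlying spectra (Corollary~\ref{cor:decomp} and Remark~\ref{rem:thr_underlying_decomp}) together with the computation $H\mathbb{Z}/2 \otimes_{H\mathbb{Z}/4} H\mathbb{Z}/2 \simeq \bigoplus_{k \ge 0} \Sigma^k H\mathbb{Z}/2$. Once the $\mathbb{Z}/2$-vector space structure is known, only a trivial combinatorial count remains, which is exactly the kind of closed-form expression needed to feed into the asymptotic estimates of Lemma~\ref{lem:size_est} in the subsequent analysis.
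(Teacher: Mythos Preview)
Your proof is correct and follows the same approach as the paper: both invoke Proposition~\ref{prop:pi_thr_as_sum} and then count the number of triples $(n,m,k)$ with $n+m+k=i$. The only cosmetic difference is that you cite stars-and-bars for $\binom{i+2}{2}$, whereas the paper writes out the sum $\sum_{n=0}^i(i-n+1)$ explicitly.
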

\begin{proof}
  Recall from Proposition~\ref{prop:pi_thr_as_sum} that
  \[\pi_i(\mathrm{THR}(\mathbb{Z}/4)^{\phi \mathbb{Z}/2}) \cong \bigoplus_{\substack{n,m,k
    \ge 0\\n+m+k=i}} \mathbb{Z}/2 \mathrm{.}\]
The number of $\mathbb{Z}/2$-summands is the number of ways of writing $i$ as
the sum of three non-negative integers $i = n+m+k$. For each $0 \le n \le i$,
there are $i-n+1$ possible choices of non-negative integers $m, k$ such that $m+k = i-n$. So in
total we have
\[\sum_{n = 0}^i i-n+1 = \frac{1}{2}(i+1)(i+2)\]
summands.
\end{proof}

As before, analysing $\pi_i((\mathrm{THR}(\mathbb{Z}/4)^{\phi
  \mathbb{Z}/2})^{C_2})$ takes more work.

\begin{lemma}\label{lem:thr_phi2_c2_order_prelim}
  The group $\pi_i((\mathrm{THR}(\mathbb{Z}/4)^{\phi
        \mathbb{Z}/2})^{C_2})$ has order
      \[2^{\stackrel{\left\lfloor(i+1)^2/4\right\rfloor}{\phantom{.}}}
        \cdot \prod_{0 \le n \le i}\abs{L_i^{(n)}F_{\mathbb{Z}/4}(\mathbb{Z}/2)} \mathrm{.}\]
\end{lemma}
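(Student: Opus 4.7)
The plan is to exploit the decomposition of Corollary~\ref{cor:decomp} and read off the order of $\pi_i$ summand by summand. Applying $\pi_i$ to the direct sum splits the group as a diagonal contribution plus an off-diagonal contribution; we handle each separately.

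First I would dispatch the diagonal summands. By Corollary~\ref{cor:F_non_ab}, the $n$-indexed diagonal summand contributes $L_i^{(n)}F_{\mathbb{Z}/4}(\mathbb{Z}/2)$ to $\pi_i$. Remark~\ref{rem:nonab_i_less_n} tells us $L_i^{(n)}F_{\mathbb{Z}/4}(\mathbb{Z}/2) = 0$ for $n > i$, so only the terms $0 \le n \le i$ can contribute. This immediately yields the factor $\prod_{0 \le n \le i}\abs{L_i^{(n)}F_{\mathbb{Z}/4}(\mathbb{Z}/2)}$.

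Next I would handle the off-diagonal part. Proposition~\ref{prop:pi_thr_c2_as_sum} gives
\[\pi_i\left( \bigoplus_{0 \le n < m} \Sigma^{n+m} H\mathbb{Z}/2 \otimes_{H\mathbb{Z}/4} H\mathbb{Z}/2 \right) \cong \bigoplus_{\substack{n,m,k \ge 0\\n < m\\n+m+k=i}} \mathbb{Z}/2 \text{,}\]
so the off-diagonal contribution to the order is $2^{N(i)}$ where $N(i)$ is the number of triples $(n,m,k)$ of non-negative integers satisfying $n < m$ and $n+m+k = i$. It remains to verify $N(i) = \lfloor (i+1)^2/4 \rfloor$. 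Fixing $n$, the condition $m > n$ with $m + k = i-n$, $k \ge 0$ forces $n+1 \le m \le i-n$, giving $\max(0, i-2n)$ choices, so $N(i) = \sum_{n=0}^{\lfloor (i-1)/2 \rfloor}(i-2n)$. A short telescoping calculation, splitting into $i = 2k$ and $i = 2k+1$ cases, evaluates this to $k(k+1)$ and $(k+1)^2$ respectively, both of which equal $\lfloor (i+1)^2/4 \rfloor$.

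Multiplying the two contributions gives the stated order. There is no real obstacle here; the only mildly non-routine step is the combinatorial identification of the off-diagonal count with $\lfloor (i+1)^2/4 \rfloor$, which reduces to the elementary sum above.
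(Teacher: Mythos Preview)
Your proof is correct and follows essentially the same approach as the paper: both use the decomposition of Corollary~\ref{cor:decomp}, identify the diagonal contribution via Corollary~\ref{cor:F_non_ab} and the off-diagonal contribution via Proposition~\ref{prop:pi_thr_c2_as_sum}, and then do an elementary count of the off-diagonal $\mathbb{Z}/2$ summands. The only cosmetic difference is that the paper organises the count by summing over $k$ rather than over $n$, arriving at the same $\lfloor (i+1)^2/4 \rfloor$.
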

\begin{proof}
  Recall from Corollary~\ref{cor:decomp} that
  \begin{align*}
    (\mathrm{THR}(\mathbb{Z}/4)^{\phi \mathbb{Z}/2})^{C_2} \simeq &\bigoplus_{n \ge 0} \left( \Sigma^{n \rho}H\mathbb{Z}/4 \otimes_{N_{\{e\}}^{C_2}(H\mathbb{Z}/4)} N_{\{e\}}^{C_2}(H\mathbb{Z}/2) \right)^{C_2} \\
    & \oplus \bigoplus_{0 \le n < m} \Sigma^{n+m} H\mathbb{Z}/2 \otimes_{H\mathbb{Z}/4} H\mathbb{Z}/2 \mathrm{.}\end{align*}
  By Corollary~\ref{cor:F_non_ab} we know that the $i$th homotopy
  group of the $n$-indexed diagonal summand is computed using non-abelian
  derived functors as $L_i^{(n)}F_{\mathbb{Z}/4}(\mathbb{Z}/2)$, and by general properties of
  non-abelian derived functors these groups are zero for $n > i$. So the diagonal part has order
  \[\prod_{0 \le n \le i}\abs{L_i^{(n)}F_{\mathbb{Z}/4}(\mathbb{Z}/2)} \mathrm{.}\]
  As we observed in Proposition~\ref{prop:pi_thr_c2_as_sum} the homotopy of the
  off-diagonal terms is given by
\begin{equation*}\pi_i\left( \bigoplus_{0 \le n < m} \bigoplus_{k \ge 0} \Sigma^{n+m+k}
    H\mathbb{Z}/2 \right) \cong
  \bigoplus_{\substack{n,m,k \ge 0\\n < m\\n+m+k=i}} \mathbb{Z}/2 \mathrm{.}\end{equation*}
Let us count the number of
$\mathbb{Z}/2$ summands appearing here. For each $0 \le k \le i$, there are
$\floor{(i-k-1)/2}+1$ ways to pick non-negative integers $n, m$ with $n < m$ and
$n+m = i-k$, so the total number of summands is
\begin{align*}\sum_{k = 0}^i \floor{(i-k-1)/2}+1 &= \begin{cases}i(i+2)/4 & \text{if $i$ is even}\\
  (i+1)^2/4 & \text{if $i$ is odd.}\end{cases}\\
&= \left\lfloor \frac{(i+1)^2}{4} \right\rfloor
  \end{align*}
  Multiplying the orders of the diagonal and off-diagonal parts gives the result.
\end{proof}

\subsection{Non-abelian derived functor computations}
\label{sec:nadf}

To make further progress we need to study $L_i^{(n)}F_{\mathbb{Z}/4}(\mathbb{Z}/2)$. As a
warm-up, let us compute $L_i^{(n)}F_{\mathbb{Z}/4}(\mathbb{Z}/4)$; this will come in useful later.

\begin{lemma}\label{lem:derived_f_z4}
  We have
  \[L_i^{(n)}F_{\mathbb{Z}/4}(\mathbb{Z}/4) \cong \pi_i^{C_2}(\Sigma^{n\rho} H\mathbb{Z}/4)
    \cong \begin{cases}\mathbb{Z}/4 & \text{if $n$ is
    even and $i = 2n$}\\\mathbb{Z}/2 & \text{if $n \le i < 2n$, or if $n$ is odd
                                      and $i = 2n$}\\ 0 & \text{otherwise}\end{cases}\]
\end{lemma}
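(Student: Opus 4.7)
The plan is to identify $L_i^{(n)}F_{\mathbb{Z}/4}(\mathbb{Z}/4)$ with the claimed equivariant homotopy group via Corollary~\ref{cor:F_non_ab}, then compute the latter by induction on $n$ using cofibre sequences of representation spheres.

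For the first isomorphism, I would apply Corollary~\ref{cor:F_non_ab} with $R = M = \mathbb{Z}/4$. The tensor product collapses since we are tensoring $H\mathbb{Z}/4$ against its base ring $N_{\{e\}}^{C_2}(H\mathbb{Z}/4)$: we have $H\mathbb{Z}/4 \otimes_{N_{\{e\}}^{C_2}(H\mathbb{Z}/4)} N_{\{e\}}^{C_2}(H\mathbb{Z}/4) \simeq H\mathbb{Z}/4$, so Corollary~\ref{cor:F_non_ab} immediately specializes to $L_i^{(n)}F_{\mathbb{Z}/4}(\mathbb{Z}/4) \cong \pi_i^{C_2}(\Sigma^{n\rho} H\underline{\mathbb{Z}/4})$.

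For the explicit values, set $a_{i,n} := \pi_i^{C_2}(\Sigma^{n\sigma} H\underline{\mathbb{Z}/4})$; since $\rho = 1 + \sigma$, we have $\pi_i^{C_2}(\Sigma^{n\rho} H\underline{\mathbb{Z}/4}) = a_{i-n,n}$. Smashing the basic cofibre sequence $(C_2)_+ \to S^0 \xrightarrow{a_\sigma} S^\sigma$ with $\Sigma^{(n-1)\sigma} H\underline{\mathbb{Z}/4}$ produces a long exact sequence linking $a_{i,n}$ to $a_{i,n-1}$ through the underlying homotopy $\pi_*(\Sigma^{n-1} H\mathbb{Z}/4)$, which is $\mathbb{Z}/4$ concentrated in degree $n-1$. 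I would then establish by simultaneous induction on $n$ that $a_{i,n}$ is nonzero only for $0 \le i \le n$; that $a_{i,n} = a_{i,i}$ for $0 \le i < n$ (so in particular $a_{0,n} = \mathbb{Z}/2$ for $n \ge 1$); and that $a_{n,n} = \ker(\mathrm{tr}: \mathbb{Z}/4 \to a_{n-1,n-1})$, where $\mathrm{tr}$ is the transfer attached to $\Sigma^{(n-1)\sigma} H\underline{\mathbb{Z}/4}$ in degree $n-1$. Re-indexing via $b_{i,n} = a_{i-n,n}$ then yields the claimed ranges $n \le i < 2n$ and $i = 2n$.

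The main obstacle is pinning down the transfer $\mathrm{tr}: \mathbb{Z}/4 \to a_{n-1,n-1}$ at each step. The key input is the Mackey-functor identity $\mathrm{res} \circ \mathrm{tr} = 1 + g$ on the underlying $\pi_{n-1}^e(\Sigma^{(n-1)\sigma} H\underline{\mathbb{Z}/4}) = \mathbb{Z}/4$, where $g$ acts via the sign representation as $(-1)^{n-1}$. When $n$ is even ($g = -1$) this composite is zero, so together with the inductively identified injective restriction $\mathbb{Z}/2 \hookrightarrow \mathbb{Z}/4$ the transfer must vanish and $a_{n,n} = \mathbb{Z}/4$. When $n$ is odd ($g = 1$) the composite is multiplication by $2$, and one argues the transfer has image of order $2$, so $a_{n,n} = \mathbb{Z}/2$. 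A cleaner alternative is to apply the Mackey-functor short exact sequence $0 \to \underline{\mathbb{Z}/2} \to \underline{\mathbb{Z}/4} \to \underline{\mathbb{Z}/2} \to 0$ to reduce to the $H\underline{\mathbb{Z}/2}$ case, where the analogous groups are easily seen to be $\mathbb{F}_2$ in every degree of the interval (transfers vanishing for a $2$-torsion constant Mackey functor); one then inductively identifies the Bocksteins of the resulting long exact sequence to recover the $\underline{\mathbb{Z}/4}$ answer.
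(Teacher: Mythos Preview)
Your identification of the first isomorphism via Corollary~\ref{cor:F_non_ab} is exactly what the paper does. For the computation of $\pi_i^{C_2}(\Sigma^{n\rho}H\mathbb{Z}/4)$, however, you take a genuinely different route. The paper proceeds directly: it recognises these groups as the Bredon homology of $S^{n\rho}$ with coefficients in the constant Mackey functor $\underline{\mathbb{Z}/4}$, writes down the cellular chain complex (which has $\mathbb{Z}/4$ in each degree $n,\dotsc,2n$ with differentials alternating between $0$ and $2$, the pattern depending on the parity of $n$), and reads off the homology. Your inductive approach via the cofibre sequence $(C_2)_+ \to S^0 \to S^\sigma$ is correct in principle and is a standard way to access such groups, but it is considerably more laborious here: as you yourself note, one must track restriction and transfer maps through the induction, and the case analysis on parity enters only indirectly via the action of $g$. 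The Bredon chain complex packages all of this at once and makes the parity dependence transparent. Your alternative via the Mackey short exact sequence $0 \to \underline{\mathbb{Z}/2} \to \underline{\mathbb{Z}/4} \to \underline{\mathbb{Z}/2} \to 0$ is also viable but again introduces a Bockstein long exact sequence to analyse. Both of your strategies would ultimately work, but the paper's direct cellular computation is the shortest path.
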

\begin{proof}
  By the same reasoning as Corollary~\ref{cor:F_non_ab}, Lemma~\ref{lem:expansion_to_em} gives
  \[L_i^{(n)}F_{\mathbb{Z}/4}(\mathbb{Z}/4) \cong \pi_i^{C_2}\left(\Sigma^{n\rho} H\mathbb{Z}/4
      \otimes_{N_{\{e\}}^{C_2}(H\mathbb{Z}/4)}
      N^{C_2}_{\{e\}}(H\mathbb{Z}/4)\right) \mathrm{.}\]
  But $\Sigma^{n\rho} H\mathbb{Z}/4
      \otimes_{N_{\{e\}}^{C_2}(H\mathbb{Z}/4)}
      N^{C_2}_{\{e\}}(H\mathbb{Z}/4) \cong \Sigma^{n\rho} H\mathbb{Z}/4$ so
  \[L_i^{(n)}F_{\mathbb{Z}/4}(\mathbb{Z}/4) \cong \pi_i^{C_2}\left(\Sigma^{n\rho} H\mathbb{Z}/4 \right) \mathrm{.}\]
  This is precisely the Bredon homology of $S^{n \rho}$ with coefficients in the
  Mackey functor of the abelian group $\mathbb{Z}/4$ with trivial involution.
  For $n$ even this is the homology of the chain complex
  \[\dotsb 0 \to \mathbb{Z}/4 \xrightarrow{0} \mathbb{Z}/4 \xrightarrow{2}
    \dotsb \mathbb{Z}/4 \xrightarrow{2} \mathbb{Z}/4 \xrightarrow{0}
    \mathbb{Z}/4 \xrightarrow{2} \mathbb{Z}/4 \to 0 \dotsb \]
  where the non-zero groups are in degrees $n$ to $2n$. For $n$ odd the
  chain complex looks like
  \[\dotsb 0 \to \mathbb{Z}/4 \xrightarrow{2} \mathbb{Z}/4 \xrightarrow{0}
    \dotsb \mathbb{Z}/4 \xrightarrow{2} \mathbb{Z}/4 \xrightarrow{0}
    \mathbb{Z}/4 \xrightarrow{2} \mathbb{Z}/4 \to 0 \dotsb \]
  with non-zero groups in the same degrees. The homology of these complexes
  gives the claimed result.
\end{proof}

Now we recall some of the theory of non-abelian derived functors from
\cite{dold_non-additive_1958} and \cite{simson_connected_1974}.

\begin{definition}[Cross effect, originally introduced in \cite{eilenberg_groups_1953}]
  Let $T : \mathrm{Mod}_R \to \mathrm{Ab}$ be a functor with $T(0) = 0$. The second
  cross-effect is the functor $T_2 : (\mathrm{Mod}_R)^2 \to \mathrm{Ab}$ defined
  on objects by
  \[T_2(A_1, A_2) \coloneqq \text{Im}\big(\text{id}_{T(A_1 \oplus T_2)} - T(i_{A_1}p_{A_1})
    - T(i_{A_2}p_{A_2})\big)\]
  where $i_{A_i}$, $p_{A_i}$ are the canonical injections and projections of $A_i$
  into and out of $A_1 \oplus A_2$. The functor $T_2$ is defined on morphisms in
  a natural way; see \cite{simson_connected_1974}~Section~4 for full details.

  We define higher cross effects $T_k : (\text{Mod}_R)^k \to \text{Ab}$ inductively via
  \[T_k(A_1, \dotsc, A_k) \coloneqq \big(T_{k-1}(A_1, \dotsc, A_{k-2}, {-})\big)_{2}(A_{k-1},
    A_k) \text{.}\]
\end{definition}

\begin{lemma}\label{lem:f_cross_effect}
  The functor $F_R(M) \coloneqq (M \otimes_R M)^{C_2}$ has second cross-effect
  \[A_1, A_2 \mapsto A_1 \otimes_R A_2 \text{,}\]
  and third cross-effect zero---accordingly we say
  that $F_R$ is quadratic.
\end{lemma}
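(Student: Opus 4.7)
The plan is to directly compute $F_R(A_1 \oplus A_2)$ and identify the three summands in its decomposition, then show that the cross-effect projector extracts precisely the desired off-diagonal part.

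First, using distributivity of $\otimes_R$ over direct sum, I would write
\[(A_1 \oplus A_2) \otimes_R (A_1 \oplus A_2) \cong (A_1 \otimes_R A_1) \oplus (A_2 \otimes_R A_2) \oplus (A_1 \otimes_R A_2) \oplus (A_2 \otimes_R A_1) \mathrm{.}\]
The swap $C_2$-action preserves the two diagonal summands and interchanges the two off-diagonal summands. Taking fixed points gives
\[F_R(A_1 \oplus A_2) \cong F_R(A_1) \oplus F_R(A_2) \oplus (A_1 \otimes_R A_2) \mathrm{,}\]
where the final summand is identified with the fixed points of $(A_1 \otimes_R A_2) \oplus (A_2 \otimes_R A_1)$ via the antidiagonal embedding $a \otimes b \mapsto (a \otimes b,\, b \otimes a)$.

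Next I would analyse the two idempotents. The composition $i_{A_1} p_{A_1}: A_1 \oplus A_2 \to A_1 \oplus A_2$ is the projection onto $A_1$, and applying $F_R$ gives the projection $F_R(A_1 \oplus A_2) \to F_R(A_1)$ followed by the inclusion back; concretely, $F_R(i_{A_1} p_{A_1})$ kills both $F_R(A_2)$ and $A_1 \otimes_R A_2$ and is the identity on $F_R(A_1)$. Symmetrically for $F_R(i_{A_2} p_{A_2})$. Therefore $\mathrm{id} - F_R(i_{A_1} p_{A_1}) - F_R(i_{A_2} p_{A_2})$ is precisely the projection onto the middle summand $A_1 \otimes_R A_2$, whose image is $A_1 \otimes_R A_2$ as claimed.

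Finally, for the third cross-effect, I would observe that the second cross-effect $(F_R)_2(A_1, {-}) = A_1 \otimes_R ({-})$ is an additive functor in its second argument. For any additive functor $T$, the formula $T(B_1 \oplus B_2) = T(B_1) \oplus T(B_2)$ together with $T(i_{B_j} p_{B_j}) = T(i_{B_j}) T(p_{B_j})$ shows that $\mathrm{id} - T(i_{B_1} p_{B_1}) - T(i_{B_2} p_{B_2}) = 0$, so the second cross-effect of $T$ vanishes. Applying this with $T = (F_R)_2(A_1, {-})$ gives $(F_R)_3 = 0$. There is no real obstacle here; the only small point to double-check is that the identification of the cross-effect as an actual functor (not just a set-theoretic image) is compatible with these maps, which follows from the naturality setup in \cite{simson_connected_1974}~Section~4.
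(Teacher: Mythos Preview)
Your proof is correct and follows essentially the same approach as the paper's; you simply spell out in detail the decomposition of $F_R(A_1 \oplus A_2)$ and the action of the idempotents that the paper compresses into a single displayed isomorphism, and your argument for the vanishing of the third cross-effect via additivity of $A_1 \otimes_R ({-})$ is identical to the paper's.
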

\begin{proof}
  We compute
  \[\left( F_R \right)_2(A_1, A_2) \coloneqq \text{Im}\left(\text{id}_{F_R(A_1 \oplus A_2)} -
    F_R(i_{A_1}p_{A_1}) - F_R(i_{A_2}p_{A_2})\right) \cong A_1 \otimes_R A_2 \text{.}\]
For the third cross-effect we have
  \[\left(F_R  \right)_3(A_1, A_2, A_3) \coloneqq \left(\left( F_R \right)_2(A_1,
      {-})\right)_2(A_2, A_3) \text{.}\]
  But $\left( F_R \right)_2(A_1,
      {-}) = A_1 \otimes_R ({-})$ is additive, so has second cross-effect
      zero. Hence $F_R$ has third cross-effect zero.
\end{proof}

\begin{lemma}\label{lem:f_g_exact}
  Let $R$ be a commutative ring and let $M$ be an $R$-module. We have a long exact sequence
  \[\scalebox{0.75}{$\dotsb$}\! \to L^{(n)}_iG_R(M) \to L^{(n)}_iF_R(M) \to
    L^{(n+1)}_{i+1}F_R(M) \to L^{(n)}_{i-1} G_R(M) \to
    L^{(n)}_{i-1}F_R(M) \to \!\scalebox{0.75}{$\dotsb$}\]
  where $G_R(M) = M \otimes_R M$ is the tensor square functor.

  We have $L^{(n)}_iG_R(M) = 0$ for $i < 2n$, hence $L^{(n)}_iF_R(M) \cong
  L^{(n+1)}_{i+1}F_R(M)$ for $i < 2n$. Explicitly this isomorphism is
  given by the suspension map $\sigma_i^{(n)}(M)$ of
  \cite{dold_non-additive_1958} Equation~2.4.
\end{lemma}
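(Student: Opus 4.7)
The plan is to realise the desired long exact sequence as the homotopy long exact sequence of a cofibre sequence of $C_2$-spectra obtained from a standard cofibre sequence involving the sign representation of $C_2$. Set
\[X_n \coloneqq \Sigma^{n\rho} HR \otimes_{N^{C_2}_{\{e\}}(HR)} N^{C_2}_{\{e\}}(HM)\mathrm{,}\]
so that by Corollaries~\ref{cor:F_non_ab} and \ref{cor:G_non_ab} we have $L^{(n)}_i F_R(M) \cong \pi_i^{C_2}(X_n)$ and $L^{(n)}_i G_R(M) \cong \pi_i(X_n)$.

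The key input I will use is the cofibre sequence of pointed $C_2$-spaces
\[(C_2)_+ \to S^0 \to S^\sigma\mathrm{,}\]
where $\sigma$ is the sign representation of $C_2$ (so that $\rho = 1 + \sigma$), arising from the inclusion $S(\sigma)_+ \hookrightarrow D(\sigma)_+$ together with the $C_2$-equivariant contraction $D(\sigma)_+ \simeq S^0$. Smashing with $X_n$ and taking $C_2$-fixed points gives a cofibre sequence of spectra
\[X_n \to X_n^{C_2} \to (\Sigma^\sigma X_n)^{C_2}\mathrm{,}\]
where the identification $((C_2)_+ \wedge X_n)^{C_2} \simeq X_n$ uses that $(C_2)_+ \wedge X_n$ is induced from the restriction of $X_n$ to the trivial subgroup. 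Using $\Sigma^\rho \simeq \Sigma \Sigma^\sigma$ I get $\Sigma^\sigma X_n \simeq \Sigma^{-1} X_{n+1}$, so
\[\pi_i\bigl((\Sigma^\sigma X_n)^{C_2}\bigr) \cong \pi^{C_2}_{i+1}(X_{n+1}) \cong L^{(n+1)}_{i+1} F_R(M)\mathrm{.}\]
Writing down the long exact sequence in homotopy groups and substituting these identifications produces exactly the claimed sequence, with the map $L^{(n)}_i G_R(M) \to L^{(n)}_i F_R(M)$ being the transfer and the map $L^{(n)}_i F_R(M) \to L^{(n+1)}_{i+1} F_R(M)$ being induced by the fundamental class $S^0 \to S^\sigma$.

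For the vanishing statement, on underlying spectra $X_n \simeq \Sigma^{2n}(HM \otimes_{HR} HM)$ is $2n$-connective, giving $L^{(n)}_i G_R(M) = 0$ for $i < 2n$. For such $i$ also $L^{(n)}_{i-1} G_R(M) = 0$, so the long exact sequence forces $L^{(n)}_i F_R(M) \to L^{(n+1)}_{i+1} F_R(M)$ to be an isomorphism. The hard part will be the final identification of this connecting isomorphism with the Dold-Puppe suspension $\sigma^{(n)}_i(M)$ of \cite{dold_non-additive_1958}~Equation~2.4: the map in our long exact sequence is defined spectrally via $S^0 \to S^\sigma$, whereas $\sigma^{(n)}_i$ is defined by a chain-level shift of simplicial resolutions. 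To match them, I would unravel our spectral construction using Lemma~\ref{lem:expansion_to_em} so that everything is expressed at the level of simplicial $R$-modules, and then check directly that the induced map on the simplicial model of $F_R$ coincides with Dold-Puppe's. This is the main technical obstacle, but it is essentially a routine compatibility check at the level of chain complexes.
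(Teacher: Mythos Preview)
Your argument is correct and is a genuinely different proof from the paper's. The paper simply invokes Corollary~3.9 of \cite{dold_non-additive_1958}: since by Lemma~\ref{lem:f_cross_effect} the functor $F_R$ is quadratic with second cross-effect $(F_R)_2(A,A) = G_R(A)$, the Dold--Puppe machinery for quadratic functors produces the long exact sequence directly at the simplicial level, and the suspension map $\sigma_i^{(n)}(M)$ appears there by construction. Your approach instead uses the equivariant-spectral identifications of Corollaries~\ref{cor:F_non_ab} and~\ref{cor:G_non_ab} together with the cofibre sequence $(C_2)_+ \to S^0 \to S^\sigma$; this is elegant and entirely self-contained within the paper's framework, and in fact is closely parallel to the argument the paper uses later in Lemma~\ref{lem:sigma_Z4_C2_fibre_seq} and Corollary~\ref{cor:n_nplus_exact}. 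What the citation buys is that the identification of the connecting map with Dold--Puppe's $\sigma_i^{(n)}(M)$ comes for free; in your approach this remains, as you acknowledge, a separate compatibility check between the spectral map induced by $S^0 \to S^\sigma$ and the simplicial suspension, and while plausible it is not quite as routine as you suggest. Conversely, your proof makes the maps in the sequence (transfer, then $S^0 \to S^\sigma$) completely transparent geometrically, which the bare citation does not.
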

\begin{proof}
  This follows by Corollary~3.9 of \cite{dold_non-additive_1958} and surrounding
  remarks, noting by the previous lemma that $\left( F_R
  \right)_2(A, A) = G_R(A)$ and $F_R$ is quadratic.
\end{proof}
\begin{remark}\label{rem:l_f_z2_exact1}
We will use this lemma in the case $R = \mathbb{Z}/4$ and $M = \mathbb{Z}/2$, where recall from Lemma~\ref{lem:l_g_comp} that
  \[L^{(n)}_iG_{\mathbb{Z}/4}(\mathbb{Z}/2) \cong \begin{cases}0 & \text{if $i < 2n$}\\ \mathbb{Z}/2 & \text{if $i \ge
                                                             2n$.}\end{cases} \]
\end{remark}

  For $i < 2n$ we have isomorphisms
  \[L^{(n)}_iF_R(M) \xrightarrow[\cong]{\sigma_i^{(n)}(M)}
  L^{(n+1)}_{i+1}F_R(M) \xrightarrow[\cong]{\sigma_{i+1}^{(n+1)}(M)} L^{(n+2)}_{i+2}F_R(M) \xrightarrow[\cong]{\sigma_{i+2}^{(n+2)}(M)}
  \dotsb \mathrm{.}\]
Accordingly we can study these stable values. In fact this stabilisation occurs
for the non-abelian derived functors of any functor (\cite{dold_non-additive_1958} Equation~3.11), so we can make
the following general definition.

\begin{definition}
  Let $X : \mathrm{Mod}_R \to \mathrm{Ab}$ be a functor with $X(0) = 0$. The left stable derived functors of $X$ are defined by
  \[L_q^sX(M) \coloneqq \mathrm{colim}\left(L_{q}^{(0)}X(M)
      \xrightarrow{\sigma_q^{(0)}(M)} L_{q+1}^{(1)}X(M)
      \xrightarrow{\sigma_{q+1}^{(1)}X(M)} L_{q+2}^{(2)}X(M) \to \dotsb \right) \text{.}\]
\end{definition}
\begin{remark}\label{rem:stable_nonab_i_less_zero}
  Since by Remark~\ref{rem:nonab_i_less_n} $L_i^{(n)}X(M) = 0$ for $i < n$, we see that $L_q^sX(M) = 0$ for $q < 0$.
\end{remark}
\begin{remark}\label{rem:ls_f_z4}
  By Lemma~\ref{lem:derived_f_z4} we have $L_{q+n}^{(n)}F_{\mathbb{Z}/4}(\mathbb{Z}/4) \cong
  \mathbb{Z}/2$ for $n > q \ge 0$.
  So
  \[L^s_{q}F_{\mathbb{Z}/4}(\mathbb{Z}/4) \cong \mathbb{Z}/2\]
  for $q \ge 0$.
\end{remark}

Simson and Tyc study stable derived functors in \cite{simson_connected_1974}. In
particular they compute the left stable derived functor of J. H. C. Whitehead's
functor $\Lambda_R: \mathrm{Mod}_R \to \mathrm{Mod}_R$ for certain rings $R$. This is
of interest to us since $\Lambda_R$ and $F_R$ are naturally isomorphic on free modules, so
have the same non-abelian derived functors. While Simson and Tyc
do not consider the case $R = \mathbb{Z}/4$, we can still make use of some of their
techniques.

\begin{proposition}\label{prop:ls_f_z2}
  For $q \ge 0$ we have
  \[L_q^sF_{\mathbb{Z}/4}(\mathbb{Z}/2) \cong (\mathbb{Z}/2)^{q+1} \mathrm{.}\]
\end{proposition}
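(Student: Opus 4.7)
The plan is to relate $L_q^s F_{\mathbb{Z}/4}(\mathbb{Z}/2)$ to the $q$th homotopy of the geometric fixed points of the $C_2$-spectrum
\[E \coloneqq H\mathbb{Z}/4 \otimes_{N_{\{e\}}^{C_2}(H\mathbb{Z}/4)} N_{\{e\}}^{C_2}(H\mathbb{Z}/2),\]
and then to compute $E^{\phi C_2}$ directly. By Corollary~\ref{cor:F_non_ab} we have $L_i^{(n)} F_{\mathbb{Z}/4}(\mathbb{Z}/2) \cong \pi_i^{C_2}(\Sigma^{n\rho} E)$; writing $\rho = 1 + \sigma$ and setting $q = i - n$, this becomes $\pi^{C_2}_{q - n\sigma}(E)$. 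I would then verify that the Dold--Puppe suspension $\sigma^{(n)}_{n+q}$ in the stabilization tower corresponds, under this identification, to multiplication by the Euler class $a_\sigma \colon S^0 \to S^\sigma$ of the sign representation. This follows by unwinding the definition of $\sigma$ in terms of a shift of a free simplicial resolution, and using that on the spectrum side, shifting the argument by $\Sigma$ suspends the output by $\Sigma^\rho$ (reflecting the quadratic nature of $F$: smashing two copies of $S^1$ with the swap action produces $S^\rho$).

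Once this matching of maps is in place, the colimit defining $L^s_q$ inverts the action of $a_\sigma$ on $\pi^{C_2}_*(E)$, and the standard identification $\tilde{E}C_2 \simeq \mathrm{colim}_n\, S^{n\sigma}$ yields
\[L_q^s F_{\mathbb{Z}/4}(\mathbb{Z}/2) \cong \pi_q^{C_2}(E \wedge \tilde{E}C_2) \cong \pi_q(E^{\phi C_2}).\]
The monoidality of the geometric fixed-points functor together with the identity $N_{\{e\}}^{C_2}(HA)^{\phi C_2} \simeq HA$ for commutative rings $A$ then gives
\[E^{\phi C_2} \simeq (H\mathbb{Z}/4)^{\phi C_2} \otimes_{H\mathbb{Z}/4} H\mathbb{Z}/2,\]
where $(H\mathbb{Z}/4)^{\phi C_2}$ carries the Frobenius module structure of Definition~\ref{def:frobenius}. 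Invoking Lemma~\ref{lem:z4_phiz2_split} to split $(H\mathbb{Z}/4)^{\phi C_2} \simeq \bigoplus_{m \ge 0}\Sigma^m H\mathbb{Z}/2$ and a standard free resolution to compute $H\mathbb{Z}/2 \otimes_{H\mathbb{Z}/4} H\mathbb{Z}/2 \simeq \bigoplus_{k \ge 0}\Sigma^k H\mathbb{Z}/2$, we obtain
\[E^{\phi C_2} \simeq \bigoplus_{m, k \ge 0}\Sigma^{m+k} H\mathbb{Z}/2,\]
whose $q$th homotopy group is $(\mathbb{Z}/2)^{q+1}$, by counting pairs $(m, k)$ with $m + k = q$.

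The main obstacle is verifying the identification of the Dold--Puppe suspension $\sigma$ with the Euler class action $a_\sigma$; once this is established the rest is a formal application of equivariant monoidality and the earlier structural results. An alternative route would start from the short exact sequence $0 \to \mathbb{Z}/2 \xrightarrow{\cdot 2} \mathbb{Z}/4 \to \mathbb{Z}/2 \to 0$ and exploit that $F(\cdot 2)$ is zero (since $F$ is quadratic and $4 = 0$ in $\mathbb{Z}/4$) together with the connected-sequence property of stable derived functors and $L_q^s F_{\mathbb{Z}/4}(\mathbb{Z}/4) \cong \mathbb{Z}/2$ from Remark~\ref{rem:ls_f_z4}, inducting on $q$; however such an argument would still require a separate input to rule out $\mathbb{Z}/4$ summands in the resulting extensions, which the geometric computation handles automatically.
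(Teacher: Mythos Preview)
Your approach is correct and genuinely different from the paper's. The paper takes exactly the ``alternative route'' you sketch at the end: it applies the Simson--Tyc long exact sequence to $0 \to \mathbb{Z}/2 \xrightarrow{\cdot 2} \mathbb{Z}/4 \to \mathbb{Z}/2 \to 0$, observes that $L^s_q F_{\mathbb{Z}/4}(\cdot 2) = 0$ since $F$ is quadratic, and obtains short exact sequences $0 \to \mathbb{Z}/2 \to L^s_q F_{\mathbb{Z}/4}(\mathbb{Z}/2) \to L^s_{q-1} F_{\mathbb{Z}/4}(\mathbb{Z}/2) \to 0$. The extension problem you anticipated is resolved by invoking the general $2$-torsion result \cite{simson_connected_1974}~Corollary~8.10 for $L^s_q \Lambda_R$. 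So your geometric argument is more self-contained: it avoids the external $2$-torsion input, computing $E^{\phi C_2}$ directly as a wedge of $H\mathbb{Z}/2$'s.

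One remark on the obstacle you flag. You do not actually need to identify the Dold--Puppe suspension $\sigma^{(n)}_{n+q}$ with the action of $a_\sigma$ to conclude. By Lemma~\ref{lem:f_g_exact} the tower $(L^{(n)}_{n+q} F_{\mathbb{Z}/4}(\mathbb{Z}/2))_n$ is eventually constant, and by the cofibre sequence $(C_2)_+ \to S^0 \xrightarrow{a_\sigma} S^\sigma$ together with $\pi_i(\Sigma^{2n}(H\mathbb{Z}/2 \otimes_{H\mathbb{Z}/4} H\mathbb{Z}/2)) = 0$ for $i < 2n$, the tower $(\pi^{C_2}_{q-n\sigma}(E))_n$ is also eventually constant. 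Since the two towers have the same terms (by Corollary~\ref{cor:F_non_ab}), their colimits agree as groups regardless of whether the structure maps coincide. This removes the main gap you identified and makes the argument complete.
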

\begin{proof}
  Applying \cite{simson_connected_1974} Corollary~5.8 to the short exact
  sequence
  \[0 \to \mathbb{Z}/2 \xrightarrow{2} \mathbb{Z}/4 \to \mathbb{Z}/2 \to 0\]
  gives a long exact sequence
  \[\dotsb \to L^s_qF_{\mathbb{Z}/4}(\mathbb{Z}/2) \to L^s_q(\mathbb{Z}/4) \to
    L^s_q(\mathbb{Z}/2) \to L^s_{q-1}F_{\mathbb{Z}/4}(\mathbb{Z}/2) \to \dotsb \mathrm{.}\]
  As noted in Remark~\ref{rem:ls_f_z4}, $L^s_qF_{\mathbb{Z}/4}(\mathbb{Z}/4) \cong
  \mathbb{Z}/2$ for $q \ge 0$. We claim that the maps
  $L^s_qF_{\mathbb{Z}/4}(\mathbb{Z}/2) \to L^s_qF_{\mathbb{Z}/4}(\mathbb{Z}/4)$ in the sequence, which come
  from applying $L^s_qF_{\mathbb{Z}/4}$ to
  $\mathbb{Z}/2 \xrightarrow{2} \mathbb{Z}/4$, are zero.

  Let us work through how to compute this map from the definitions. The map $\mathbb{Z}/2[n] \xrightarrow{2}
  \mathbb{Z}/4[n]$ corresponds to the map of free resolutions
  \[\begin{tikzcd}
      \cdots \ar[r, "2"] & \mathbb{Z}/4 \ar[d] \ar[r, "2"] & \mathbb{Z}/4 \ar[d]
      \ar[r, "2"] & \mathbb{Z}/4 \ar[r] \ar[d, "2"] & 0 \ar[d] \ar[r] & \cdots\\
      \cdots \ar[r] & 0 \ar[r] & 0 \ar[r] & \mathbb{Z}/4 \ar[r] & 0 \ar[r] & \cdots
  \end{tikzcd}\]
  where the non-zero vertical map is in degree $n$. To compute
  $L^{(n)}_iF_{\mathbb{Z}/4}(\mathbb{Z}/2) \to L^{(n)}_iF_{\mathbb{Z}/4}(\mathbb{Z}/4)$ we apply Dold-Kan to give
  a map of simplicial $\mathbb{Z}/4$-modules, apply $F_{\mathbb{Z}/4}$ level-wise giving a map
  of simplicial abelian groups, then take
  degree $i$ homotopy. Observe that after applying Dold-Kan, the level-wise maps
  are still all multiples of $2$. However applying $F_{\mathbb{Z}/4}$ to a map that is a
  multiple of $2$ gives the zero map (since $F_{\mathbb{Z}/4}(2\alpha) = 4F_{\mathbb{Z}/4}(\alpha) = 0$). So the resulting map of simplicial abelian
  groups is zero on the nose, and hence induces the zero map on homotopy.

  As a result, for $q \ge 0$ our long exact sequence splits up into short exact sequences of
  the form
  \[0 \to \mathbb{Z}/2 \to L^s_qF_{\mathbb{Z}/4}(\mathbb{Z}/2) \to L^s_{q-1}F_{\mathbb{Z}/4}(\mathbb{Z}/2)
    \to 0 \mathrm{.}\]
  By Remark~\ref{rem:stable_nonab_i_less_zero}, $L^s_{-1}F_{\mathbb{Z}/4}(\mathbb{Z}/2) =
  0$. And by \cite{simson_connected_1974} Corollary~8.10, $L^s_qF_{\mathbb{Z}/4}(M) \cong L^s_q
  \Lambda_{\mathbb{Z}/4}(M)$ is always $2$-torsion.
  So we see by induction on $q$ that
  \[L^s_qF_{\mathbb{Z}/4}(\mathbb{Z}/2) \cong (\mathbb{Z}/2)^{q+1} \mathrm{.} \qedhere\]
\end{proof}
\begin{remark}
  We can now compute the value of the stable derived functor
  $L^s_qF_{\mathbb{Z}/4}$ applied to any $\mathbb{Z}/4$-module. Indeed stable
  derived functors are additive, %
  and every $\mathbb{Z}/4$-module is a direct sum of copies of $\mathbb{Z}/4$ and
  $\mathbb{Z}/2$. So it suffices to compute the values of the functor at $\mathbb{Z}/4$ and
  $\mathbb{Z}/2$, which we did in Remark~\ref{rem:ls_f_z4} and
  Proposition~\ref{prop:ls_f_z2}.  %
\end{remark}

So we have computed $L_i^{(n)}F_{\mathbb{Z}/4}(\mathbb{Z}/2)$ in the stable range $i < 2n$. It
remains to see what happens when $i \ge 2n$. To do so we borrow some techniques from
Section~5 of \cite{dotto_geometric_2024}, but adapted to our current context of analysing
$\mathrm{TCR}(\mathbb{Z}/4)^{\phi \mathbb{Z}/2}$, whereas they were analysing
$\mathrm{TCR}(\mathbb{Z})^{\phi \mathbb{Z}/2}$.

\begin{lemma} \label{lem:sigma_Z4_C2_fibre_seq}
  We have a fibre sequence
  \[\begin{tikzcd}[row sep=small]\left( \Sigma^{n \rho + \sigma} H\mathbb{Z}/4
      \otimes_{N^{C_2}_{\{e\}}(H\mathbb{Z}/4)} N^{C_2}_{\{e\}}(H\mathbb{Z}/2)\right)^{C_2}
    \oplus \Sigma^{2n} \mathbb{Z}/2 \ar[d]\\
    (\Sigma^{n\rho}\mathbb{Z}/4)^{C_2} \ar[d]\\
    \left( \Sigma^{n \rho} H\mathbb{Z}/4
      \otimes_{N^{C_2}_{\{e\}}(H\mathbb{Z}/4)} N^{C_2}_{\{e\}}(H\mathbb{Z}/2) \right)^{C_2}
  \end{tikzcd}\]
  where $\sigma$ is the sign representation of $C_2$ (so $\rho = 1 + \sigma$).
\end{lemma}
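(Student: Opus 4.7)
The plan is to compute the fibre $F$ of the map
\[\Sigma^{n\rho} H\mathbb{Z}/4 \to \Sigma^{n\rho} H\mathbb{Z}/4 \otimes_{N^{C_2}_{\{e\}}(H\mathbb{Z}/4)} N^{C_2}_{\{e\}}(H\mathbb{Z}/2)\]
in the category of genuine $C_2$-spectra, where this map arises by applying $\Sigma^{n\rho} H\mathbb{Z}/4 \otimes_{N^{C_2}_{\{e\}}(H\mathbb{Z}/4)} N^{C_2}_{\{e\}}({-})$ to the quotient $q\colon H\mathbb{Z}/4 \to H\mathbb{Z}/2$ (using the identification $H\mathbb{Z}/4 \simeq H\mathbb{Z}/4 \otimes_{N^{C_2}_{\{e\}}(H\mathbb{Z}/4)} N^{C_2}_{\{e\}}(H\mathbb{Z}/4)$). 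Since $({-})^{C_2}$ preserves fibres, $F^{C_2}$ will be the fibre of the map appearing in the lemma, reducing the problem to identifying $F^{C_2}$ with $(\Sigma^{n\rho+\sigma}H\mathbb{Z}/4 \otimes_{N^{C_2}_{\{e\}}(H\mathbb{Z}/4)} N^{C_2}_{\{e\}}(H\mathbb{Z}/2))^{C_2} \oplus \Sigma^{2n}H\mathbb{Z}/2$.

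First I would analyse the fibre $K$ in $C_2$-spectra of $N^{C_2}_{\{e\}}(q)$. The cofibre of $q$ is $\Sigma H\mathbb{Z}/2$, so the Hill--Hopkins--Ravenel-type filtration of the norm of a cofibre sequence produces a cofibre sequence of $C_2$-spectra
\[\Sigma(C_2)_+ \wedge (H\mathbb{Z}/4 \wedge H\mathbb{Z}/2) \to \mathrm{cofib}(N^{C_2}_{\{e\}} q) \to \Sigma^\rho N^{C_2}_{\{e\}}(H\mathbb{Z}/2),\]
in which the left term is the mixed tensor piece and the right term is the norm of the cofibre. Looping, and using $\rho - 1 = \sigma$, gives an analogous cofibre sequence for $K$. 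Tensoring with $\Sigma^{n\rho}H\mathbb{Z}/4$ over $N^{C_2}_{\{e\}}(H\mathbb{Z}/4)$ and applying the projection formula, together with the identification $H\mathbb{Z}/4 \otimes_{N^{C_2}_{\{e\}}(H\mathbb{Z}/4)} (H\mathbb{Z}/4 \wedge H\mathbb{Z}/2) \simeq H\mathbb{Z}/2$, yields a cofibre sequence
\[(C_2)_+ \wedge \Sigma^{2n} H\mathbb{Z}/2 \to F \to \Sigma^{n\rho+\sigma}H\mathbb{Z}/4 \otimes_{N^{C_2}_{\{e\}}(H\mathbb{Z}/4)} N^{C_2}_{\{e\}}(H\mathbb{Z}/2).\]
Taking $C_2$-fixed points, the Wirthm\"uller isomorphism collapses the left-hand term to $\Sigma^{2n} H\mathbb{Z}/2$, producing a cofibre sequence
\[\Sigma^{2n}H\mathbb{Z}/2 \to F^{C_2} \to \big(\Sigma^{n\rho+\sigma}H\mathbb{Z}/4 \otimes_{N^{C_2}_{\{e\}}(H\mathbb{Z}/4)} N^{C_2}_{\{e\}}(H\mathbb{Z}/2)\big)^{C_2}\]
which rotates into the shape of the desired fibre sequence, provided it splits.

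The main obstacle is establishing that this cofibre sequence is a split direct sum rather than a non-trivial extension. I expect this to follow by exhibiting an explicit retraction of $\Sigma^{2n} H\mathbb{Z}/2 \to F^{C_2}$ using the Postnikov structure of $F$: non-equivariantly, $F$ is a sum of suspensions of $H\mathbb{Z}/2$ concentrated in degrees $\ge 2n$, and projecting onto its degree-$2n$ part gives a candidate retraction. The work is then to upgrade this projection to a $C_2$-equivariant map whose restriction along Wirthm\"uller recovers the identity on $\Sigma^{2n} H\mathbb{Z}/2$; this should be arranged using the naturality of the norm filtration applied to a cofibrant model of the quotient $q$, together with the freeness of the induced summand $(C_2)_+ \wedge H\mathbb{Z}/2$.
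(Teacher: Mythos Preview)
Your setup is essentially the paper's: analyse the norm of a cofibre sequence, tensor up over $N^{C_2}_{\{e\}}(H\mathbb{Z}/4)$, and take $C_2$-fixed points.  The difference is that the paper applies the total cofibre square of \cite{dotto_geometric_2024}~Lemma~5.1 to the map $2 : H\mathbb{Z}/2 \to H\mathbb{Z}/4$, whereas you apply the filtration to the quotient $q : H\mathbb{Z}/4 \to H\mathbb{Z}/2$.  These are rotations of the same cofibre sequence, but the choice matters for exactly the step you flag as incomplete.

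With the paper's choice, after applying $H\mathbb{Z}/4 \otimes_{N^{C_2}_{\{e\}}(H\mathbb{Z}/4)}({-})$ the top horizontal map of the square becomes
\[\mathrm{Ind}\big((H\mathbb{Z}/2) \otimes_{H\mathbb{Z}/4} 2\big) : \mathrm{Ind}(H\mathbb{Z}/2 \otimes_{H\mathbb{Z}/4} H\mathbb{Z}/2) \to \mathrm{Ind}(H\mathbb{Z}/2),\]
which is strictly zero since multiplication by $2$ on $H\mathbb{Z}/2$ vanishes.  A zero edge forces the pushout to split as a direct sum, so the fibre of $H\mathbb{Z}/4 \to H\mathbb{Z}/4 \otimes_{N} N(H\mathbb{Z}/2)$ is already identified, \emph{as a $C_2$-spectrum}, with $\Sigma^{\sigma}H\mathbb{Z}/4 \otimes_{N} N(H\mathbb{Z}/2) \,\oplus\, \mathrm{Ind}(H\mathbb{Z}/2)$; no further splitting argument is needed.

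Your route produces the same two pieces but only as the ends of a cofibre sequence, and you then have to argue it splits.  The Postnikov-type argument you sketch is not convincing as stated: you would need an equivariant retraction $F \to (C_2)_+ \wedge \Sigma^{2n}H\mathbb{Z}/2$ (or at least a retraction on fixed points), and ``upgrade the non-equivariant degree-$2n$ projection to a $C_2$-map using freeness of the induced summand'' is not an argument, it is a hope.  The splitting is in fact true equivariantly, but the clean way to see it is exactly the vanishing above.  I would rewrite the proof using $2 : H\mathbb{Z}/2 \to H\mathbb{Z}/4$ rather than $q$; everything else in your outline then goes through without the last paragraph.
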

\begin{proof}
  First apply Lemma~5.1 of \cite{dotto_geometric_2024} to the map $\mathbb{Z}/2
  \xrightarrow{2} \mathbb{Z}/4$. This map has cofibre $\mathbb{Z}/2$, so we get
  a square of $N^{C_2}_{\{e\}}(H\mathbb{Z}/4)$-modules
  \[\begin{tikzcd}[column sep=6em]
      \mathrm{Ind}^{C_2}_{\{e\}}(H\mathbb{Z}/2 \otimes_{\mathbb{S}} H\mathbb{Z}/2)
      \ar[r, "\mathrm{Ind}((H\mathbb{Z}/2) \otimes 2)"] \ar[d,
      "\widetilde{\mathrm{id}}"] & \mathrm{Ind}^{C_2}_{\{e\}}(H\mathbb{Z}/2
      \otimes_{\mathbb{S}} H\mathbb{Z}/4) \ar[d, "\widetilde{2 \otimes
        (H\mathbb{Z}/4)}"]\\
      N^{C_2}_{\{e\}}(H\mathbb{Z}/2) \ar[r, "{N(2)}" swap] & N^{C_2}_{\{e\}}(H\mathbb{Z}/4)
    \end{tikzcd}\]
  with total cofibre $N^{C_2}_{\{e\}}(H\mathbb{Z}/2)$. Here
  $\mathrm{Ind}^{C_2}_{\{e\}}$ is left adjoint to the forgetful functor from
  $N^{C_2}_{\{e\}}(H\mathbb{Z}/4)$-modules in $C_2$-spectra to $H\mathbb{Z}/4
  \otimes_{\mathbb{S}} H\mathbb{Z}/4$-modules in spectra, and the vertical maps
  are adjoint to the identity of $H\mathbb{Z}/4
  \otimes_{\mathbb{S}} H\mathbb{Z}/4$ and the map $2 \otimes (H\mathbb{Z}/4)$ respectively.

  Following the approach of \cite{dotto_geometric_2024} Lemma~5.2, we apply the
  functor
  \[H\mathbb{Z}/4 \otimes_{N^{C_2}_{\{e\}}(H\mathbb{Z}/4)} ({-})\mathrm{,}\]
  and note that this
  preserves the total cofibre. Simplifying the result somewhat, we get the square
  \[\begin{tikzcd}[column sep=5em]
      \mathrm{Ind}^{C_2}_{\{e\}}(H\mathbb{Z}/2 \otimes_{H\mathbb{Z}/4} H\mathbb{Z}/2)
      \ar[r, "\mathrm{Ind}((H\mathbb{Z}/2) \otimes 2)"] \ar[d,
      "\widetilde{\mathrm{id}}"] & \mathrm{Ind}^{C_2}_{\{e\}}(H\mathbb{Z}/2
      \otimes_{H\mathbb{Z}/4} H\mathbb{Z}/4) \ar[d, "\widetilde{2 \otimes
        (H\mathbb{Z}/4)}"]\\
      H\mathbb{Z}/4 \otimes_{N^{C_2}_{\{e\}}(H\mathbb{Z}/4)}
      N^{C_2}_{\{e\}}(H\mathbb{Z}/2) \ar[r, "{H\mathbb{Z}/4
        \otimes N(2)}" swap] & H\mathbb{Z}/4
      \otimes_{N^{C_2}_{\{e\}}(H\mathbb{Z}/4)} N^{C_2}_{\{e\}}(H\mathbb{Z}/4)
    \end{tikzcd}\]
  with total cofibre $H\mathbb{Z}/4 \otimes_{N^{C_2}_{\{e\}}(H\mathbb{Z}/4)} N^{C_2}_{\{e\}}(H\mathbb{Z}/2)$.

  The top horizontal map is zero, since $(H\mathbb{Z}/2) \otimes 2 : H\mathbb{Z}/2
  \otimes_{H \mathbb{Z}/4} H\mathbb{Z}/2 \to H\mathbb{Z}/2 \otimes_{H
    \mathbb{Z}/4} H\mathbb{Z}/4$ is evidently the zero map. The top
  right spectrum simplifies to $\mathrm{Ind}^{C_2}_{\{e\}}(H\mathbb{Z}/2)$, and
  the bottom right spectrum simplifies to $H\mathbb{Z}/4$. The left vertical
  map is given by the fold map
  \[H\mathbb{Z}/2 \otimes_{H\mathbb{Z}/4} H\mathbb{Z}/2 \oplus H\mathbb{Z}/2
    \otimes_{H\mathbb{Z}/4} H\mathbb{Z}/2 \to H\mathbb{Z}/2 \otimes_{H\mathbb{Z}/4} H\mathbb{Z}/2\]
  on underlying spectra, and has cofibre
  \[\Sigma^{\sigma} H\mathbb{Z}/4 \otimes_{N^{C_2}_{\{e\}}(H\mathbb{Z}/4)}
    N^{C_2}_{\{e\}}(H\mathbb{Z}/2)\]
  (in general the cofibre sequence
      $(C_2)_{+} \to \mathbb{S}^0 \to \mathbb{S}^{\sigma}$ gives us a sequence
      $\text{Ind}^{C_2}_{\{e\}}X \xrightarrow{\widetilde{\text{id}}} X \to \Sigma^{\sigma}X$). Hence we find the pushout of the top horizontal and
      left vertical maps is
      \[\left( \Sigma^{\sigma} H\mathbb{Z}/4 \otimes_{N^{C_2}_{\{e\}}(H\mathbb{Z}/4)}
      N^{C_2}_{\{e\}}(H\mathbb{Z}/2) \right) \oplus
      \mathrm{Ind}^{C_2}_{\{e\}}(H\mathbb{Z}/2) \mathrm{.}\]
  So the (co)fibre sequence defining the total cofibre is
  \[\begin{tikzcd}[row sep=small]
\Sigma^{\sigma} H\mathbb{Z}/4 \otimes_{N^{C_2}_{\{e\}}(H\mathbb{Z}/4)}
      N^{C_2}_{\{e\}}(H\mathbb{Z}/2) \oplus
      \mathrm{Ind}^{C_2}_{\{e\}}(H\mathbb{Z}/2) \ar[d]\\
      H\mathbb{Z}/4 \ar[d]\\
      H\mathbb{Z}/4 \otimes_{N^{C_2}_{\{e\}}(H\mathbb{Z}/4)}
      N^{C_2}_{\{e\}}(H\mathbb{Z}/2) \mathrm{.}
    \end{tikzcd}\]
  Taking the $\Sigma^{n\rho}$ suspension followed by genuine $C_2$-fixed points
  gives the desired fibre sequence.
\end{proof}
\begin{corollary}\label{cor:n_nplus_exact}
  We have a long exact sequence
  \begin{align*}\dotsb \to &L_{i+1}^{(n+1)}F_{\mathbb{Z}/4}(\mathbb{Z}/2) \oplus \pi_i(\Sigma^{2n} H\mathbb{Z}/2) \to
    \pi^{C_2}_i(\Sigma^{n\rho} H\mathbb{Z}/4) \to L_i^{(n)}F_{\mathbb{Z}/4}(\mathbb{Z}/2) \to\\
    &L_{i}^{(n+1)}F_{\mathbb{Z}/4}(\mathbb{Z}/2) \oplus \pi_{i-1}(\Sigma^{2n}H\mathbb{Z}/2) \to \pi_{i-1}^{C_2}(\Sigma^{n\rho}H\mathbb{Z}/4) \to L_{i-1}^{(n)}F_{\mathbb{Z}/4}(\mathbb{Z}/2) \to
    \dotsb \mathrm{.}\end{align*}
\end{corollary}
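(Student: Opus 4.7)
The plan is straightforward: apply the long exact sequence in $C_2$-equivariant homotopy groups to the fibre sequence supplied by Lemma~\ref{lem:sigma_Z4_C2_fibre_seq}, and identify each of the three terms with the expressions appearing in the statement.

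The middle term $\pi_i^{C_2}(\Sigma^{n\rho}H\mathbb{Z}/4)$ requires no further identification. The bottom term is handled immediately by Corollary~\ref{cor:F_non_ab}, which yields
\[\pi_i^{C_2}\Big(\Sigma^{n\rho}H\mathbb{Z}/4 \otimes_{N^{C_2}_{\{e\}}(H\mathbb{Z}/4)} N^{C_2}_{\{e\}}(H\mathbb{Z}/2)\Big) \cong L_i^{(n)}F_{\mathbb{Z}/4}(\mathbb{Z}/2).\]

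For the top term, I would treat the two summands independently, using that $\pi_i^{C_2}$ commutes with finite direct sums. The summand $\Sigma^{2n}H\mathbb{Z}/2$ contributes $\pi_i(\Sigma^{2n}H\mathbb{Z}/2)$ directly. For the other summand I would invoke the identity $n\rho + \sigma = (n+1)\rho - 1$ of virtual $C_2$-representations to rewrite it, up to a shift by $-1$, as $\Sigma^{(n+1)\rho}H\mathbb{Z}/4 \otimes_{N^{C_2}_{\{e\}}(H\mathbb{Z}/4)} N^{C_2}_{\{e\}}(H\mathbb{Z}/2)$. Taking $\pi_i^{C_2}$ of the shifted spectrum corresponds to taking $\pi_{i+1}^{C_2}$ of the unshifted one, which by another application of Corollary~\ref{cor:F_non_ab} is $L_{i+1}^{(n+1)}F_{\mathbb{Z}/4}(\mathbb{Z}/2)$.

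No real obstacles arise here: once Lemma~\ref{lem:sigma_Z4_C2_fibre_seq} is in hand, the corollary reduces to the bookkeeping of writing out the associated long exact sequence of $C_2$-fixed point homotopy groups together with the identifications above.
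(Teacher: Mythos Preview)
Your proposal is correct and matches the paper's proof almost exactly: both take the long exact sequence of the fibre sequence from Lemma~\ref{lem:sigma_Z4_C2_fibre_seq}, identify the third term via Corollary~\ref{cor:F_non_ab}, and use the representation identity $n\rho+\sigma+1=(n+1)\rho$ to shift the first summand of the first term into the form needed for another application of Corollary~\ref{cor:F_non_ab}. One small point of phrasing: the fibre sequence in Lemma~\ref{lem:sigma_Z4_C2_fibre_seq} is already a sequence of ordinary spectra (the $C_2$-fixed points have been taken), so strictly speaking you apply the long exact sequence in ordinary $\pi_i$, not in $\pi_i^{C_2}$---but since $\pi_i^{C_2}(X)=\pi_i(X^{C_2})$ this is only a cosmetic difference and your identifications go through unchanged.
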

\begin{proof}
  Consider the long exact sequence in homotopy for the fibre sequence of Lemma~\ref{lem:sigma_Z4_C2_fibre_seq}.
  It remains to observe that for a $C_2$-spectrum $X$,
  \[\pi^{C_2}_i(\Sigma^{n\rho + \sigma}X) \cong \pi^{C_2}_{i+1}(\Sigma
    \Sigma^{n\rho + \sigma} X) \cong \pi^{C_2}_{i+1}(\Sigma^{(n+1)\rho}X)\]
  so
  \begin{align*}
    \pi^{C_2}_i\big(\Sigma^{n \rho + \sigma} H\mathbb{Z}/4 \otimes_{N^{C_2}_{\{e\}}(H\mathbb{Z}/4)} N^{C_2}_{\{e\}}(H\mathbb{Z}/2)\big) \cong L_{i+1}^{(n+1)}F_{\mathbb{Z}/4}(\mathbb{Z}/2) \mathrm{.}
  \end{align*}
  Hence identifying terms in the long exact sequence as appropriate
  non-abelian derived functors gives the result.
\end{proof}

We can now use this long exact sequence, together with our earlier results, to
almost completely compute $L_i^{(n)}F_{\mathbb{Z}/4}(\mathbb{Z}/2)$
(this appeared as Theorem~\ref{thm:intro_thm_nadf} in the introduction).

\begin{theorem}\label{thm:l_f_z2}
  For $i \ge 0$ fixed, $L_i^{(n)}F_{\mathbb{Z}/4}(\mathbb{Z}/2)$ takes the same value
  for all $0 \le n \le \floor{i/2}$. Moreover we determine this constant value, up to two possibilities in some cases:
  \begin{align*}L_i^{(0)} F_{\mathbb{Z}/4}(\mathbb{Z}/2) &\cong L_i^{(1)}
    F_{\mathbb{Z}/4}(\mathbb{Z}/2) \cong \dotsb \cong L_i^{\floor{i/2}}
    F_{\mathbb{Z}/4}(\mathbb{Z}/2)\\
    &\cong \begin{cases}
      (\mathbb{Z}/2)^{\floor{i/2}+1} \text{ or } (\mathbb{Z}/2)^{\floor{i/2}}
      \oplus \mathbb{Z}/4 \quad &\text{if $i \equiv 0 \text{ mod } 4$,}\\
      (\mathbb{Z}/2)^{\floor{i/2}+1} \text{ or } (\mathbb{Z}/2)^{\floor{i/2}+2} \quad &\text{if $i \equiv 1 \text{ mod } 4$,}\\
      (\mathbb{Z}/2)^{\floor{i/2}+1} \quad &\text{if $i \equiv 2 \text{ mod } 4$,}\\
      (\mathbb{Z}/2)^{\floor{i/2}+2} \quad &\text{if $i \equiv 3 \text{ mod } 4$.}
      \end{cases}\end{align*}
    For each $k$ we have
    $\abs{L_{4k}^{(0)}F_{\mathbb{Z}/4}(\mathbb{Z}/2)} =
    \abs{L_{4k+1}^{(0)}F_{\mathbb{Z}/4}(\mathbb{Z}/2)}$. That is, if we know
    the value for some $i$ congruent to $0$ mod $4$ then this tells us the value
    for $i+1$ and vice versa.

  For $\floor{i/2} < n \le i$ we have
  \[L_i^{(n)}F_{\mathbb{Z}/4}(\mathbb{Z}/2) \cong (\mathbb{Z}/2)^{i-n+1} \text{.}\]
  And 
  $L_i^{(n)}F_{\mathbb{Z}/4}(\mathbb{Z}/2) = 0$ for $n > i$ (recall from
  Remark~\ref{rem:nonab_i_less_n} that this is a general fact about non-abelian
  derived functors).
\end{theorem}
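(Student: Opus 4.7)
I would split the statement into three ranges. For $n > i$, the vanishing is immediate from Remark~\ref{rem:nonab_i_less_n}. For the stable range $\floor{i/2} < n \le i$, the hypothesis $i < 2n$ combined with Lemma~\ref{lem:l_g_comp} gives $L_i^{(n)}G_{\mathbb{Z}/4}(\mathbb{Z}/2) = 0$; the long exact sequence of Lemma~\ref{lem:f_g_exact} then supplies a suspension isomorphism $L_i^{(n)}F \cong L_{i+1}^{(n+1)}F$, and iterating identifies the left-hand side with $L^s_{i-n} F_{\mathbb{Z}/4}(\mathbb{Z}/2) \cong (\mathbb{Z}/2)^{i-n+1}$ by Proposition~\ref{prop:ls_f_z2}.

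\textbf{Constancy in the unstable range.} For $1 \le n \le \floor{i/2}$, I would apply Corollary~\ref{cor:n_nplus_exact} with index $n-1$ in place of $n$. The assumption $n \le \floor{i/2}$ forces $i - 1 > 2(n-1)$, which via Lemma~\ref{lem:derived_f_z4} makes the Bredon terms $\pi_j^{C_2}(\Sigma^{(n-1)\rho}H\mathbb{Z}/4)$ vanish for $j \in \{i,i-1\}$, and the sphere terms $\pi_j(\Sigma^{2(n-1)}H\mathbb{Z}/2)$ vanish there too since $i, i-1 \neq 2(n-1)$. The long exact sequence collapses to an isomorphism $L_i^{(n-1)}F \cong L_i^{(n)}F$, establishing the common value which I denote $E_i := L_i^{(\floor{i/2})}F$.

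\textbf{Determining the value.} For the actual value of $E_i$ I would specialize Corollary~\ref{cor:n_nplus_exact} to $n = k := \floor{i/2}$, which pairs $E_{2k}$ and $E_{2k+1}$ simultaneously (since $\floor{2k/2} = \floor{(2k+1)/2} = k$). Every $L_j^{(k)}F$ or $L_j^{(k+1)}F$ appearing is either a known stable value or a higher $E_j$; above degree $2k+1$ the LES decouples into suspension isomorphisms $E_j \xrightarrow{\cong} E_j$ from the previous step. What remains is the finite exact sequence
\[0 \to E_{2k+1} \to (\mathbb{Z}/2)^{k+2} \xrightarrow{\alpha} B_k \to E_{2k} \to (\mathbb{Z}/2)^k \to \mathbb{Z}/2 \to \dotsb \to 0,\]
where $B_k = \pi_{2k}^{C_2}(\Sigma^{k\rho}H\mathbb{Z}/4)$ is $\mathbb{Z}/4$ for $k$ even and $\mathbb{Z}/2$ for $k$ odd by Lemma~\ref{lem:derived_f_z4}, the term $(\mathbb{Z}/2)^{k+2}$ combines the stable value $(\mathbb{Z}/2)^{k+1}$ with the sphere contribution $\pi_{2k}(\Sigma^{2k}H\mathbb{Z}/2) = \mathbb{Z}/2$, and the tail is a telescoping pattern of triples $((\mathbb{Z}/2)^{j-k+1}, \mathbb{Z}/2, (\mathbb{Z}/2)^{j-k+1})$ for $k \le j \le 2k-1$. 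Taking $\prod|T_j|^{(-1)^j}=1$ over this finite sequence collapses the tail pairs (each contributing $-2$ in log) to give $\log_2 |E_{2k}| - \log_2 |E_{2k+1}| = \log_2 |B_k| - 2$, so $|E_{4m}| = |E_{4m+1}|$ for $k$ even and $|E_{4m+3}| = 2|E_{4m+2}|$ for $k$ odd. Since $\alpha$ maps an elementary abelian group into $B_k$, its image is $2$-torsion, restricting $|\mathrm{im}(\alpha)|$ to $\{1,2\}$; combining this with the $\mathbb{Z}/4$-module structure inherited from the ring $\pi_0(\mathrm{TCR}(\mathbb{Z}/4)^{\phi\mathbb{Z}/2})$ narrows the answer to the two stated group-theoretic possibilities.

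\textbf{Main obstacle.} The central difficulty is that for $i \equiv 0, 1 \pmod 4$ the Bredon term $B_k = \mathbb{Z}/4$ introduces a genuine extension problem: whether $\alpha = 0$ (so the full $\mathbb{Z}/4$ survives into $E_{2k}$ as a summand) or $\alpha$ surjects onto the $2$-torsion (so only $\mathbb{Z}/2$-summands appear) cannot be decided from the exact sequence alone. This is the origin of the two-possibility ambiguity the theorem acknowledges. For $i \equiv 2,3 \pmod 4$ the Bredon term is already $\mathbb{Z}/2$, every group in the sequence is $2$-torsion, and a direct analysis of the boundary maps in the fibre sequence of Lemma~\ref{lem:sigma_Z4_C2_fibre_seq} --- particularly identifying the map $L_{2k-1}^{(k)}F \to L_{2k-1}^{(k+1)}F$ between the two stable values --- forces the uniquely determined answer.
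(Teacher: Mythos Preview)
Your plan for the stable range $\floor{i/2} < n \le i$ and for the constancy on $0 \le n \le \floor{i/2}$ is correct and essentially matches the paper's argument (the paper phrases the constancy as $L_i^{(n)}F \cong L_i^{(n+1)}F$ for $i > 2n+1$, which is your statement with a shifted index). Your alternating-product computation on the full long exact sequence of Corollary~\ref{cor:n_nplus_exact} is a nice alternative to the paper's explicit truncation and correctly yields the order relations $|E_{4m}| = |E_{4m+1}|$ and $|E_{4m+3}| = 2|E_{4m+2}|$.

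However, there is a genuine gap in how you pin down the \emph{group structure} of $E_{2k}$. Your argument from Corollary~\ref{cor:n_nplus_exact} alone gives only that $E_{2k+1}$ is elementary abelian of rank $k+1$ or $k+2$, and hence $|E_{2k}| \in \{2^k, 2^{k+1}\}$ (for $k$ odd) or $\{2^{k+1}, 2^{k+2}\}$ (for $k$ even). That is not enough: for $k$ odd you have not excluded $|E_{2k}| = 2^k$, and for $k$ even the $4$-torsion constraint still allows many isomorphism types (e.g.\ $(\mathbb{Z}/2)^{k-3}\oplus(\mathbb{Z}/4)^2$). Your appeal to ``the $\mathbb{Z}/4$-module structure inherited from $\pi_0(\mathrm{TCR}(\mathbb{Z}/4)^{\phi\mathbb{Z}/2})$'' is both misattributed (the relevant structure is that these groups are homotopy of an $H\mathbb{Z}/4$-module, not a $\mathrm{TCR}$-module) and too weak (it gives only $4$-torsion). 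The paper closes this gap by bringing in the \emph{other} long exact sequence, that of Lemma~\ref{lem:f_g_exact}, which yields
\[
\mathbb{Z}/2 \to L_{2n}^{(n)}F_{\mathbb{Z}/4}(\mathbb{Z}/2) \to (\mathbb{Z}/2)^{n+1} \to 0.
\]
This simultaneously supplies the lower bound $|E_{2n}| \ge 2^{n+1}$ and restricts $E_{2n}$ to the three candidates $(\mathbb{Z}/2)^{n+1}$, $(\mathbb{Z}/2)^{n+2}$, $(\mathbb{Z}/2)^n \oplus \mathbb{Z}/4$. Combined with the order constraint from Corollary~\ref{cor:n_nplus_exact}, this forces the unique answer for $n$ odd and leaves exactly the two stated options for $n$ even. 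You also need the paper's observation that the map $L_{2n}^{(n+1)}F \to \pi_{2n-1}^{C_2}(\Sigma^{n\rho}H\mathbb{Z}/4) \cong \mathbb{Z}/2$ is zero (proved by a kernel-size count on the next map $(\mathbb{Z}/2)^n \to (\mathbb{Z}/2)^{n-1}$), which truncates the Corollary~\ref{cor:n_nplus_exact} sequence to five terms; your handwave about ``identifying the map $L_{2k-1}^{(k)}F \to L_{2k-1}^{(k+1)}F$'' does not address this.
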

\begin{proof}
  The claim for $\floor{i/2} < n \le i$ follows from Proposition~\ref{prop:ls_f_z2} (see also
  Lemma~\ref{lem:f_g_exact}), since for $n \le i < 2n$ we have
  \[L_i^{(n)}F_{\mathbb{Z}/4}(\mathbb{Z}/2) \cong L^s_{i-n}F_{\mathbb{Z}/4}(\mathbb{Z}/2) \cong
    (\mathbb{Z}/2)^{i-n+1} \mathrm{.}\]

  The claim that $L_i^{(n)}F_{\mathbb{Z}/4}(\mathbb{Z}/2)$ takes a constant
  value for $0 \le n \le \floor{i/2}$ follows from the long exact sequence of
  Corollary~\ref{cor:n_nplus_exact}, since in Lemma~\ref{lem:derived_f_z4} we showed
  $\pi_i^{C_2}(\Sigma^{n\rho}H\mathbb{Z}/4) = 0$ for $i > 2n$, so for $i > 2n+1$
  we get exact sequences
  \[0 \to L_i^{(n)}F_{\mathbb{Z}/4}(\mathbb{Z}/2) \to L_i^{(n+1)}F_{\mathbb{Z}/4}(\mathbb{Z}/2) \to 0 \mathrm{.}\]

  To try to evaluate this constant value, we now analyse what happens near the boundary case $i = 2n$.

  Consider the exact sequence of Lemma~\ref{lem:f_g_exact} and
  Remark~\ref{rem:l_f_z2_exact1}, noting that
  \[L^{(n+1)}_{2n+1}F_{\mathbb{Z}/4}(\mathbb{Z}/2) \cong (\mathbb{Z}/2)^{n+1}
    \text{,}\]
  so we get an exact sequence
  \[\mathbb{Z}/2 \to L^{(n)}_{2n}F_{\mathbb{Z}/4}(\mathbb{Z}/2) \to
    (\mathbb{Z}/2)^{n+1} \to 0 \mathrm{.}\]
  This shows that
  \begin{equation}\label{eq:l_n_2n_prelim_options} L^{(n)}_{2n}F_{\mathbb{Z}/4}(\mathbb{Z}/2) \in \{(\mathbb{Z}/2)^{n+1},
    (\mathbb{Z}/2)^{n+2}, (\mathbb{Z}/2)^{n} \oplus \mathbb{Z}/4\} \mathrm{.}\end{equation}

  Next consider the following part of the sequence from
  Corollary~\ref{cor:n_nplus_exact}:
  \begin{equation} \label{eq:exact_seq_section_1} \begin{aligned} &L^{(n+1)}_{2n} F_{\mathbb{Z}/4}(\mathbb{Z}/2) \oplus
    \pi_{2n-1}(\Sigma^{2n}H\mathbb{Z}/2) \to \pi_{2n-1}^{C_2}(\Sigma^{n\rho}H\mathbb{Z}/4) \to L_{2n-1}^{(n)}F_{\mathbb{Z}/4}(\mathbb{Z}/2)\\ \to
    &L_{2n-1}^{(n+1)}F_{\mathbb{Z}/4}(\mathbb{Z}/2) \oplus \pi_{2n-2}(\Sigma^{2n}H\mathbb{Z}/2) \mathrm{.}\end{aligned} \end{equation}
  By Lemma~\ref{lem:derived_f_z4} we have $\pi_i^{C_2}(\Sigma^{n\rho}H\mathbb{Z}/4) \cong
  \mathbb{Z}/2$ for $n \le i < 2n$, so
  $\pi_{2n-1}^{C_2}(\Sigma^{n\rho}H\mathbb{Z}/4) \cong \mathbb{Z}/2$. Also
  substituting in $L_i^{(n)}F_{\mathbb{Z}/4}(\mathbb{Z}/2) \cong
  (\mathbb{Z}/2)^{i-n+1}$ for $n \le i < 2n$ and
  $\pi_{i}(\Sigma^{2n}H\mathbb{Z}/4) = 0$ for $i \ne 2n$, we get
  \[L^{(n+1)}_{2n} F_{\mathbb{Z}/4}(\mathbb{Z}/2) \to \mathbb{Z}/2 \to (\mathbb{Z}/2)^{n} \to
    (\mathbb{Z}/2)^{n-1} \text{.}\]
  But any map $(\mathbb{Z}/2)^{n} \to (\mathbb{Z}/2)^{n-1}$ has kernel of order
  at least $2$, so the map $\mathbb{Z}/2 \to (\mathbb{Z}/2)^{n}$ must be an
  injection, and hence the map $L^{(n+1)}_{2n} F_{\mathbb{Z}/4}(\mathbb{Z}/2) \to
  \mathbb{Z}/2$ is zero.

  Now we look at the part of the exact sequence of
  Corollary~\ref{cor:n_nplus_exact} leading up to the section in
  (\ref{eq:exact_seq_section_1}), and split the argument
  according to the parity of $n$. Let $n=2k+1$ be odd, then we get
  \begin{align*}0 \to L_{4k+3}^{(2k+1)}F_{\mathbb{Z}/4}(\mathbb{Z}/2) \to &L_{4k+3}^{(2k+2)}F_{\mathbb{Z}/4}(\mathbb{Z}/2)
    \oplus \mathbb{Z}/2 \to \mathbb{Z}/2 \to\\ &L_{4k+2}^{(2k+1)}F_{\mathbb{Z}/4}(\mathbb{Z}/2)
    \to L_{4k+2}^{(2k+2)}F_{\mathbb{Z}/4}(\mathbb{Z}/2) \to 0\end{align*}
  using that $\pi_i^{C_2}(\Sigma^{(2k+1)\rho}H\mathbb{Z}/4)$ is zero for $i >
  4k+2$ and isomorphic to $\mathbb{Z}/2$ at $i = 4k+2$ (by Lemma~\ref{lem:derived_f_z4} ), and as we just observed the map into that group
  for $i = 4k+1 = 2n-1$ is zero. Filling in the values of $L^{(n)}_iF_{\mathbb{Z}/4}(\mathbb{Z}/2)$
  that we have already computed gives
  \[0 \to L_{4k+3}^{(2k+1)}F_{\mathbb{Z}/4}(\mathbb{Z}/2) \to (\mathbb{Z}/2)^{2k+3}
    \to \mathbb{Z}/2 \to L_{4k+2}^{(2k+1)}F_{\mathbb{Z}/4}(\mathbb{Z}/2)
    \to (\mathbb{Z}/2)^{2k+1} \to 0 \mathrm{.}\]
  This shows $L^{(2k+1)}_{4k+2}F_{\mathbb{Z}/4}(\mathbb{Z}/2)$ has order at most $2^{2k+2}$, so
  considering the list of possibilities in (\ref{eq:l_n_2n_prelim_options}) we
  must have
  \[L^{(2k+1)}_{4k+2}F_{\mathbb{Z}/4}(\mathbb{Z}/2) \cong (\mathbb{Z}/2)^{2k+2} \mathrm{.}\]
  Then the rest of the exact sequence shows
  \[L^{(2k+1)}_{4k+3}F_{\mathbb{Z}/4}(\mathbb{Z}/2) \cong (\mathbb{Z}/2)^{2k+3} \mathrm{.}\]

  Finally consider the case where $n = 2k$ is even. We obtain a very similar exact sequence with the
  difference that now $\pi^{C_2}_{4k}(\Sigma^{2k\rho}H\mathbb{Z}/4) \cong
  \mathbb{Z}/4$, so we get
  \[0 \to L_{4k+1}^{(2k)}F_{\mathbb{Z}/4}(\mathbb{Z}/2) \to (\mathbb{Z}/2)^{2k+2}
    \to \mathbb{Z}/4 \to L_{4k}^{(2k)}F_{\mathbb{Z}/4}(\mathbb{Z}/2)
    \to (\mathbb{Z}/2)^{2k} \to 0\]
  There is no exact sequence $\mathbb{Z}/4 \to (\mathbb{Z}/2)^{2k+2} \to
  (\mathbb{Z}/2)^{2k} \to 0$, so considering our list of possibilities we must have
  \[L^{(2k)}_{4k}F_{\mathbb{Z}/4}(\mathbb{Z}/2) \in \{(\mathbb{Z}/2)^{2k+1},
    (\mathbb{Z}/2)^{2k} \oplus \mathbb{Z}/4\} \mathrm{.}\]
  If $L^{(2k)}_{4k}F_{\mathbb{Z}/4}(\mathbb{Z}/2) \cong (\mathbb{Z}/2)^{2k+1}$ then the map
  from $\mathbb{Z}/4$ has kernel $\mathbb{Z}/2$, so we find
  \[L^{(2k)}_{4k+1}F_{\mathbb{Z}/4}(\mathbb{Z}/2) \cong (\mathbb{Z}/2)^{2k+1} \mathrm{.}\]
  Otherwise $L^{(2k)}_{4k}F_{\mathbb{Z}/4}(\mathbb{Z}/2) \cong (\mathbb{Z}/2)^{2k}\oplus
  \mathbb{Z}/4$; in this case the map from $\mathbb{Z}/4$ is injective and we find
  \[L^{(2k)}_{4k+1}F_{\mathbb{Z}/4}(\mathbb{Z}/2) \cong (\mathbb{Z}/2)^{2k+2}
    \mathrm{.} \qedhere\]
\end{proof}

\begin{remark}
  Note this result agrees with the output of our computer program, which
  we have used to compute $L_i^{(n)}F_{\mathbb{Z}/4}$ for $i, n \le 6$. In the
  cases where the above theorem does not give a definite answer, the
  program gives us
  \begin{alignat*}{2} L_0^{(0)}F_{\mathbb{Z}/4}(\mathbb{Z}/2) &\cong \mathbb{Z}/4, \quad\quad
    &&L_1^{(0)}F_{\mathbb{Z}/4}(\mathbb{Z}/2) \cong (\mathbb{Z}/2)^2\text{,} \\
  L_4^{(0)}F_{\mathbb{Z}/4}(\mathbb{Z}/2) &\cong (\mathbb{Z}/2)^2 \oplus \mathbb{Z}/4, \quad\quad
    &&L_5^{(0)}F_{\mathbb{Z}/4}(\mathbb{Z}/2) \cong (\mathbb{Z}/2)^4 \text{.}\end{alignat*}
\end{remark}

\subsection{Application to size estimates}\label{sec:size_est}

Combining Theorem~\ref{thm:l_f_z2} with Lemma~\ref{lem:thr_phi2_c2_order_prelim} gives
us lower and upper bounds for $\abs{\pi_i((\mathrm{THR}(\mathbb{Z}/4)^{\phi \mathbb{Z}/2})^{C_2})}$.

\begin{theorem}\label{thm:thr_phi2_c2_order}
  We have bounds
  \[2^{(i+1)(5i+7)/8} \le \abs{\pi_i((\mathrm{THR}(\mathbb{Z}/4)^{\phi
          \mathbb{Z}/2})^{C_2})} \le 2^{(i+2)(5i+8)/8} \text{.}\]
\end{theorem}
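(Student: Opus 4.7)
The plan is to combine the explicit formula
\[\abs{\pi_i((\mathrm{THR}(\mathbb{Z}/4)^{\phi \mathbb{Z}/2})^{C_2})} = 2^{\floor{(i+1)^2/4}} \cdot \prod_{0 \le n \le i}\abs{L_i^{(n)}F_{\mathbb{Z}/4}(\mathbb{Z}/2)}\]
provided by Lemma~\ref{lem:thr_phi2_c2_order_prelim} with the almost-complete computation of $L_i^{(n)}F_{\mathbb{Z}/4}(\mathbb{Z}/2)$ given by Theorem~\ref{thm:l_f_z2}. All that remains is to bound each factor individually and add up exponents.

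First I would split the product into two pieces matching the two regimes of Theorem~\ref{thm:l_f_z2}. For $\floor{i/2} < n \le i$ the factor $\abs{L_i^{(n)}F_{\mathbb{Z}/4}(\mathbb{Z}/2)} = 2^{i-n+1}$ is known exactly, so this portion of the product contributes a fixed exponent of
\[\sum_{n=\floor{i/2}+1}^{i}(i-n+1) = \frac{(i-\floor{i/2})(i-\floor{i/2}+1)}{2}.\]
For $0 \le n \le \floor{i/2}$, Theorem~\ref{thm:l_f_z2} supplies the uniform bounds $2^{\floor{i/2}+1} \le \abs{L_i^{(n)}F_{\mathbb{Z}/4}(\mathbb{Z}/2)} \le 2^{\floor{i/2}+2}$, so this portion of the product lies between $2^{(\floor{i/2}+1)^2}$ and $2^{(\floor{i/2}+1)(\floor{i/2}+2)}$.

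Adding all contributions, including the off-diagonal exponent $\floor{(i+1)^2/4}$, and simplifying in each of the cases $i=2k$ and $i=2k+1$ separately, yields total exponents that sit between $(i+1)(5i+7)/8$ from below and $(i+2)(5i+8)/8$ from above. The constants in the stated bounds are chosen so as to accommodate both parities of $i$ by a single closed-form expression; in fact, the arithmetic produces slightly tighter bounds in some parities (the upper bound matches exactly for $i$ even, and the lower bound matches exactly for $i$ odd).

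There is no real conceptual obstacle at this stage: all the heavy lifting has already been done in Lemma~\ref{lem:thr_phi2_c2_order_prelim} and Theorem~\ref{thm:l_f_z2}. The only mildly awkward step is the parity case split at the end, forced by the floor functions appearing in both the off-diagonal count and the indexing range of the ``unstable'' non-abelian derived functor values.
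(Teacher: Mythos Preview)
Your proposal is correct and follows essentially the same approach as the paper: apply Lemma~\ref{lem:thr_phi2_c2_order_prelim}, split the product over $n$ into the two ranges governed by Theorem~\ref{thm:l_f_z2}, use the bounds $\floor{i/2}+1 \le \log_2\abs{L_i^{(0)}F_{\mathbb{Z}/4}(\mathbb{Z}/2)} \le \floor{i/2}+2$ for the constant range, compute the stable range exactly, and then absorb the floor/ceiling terms into closed-form quadratic bounds. Your observation that equality holds in the lower bound for $i$ odd and in the upper bound for $i$ even is a nice sharpening that the paper glosses over with the remark about ``slightly loosening the bounds.''
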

\begin{proof}
   We work with the base 2 logarithms of the orders of the groups to make the
   notation easier. Recall Lemma~\ref{lem:thr_phi2_c2_order_prelim}, which says that
   \[\log_2 \, \abs{\pi_i((\mathrm{THR}(\mathbb{Z}/4)^{\phi
         \mathbb{Z}/2})^{C_2})} = \left\lfloor(i+1)^2/4\right\rfloor
        + \sum_{0 \le n \le
          i}\log_2\,\abs{L_i^{(n)}F_{\mathbb{Z}/4}(\mathbb{Z}/2)} \text{.}\]
  We apply the results of Theorem~\ref{thm:l_f_z2} to analyse the sum
  $\sum_{0 \le n \le i} \log_2\,\big|L_i^{(n)}F_{\mathbb{Z}/4}(\mathbb{Z}/2)\big|$.
  For the range $0 \le n \le \floor{i/2}$ we know %
  $L_i^{(n)}F_{\mathbb{Z}/4}(\mathbb{Z}/2)$ takes a constant value; we have
  \begin{align*}\sum_{0 \le n \le
      \floor{i/2}} \log_2\,\abs{L_{i}^{(n)}F_{\mathbb{Z}/4}(\mathbb{Z}/2)}
                                                     &= (\floor{i/2}+1)\log_2\,\abs{L_{i}^{(0)}F_{\mathbb{Z}/4}(\mathbb{Z}/2)}
  \end{align*}
  where $\log_2\,\abs{L_{i}^{(0)}F_{\mathbb{Z}/4}(\mathbb{Z}/2)} \in \{\floor{i/2}+1, \floor{i/2}+2\}$, giving lower and upper bounds for this
  term. For the range $\floor{i/2} < n \le i$ we have
  \begin{align*}\sum_{\floor{i/2} < n \le
      i} \log_2\,\abs{L_{i}^{(n)}F_{\mathbb{Z}/4}(\mathbb{Z}/2)} &= \sum_{\floor{i/2} < n \le i} (i-n+1)\\
                                                     &= (\ceil{i/2})(\ceil{i/2}+1)/2 \text{.}
  \end{align*}
   Combining
  these calculations and for convenience eliminating the floors and ceilings (at the cost of
  very slightly loosening the bounds) gives the result.
\end{proof}

Finally we have enough ingredients to use Lemma~\ref{lem:size_est} to get
asymptotic bounds on the order of the homotopy groups of
$\mathrm{TCR}(\mathbb{Z}/4)^{\phi \mathbb{Z}/2}$. The following appeared as
Theorem~\ref{thm:intro_asymptotics} in the introduction.

\begin{theorem}\label{thm:asymptotics}
  The groups
  \[\pi_i\left(\mathrm{TCR}(\mathbb{Z}/4)^{\phi \mathbb{Z}/2}\right)\]
  grow rapidly: the logarithm of the order asymptotically grows quadratically in $i$.

  More precisely, for $i \ge 0$ we have upper and lower bounds
  \[2^{(i-1)(i+1)/8} \le \abs{\pi_i\left(\mathrm{TCR}(\mathbb{Z}/4)^{\phi
          \mathbb{Z}/2}\right)} \le 2^{(i+2)(9i+20)/8} \text{.}\]
\end{theorem}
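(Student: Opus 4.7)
The plan is to apply the size estimate of Lemma~\ref{lem:size_est} to the long exact sequence in homotopy groups coming from the fibre sequence
\[\mathrm{TCR}(\mathbb{Z}/4)^{\phi \mathbb{Z}/2} \to (\mathrm{THR}(\mathbb{Z}/4)^{\phi \mathbb{Z}/2})^{C_2} \xrightarrow{r-f} \mathrm{THR}(\mathbb{Z}/4)^{\phi \mathbb{Z}/2}\text{,}\]
taking the five-term exact sequence
\[\pi_{i+1}^{C_2}(\mathrm{THR}) \xrightarrow{(r-f)_{i+1}} \pi_{i+1}(\mathrm{THR}) \to \pi_i(\mathrm{TCR}) \to \pi_i^{C_2}(\mathrm{THR}) \xrightarrow{(r-f)_{i}} \pi_i(\mathrm{THR})\]
(abbreviating $\mathrm{THR}(\mathbb{Z}/4)^{\phi \mathbb{Z}/2}$ as $\mathrm{THR}$). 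All of the key numerical input is already available: Proposition~\ref{prop:thr_phi2} gives $\abs{\pi_j(\mathrm{THR})} = 2^{(j+1)(j+2)/2}$ exactly, and Theorem~\ref{thm:thr_phi2_c2_order} gives the two-sided bounds $2^{(j+1)(5j+7)/8} \le \abs{\pi_j^{C_2}(\mathrm{THR})} \le 2^{(j+2)(5j+8)/8}$.

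For the upper bound, Lemma~\ref{lem:size_est} gives
\[\abs{\pi_i(\mathrm{TCR})} \le \abs{\pi_{i+1}(\mathrm{THR})} \cdot \abs{\pi_i^{C_2}(\mathrm{THR})} \le 2^{(i+2)(i+3)/2} \cdot 2^{(i+2)(5i+8)/8}\text{,}\]
and the arithmetic $\tfrac{4(i+3) + (5i+8)}{8} = \tfrac{9i+20}{8}$ gives precisely $2^{(i+2)(9i+20)/8}$.

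For the lower bound, Lemma~\ref{lem:size_est} yields
\[\abs{\pi_i(\mathrm{TCR})} \ge \max\!\left(1, \frac{\abs{\pi_{i+1}(\mathrm{THR})}}{\abs{\pi_{i+1}^{C_2}(\mathrm{THR})}}\right) \cdot \max\!\left(1, \frac{\abs{\pi_i^{C_2}(\mathrm{THR})}}{\abs{\pi_i(\mathrm{THR})}}\right)\text{.}\]
The first ratio is bounded above by $1$ (a routine check: $\tfrac{4(i+3)-(5(i+1)+7)}{8}$ is non-positive), so it contributes only the trivial factor. The second ratio satisfies $\abs{\pi_i^{C_2}(\mathrm{THR})}/\abs{\pi_i(\mathrm{THR})} \ge 2^{(i+1)(5i+7)/8}/2^{(i+1)(i+2)/2} = 2^{(i+1)(i-1)/8}$, which gives the claimed bound $2^{(i-1)(i+1)/8}$.

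There is no significant obstacle remaining; the analytical work has all been done in establishing Theorem~\ref{thm:thr_phi2_c2_order} (which in turn relied on the near-complete calculation of $L_i^{(n)}F_{\mathbb{Z}/4}(\mathbb{Z}/2)$ in Theorem~\ref{thm:l_f_z2}). The only thing to check carefully is the sign and direction of the exponent arithmetic, in particular verifying that the ratio $\abs{\pi_{i+1}(\mathrm{THR})}/\abs{\pi_{i+1}^{C_2}(\mathrm{THR})}$ really does fall below $1$ (so that it drops out of the lower bound via the $\max$ with $1$) while the ratio in the other direction gives a genuine quadratic lower bound. Both claims follow from comparing the exponents $(i+1)(i+2)/2$, $(i+1)(5i+7)/8$, and their shifts by $1$; the asymptotic statement about quadratic growth of $\log_2 \abs{\pi_i(\mathrm{TCR})}$ is then immediate.
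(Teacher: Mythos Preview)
Your proof is correct and follows the same approach as the paper: both apply Lemma~\ref{lem:size_est} to the long exact sequence of the fibre sequence and feed in the exact count from Proposition~\ref{prop:thr_phi2} together with the two-sided bounds from Theorem~\ref{thm:thr_phi2_c2_order}. Your version is slightly more explicit in handling both $\max$ factors in the lower bound and noting that the first one drops out, whereas the paper silently uses only the second factor; the arithmetic and conclusion are identical.
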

\begin{proof}
  Apply Lemma~\ref{lem:size_est} to the long exact sequence for
  $\mathrm{TCR}(\mathbb{Z}/4)^{\phi \mathbb{Z}/2}$, giving
\begin{equation*}\begin{aligned} \frac{\abs{\pi_i((\mathrm{THR}(\mathbb{Z}/4)^{\phi
        \mathbb{Z}/2})^{C_2})}}{\abs{\pi_i(\mathrm{THR}(\mathbb{Z}/4)^{\phi
        \mathbb{Z}/2})}} &\le \abs{\pi_i(\mathrm{TCR}(\mathbb{Z}/4)^{\phi \mathbb{Z}/2})}\\
                         & \le \abs{\pi_i((\mathrm{THR}(\mathbb{Z}/4)^{\phi
        \mathbb{Z}/2})^{C_2})} \cdot \abs{\pi_{i+1}(\mathrm{THR}(\mathbb{Z}/4)^{\phi
        \mathbb{Z}/2})} \mathrm{.}
\end{aligned}\end{equation*}

 Now by Proposition~\ref{prop:thr_phi2} computing
 $\abs{\pi_i(\mathrm{THR}(\mathbb{Z}/4)^{\phi \mathbb{Z}/2})}$ and
 Theorem~\ref{thm:thr_phi2_c2_order} proving bounds on
 $\abs{\pi_i((\mathrm{THR}(\mathbb{Z}/4)^{\phi \mathbb{Z}/2})^{C_2})}$, we find
  \begin{align*}
    \log_2\left( \abs{\pi_i(\mathrm{TCR}(\mathbb{Z}/4)^{\phi \mathbb{Z}/2})} \right) %
    &\ge (i+1)(5i+7)/8 - (i+1)(i+2)/2\\
    &= (i-1)(i+1)/8\end{align*}
  and
  \begin{align*}
    \log_2\left( \abs{\pi_i(\mathrm{TCR}(\mathbb{Z}/4)^{\phi \mathbb{Z}/2})} \right) %
    &\le (i+2)(5i+8)/8 + (i+2)(i+3)/2\\
    &= (i+2)(9i+20)/8\qedhere\end{align*}
\end{proof}

\newpage %
\phantomsection
\addcontentsline{toc}{section}{References}
\printbibliography

\end{document}